\newcommand{\Z}{\mathbb{Z}} 
\newcommand{\N}{\mathbb{N}}
\newcommand{\Q}{\mathbb{Q}}
\newcommand{\R}{\mathbb{R}}
\newcommand{\A}{\mathcal{A}}
\newcommand{\Function}[5]{\begin{array}{cccc} #1 : & #2 & \rightarrow & #3 \\ & #4 & \mapsto & #5 \end{array}}
\newcommand\beq[1]{\begin{equation}\label{#1}}
\newcommand{\eeq}{\end{equation}}
\newcommand{\beqa}{ \begin{align*} }
\newcommand{\eeqa}{ \end{align*} }
\newcommand{\beqano}{ \begin{eqnarray*} }
\newcommand{\eeqano}{ \end{eqnarray*} }
\newtheorem{theorem}{Theorem}[section]
\newtheorem{definition}{Definition}[section]
\newtheorem{proposition}{Proposition}[section]
\newtheorem{lemma}{Lemma}[section]
\newtheorem{sublemma}{Sublemma}[section]
\newtheorem{remark}{Remark}[section]
\newtheorem{notationalremark}{Notations}[section]
\newtheorem{corollary}{Corollary}[section]
\newtheorem{assumption}{Assumption}[section]
\newtheorem{claim}{Claim}[section]
\newcommand\thm[1]{ \begin{theorem}\label{#1}}
\newcommand\thmtwo[2]{ \begin{theorem}[#1]\label{#2}}
\newcommand\ethm{ \end{theorem} }
\newcommand\dfn[1]{ \begin{definition}\label{#1} \rm}
\newcommand\dfntwo[2]{ \begin{definition}[#1]\label{#2} \rm}
\newcommand\edfn{ \end{definition} }
\newcommand\pro[1]{ \begin{proposition}\label{#1}}
\newcommand\protwo[2]{ \begin{proposition}[#1]\label{#2}}
\newcommand\epro{ \end{proposition} }
\newcommand\lem[1]{ \begin{lemma}\label{#1}}
\newcommand\lemtwo[2]{ \begin{lemma}[#1]\label{#2}}
\newcommand\elem{ \end{lemma} }
\newcommand\sublem[1]{ \begin{sublemma}\label{#1}}
\newcommand\sublemtwo[2]{ \begin{sublemma}[#1]\label{#2}}
\newcommand\esublem{ \end{sublemma} }
\newcommand\rem[1]{ \begin{remark}\label{#1} \rm}
\newcommand\erem{ \end{remark} }
\newcommand\notrem[1]{ \begin{notationalremark}\label{#1} \rm}
\newcommand\enotrem{ \end{notationalremark} }
\newcommand\cor[1]{ \begin{corollary}\label{#1}}
\newcommand\cortwo[2]{ \begin{corollary}[#1]\label{#2}}
\newcommand\ecor{ \end{corollary} }
\newcommand\asmp[1]{ \begin{assumption}\label{#1}}
\newcommand\asmptwo[2]{ \begin{assumption}[#1]\label{#2}}
\newcommand\easmp{ \end{assumption} }
\newcommand\clm[1]{ \begin{claim}\label{#1}}
\newcommand\eclm{ \end{claim} }
\newcommand\su[1]{ \frac{1}{ {#1}} }
\newcommand{\e }{ {\epsilon} }
\newcommand{\Leb}{\textup{Leb}_\R}
\newcommand{\LebIR}{\textup{Leb}_{[0, 1) \times \R}}
\newcommand{\RVExt}{\mathcal{R}}
\newcommand{\RVKZ}{\mathcal{Z}}
\begin{document}
\date{}
\title[Ergodicity of skew products over typical IETs]{Ergodicity of skew products over typical IETs}
\author
{ F. Argentieri, P. Berk, F. Trujillo}
\maketitle

\begin{abstract}
We prove the ergodicity of a class of infinite measure-preserving systems of the form
 \[
 \Function{T_f}{[0, 1) \times \R}{[0, 1) \times \R}{(x, t)}{(T(x), t+f(x))},
 \]
where $T$ is an interval exchange transformation and $f$ is a piecewise constant function with a finite number of discontinuities. We show that such a system is ergodic with respect to $\textup{Leb}_{[0, 1)\times \R}$ for a typical choice of parameters of $T$ and $f$. This generalizes a recent result by Chaika and Robertson \cite{chaika_ergodicity_2019} concerning an exceptional class of interval exchange transformations.
\end{abstract}

\section{Introduction}

\emph{Interval exchange transformations}, or IETs, for short (see Section \ref{sc:IETs} for a precise definition), appear naturally as the Poincaré return maps to a transversal section for many classical surface flows, such as translation flows or locally Hamiltonian flows. As such, during the last century, they attracted a lot of attention from a large number of mathematicians. In this article, we study infinite measure-preserving systems called \emph{skew products}, built as extensions of these transformations. We focus on the ergodic properties of the aforementioned systems.

To be more precise, given $m \geq 1$ and $M > 0$, define 
\[ P_{m, M} = \left.\left\{ (p, q) \in \mathbb{R}^{m + 1} \times \mathbb{R}^{m + 1} \,\right|\, \langle p, q \rangle = 0;\, |p|_1 = 1;\, |q|_\infty \leq M\right\}.\] 

We denote by $C_{m, M}$ the space of mean-zero step functions $f:[0, 1) \to \R$ of the form
\[f= q_1\chi_{[0,p_1)}+...+q_{m + 1}\chi_{[p_1+\dots +p_{m+1},1)},\]
for some $(p, q) \in P_{m, M}$. For $f$ as above, we call 
\begin{equation}
\label{eq:jumps}
\sigma_i(f):= q_{i + 1} - q_i
\end{equation}
the \emph{$i$-th jump} of $f$, for any $1 \leq i \leq m$, and denote the set of all jumps of $f$ by $\sigma(f)$. 

We endow $C_{m, M}$ with the Lebesgue measure and the $l_{\infty}$ metric inherited from $P_{m, M}$. Given $f\in C_{m, M}$ and an IET $T$ on $[0, 1)$, we associate the \emph{skew product} $T_f$ given by
\begin{equation}
\label{eq:skew_product}
\Function{T_f}{[0, 1) \times \R}{[0, 1) \times \R}{(x, t)}{(T(x), t+f(x))}.
\end{equation}
We refer to $T$ as the \emph{base} of $T_f$ and to $f$ as the \emph{cocycle} associated with $T_f$. The iterates of $T_f$ are given by
\[T_f^n(x, t) = (T^n(x), t + S_nf(x)),\]
for any $n \in \Z$, where $S_nf$ denotes the associated $n$-th Birkhoff sum. For every $n\in\Z$, the $n$-th \emph{Birkhoff sum} $S_n f: [0, 1) \to \R$ of the cocycle $f$ with respect to the base transformation $T$ is given by 
\begin{equation}
	\label{eq:Birkhoff_sums}
	S_n f(x):=\begin{cases}
		\sum_{i=0}^{n-1}f(T^{i}(x))& \text{ if }n\ge 1,\\
		0&\text{ if }n=0,\\
		-\sum_{i=-n}^{-1}f(T^{i}(x))&\text{ if }n\le 1,
	\end{cases}
\end{equation}
where, for the sake of simplicity, we omit the dependence on $T$ in the notation. 

Skew products of the form \eqref{eq:skew_product} may be seen as first return maps of certain extensions of non-minimal locally Hamiltonian flows (skew product flows with respect to an appropriate cocycle) for which some of the saddle connections are loops bounding a basin of periodic orbits. By using methods from \cite{marmi_cohomological_2005} and \cite{conze_cocycles_2011}, the analysis of ergodic properties on minimal components of such systems boils down to the analysis of skew products studied in this article. 

\subsection{Statement of the main results}

An IET can be fully described through two parameters: a \emph{permutation}, describing the way the intervals are exchanged, and a \emph{length vector}, encoding the lengths of the exchanged intervals. We parametrize the space of (irreducible) IETs of $d \ge 2$ intervals on $[0, 1)$ by indexing the exchanged intervals using an alphabet $\mathcal A$ (with $d$ elements) and considering the set $S_0\times \Lambda^{\mathcal A}$, where $S_0$ denotes the set of \emph{irreducible} permutations (i.e., permutations that do not divide an interval into two or more subintervals which are being exchanged independently) while $\Lambda^{\mathcal A}:= \big\{ \lambda \in \R^\A_+ \,\big|\, |\lambda|_1 = 1\big\}$ stands for the unit simplex of real positive vectors indexed by $\mathcal A$. 

We endow the space $S_0\times \Lambda^{\mathcal A}$ with the product of the counting and the Lebesgue measures. Throughout this work, when using the terms `typical' or `almost every' IET, we mean a.e. IET with respect to this product measure. We refer to Section \ref{sc:notations} for additional background and details on IETs. 


The following is the main result of our article.
\begin{theorem}
\label{thm:main_thm}
Let $m \geq 2$ and $M > 0$. For a.e. irreducible IET $T$ of $d \geq 2$ intervals on $[0, 1)$ the associated cocycle $T_f$ is ergodic, with respect to $\textup{Leb}_{[0, 1)\times \R}$, for a.e. $f \in C_{m, M}$.
\end{theorem}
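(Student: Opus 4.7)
The plan is to apply Schmidt's criterion for ergodicity of cocycles over ergodic base transformations: since a.e.\ IET is uniquely ergodic by Masur--Veech, $T_f$ is ergodic with respect to $\textup{Leb}_{[0, 1) \times \R}$ if and only if the group $E(f) \subseteq \R$ of essential values of $f$ over $T$ equals $\R$. As $E(f)$ is closed in $\R$, it suffices to rule out $E(f)$ being trivial or cyclic, i.e., to exhibit essential values accumulating at $0$.

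The principal machinery is Rauzy--Veech renormalization. For a typical IET $T$ one has, at each induction level $n$, a dynamical partition of $[0,1)$ into Rokhlin towers over induced subintervals $I^{(n)}_\alpha$, and Veech's distortion estimates give comparable heights $h^{(n)}_\alpha$ and widths $|I^{(n)}_\alpha|$. The special Birkhoff sums $S_{h^{(n)}_\alpha} f$ on the bases of these towers are governed by iterates of the Kontsevich--Zorich cocycle acting on the jump data $(q_\alpha)_{\alpha \in \A}$, whose Lyapunov spectrum is simple and nontrivial for a.e.\ $T$ by Forni and Avila--Viana. The resulting Oseledets decomposition splits the space of step cocycles into directions of exponential contraction (stable) and expansion (unstable) of Birkhoff sums.

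The heart of the argument would be the following construction: given a measurable set $A$ with $\textup{Leb}(A) > 0$ and a target $a \in \R$, produce an induction level $n$ and a Rokhlin tower level meeting $A$ in positive measure on which $S_n f$ approximates $a$. Projecting the jump vector onto the stable Oseledets subspace, provided this projection is nonzero, the contracting dynamics combine across many tower levels to yield Birkhoff sums densely filling any bounded interval on sets of positive measure, hence essential values arbitrarily close to $a$. Under the hypothesis $m \geq 2$, the cocycles $f \in C_{m, M}$ whose jump vector lies entirely in the unstable subspace of the KZ cocycle form a linear subvariety of positive codimension in $P_{m, M}$ and so carry zero Lebesgue measure; a Fubini argument over $(T, f)$ then concludes.

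The hard part is executing this last construction rigorously: showing that on a positive-measure subset of a Rokhlin tower level, the Birkhoff sums densely approximate any prescribed real value. This requires pairing the geometric recurrence of orbits to Rokhlin towers with the arithmetic distribution of jump contributions along those orbits, and carefully handling borderline directions where the KZ Oseledets splitting is poorly separated. Chaika and Robertson's argument for their exceptional class relies on explicit combinatorial control of the orbit structure; replacing this with Masur--Veech typicality of $T$ and genericity of $f$ is the essential novelty, trading rigid combinatorics for generic Oseledets behavior through a final measure-theoretic slicing.
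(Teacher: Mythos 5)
Your proposal has a genuine gap at its core, and the gap is precisely the phenomenon this paper is designed to circumvent. You assert that the special Birkhoff sums of $f$ along Rauzy--Veech towers are governed by the Kontsevich--Zorich cocycle acting on the jump data of $f$, and that the ``bad'' cocycles are those whose jump vector lies entirely in the unstable subspace, a positive-codimension condition. Both halves are problematic. First, for $f \in C_{m,M}$ the discontinuities of $f$ generically do \emph{not} coincide with those of $T$, so $f$ is not constant on the exchanged intervals and its special Birkhoff sums are not obtained by iterating the KZ cocycle on a vector in $\R^{\mathcal{A}}$; all one gets is the deviation bound $|S_nf| \lesssim n^{\theta_2/\theta_1 + \varsigma}$ of Zorich (Theorem \ref{thm:deviations}). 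Second, even in the adapted setting the logic is reversed: the obstruction is not a vanishing stable projection but a \emph{nonzero} projection onto the expanding Oseledets directions associated with $\theta_2, \dots, \theta_g > 0$, and for genus $g \geq 2$ this projection is nonzero for a generic mean-zero vector. Hence the special Birkhoff sums of a typical cocycle grow polynomially along the towers, your contraction argument does not produce essential values accumulating at $0$, and the set you would need to excise has full measure rather than measure zero. The non-ergodic examples of Fr\k{a}czek--Ulcigrai live exactly in this regime, so the argument cannot be repaired by genericity alone.

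The paper's actual route differs in two essential ways. It deliberately avoids Schmidt's essential-value criterion: assuming non-ergodicity for a positive-measure set of $f$, it constructs $T_f$-invariant measures $\mu_f$ neither proportional nor singular to Lebesgue, disintegrates them over the fibers $\{x\}\times\R$, and derives a contradiction by showing that some fiber measure is invariant under translation by each jump $\sigma_i(f)$, hence under a dense subgroup of $\R$. That translation invariance is obtained by ``nudging'' the discontinuities of $f$ so that the Birkhoff sums at two nearby base points of a Rokhlin tower differ by exactly $\sigma_i(f)$, combined with a Lusin-type continuity argument for $(f,x)\mapsto \mu_{f,x}$. Moreover, the boundedness of Birkhoff sums it needs is not along whole towers but only at carefully selected return times to a given positive-measure set: this is Theorem \ref{thm:good_returns}, proved via a pigeonhole argument (Proposition \ref{prop:recurrence}) that plays the subexponential deviation $n^{\theta_2/\theta_1+\varsigma}$ against the exponential growth of balanced tower heights furnished by the spectral gap $\theta_2 < \theta_1$. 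Any essential-value proof would still need an analogue of this combination of good returns with nudging; the Oseledets splitting alone does not supply it.
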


\begin{remark}
The set of IETs considered in Theorem \ref{thm:main_thm} are the ones satisfying the conclusions of Theorem \ref{thm:good_returns}. The set of cocycles for which the result holds depends on $T$.
\end{remark}

{
Before Chaika and Robertson's work \cite{chaika_ergodicity_2019} concerning the ergodicity of skew-products of the form \eqref{eq:skew_product} for typical cocycles over an exceptional class of IETs known as \emph{linearly recurrent} (we refer to \cite{chaika_ergodicity_2019} for a precise definition), most of the existing results concerned rotations as the base in the definition of the skew product (notice that rotations can be naturally seen as IETs of two intervals). The ergodicity of skew products with rotations in their base was shown for many classes of cocycles, see, for example, the works of Oren \cite{oren_ergodicity_1983}, Hellekalek and Larcher \cite{hellekalek_ergodicity_1986}, Pask \cite{pask_skew_1990}, as well as Conze and Pi\k{e}kniewska \cite{conze_multiple_2014}. However, the case where the base transformation is an IET with more than two intervals has been much less explored. Conze and Fr\k{a}czek \cite{conze_cocycles_2011} proved the ergodicity of skew products with self-similar IETs in the base (which form a countable set of IETs) and piecewise constant functions that are constant over exchanged intervals. Surprisingly, Fr\k{a}czek and Ulcigrai \cite{fraczek_non-ergodic_2014}, during their study of the Ehrenfest wind-tree model, found examples of skew products over IETs with piecewise constant cocycles, again, constant over exchanged intervals, which are not ergodic.

In this work, we extend Chaika and Robertson's result in \cite{chaika_ergodicity_2019} to a full-measure class of IETs (which contains the class of linearly recurrent IETs considered in \cite{chaika_ergodicity_2019}, see Remark \ref{rmk:linearly_recurrent}).

{ Although our arguments resemble those in \cite{chaika_ergodicity_2019}, the different approach that we exhibit allows us to obtain a much more general result. As in \cite{chaika_ergodicity_2019}, we proceed with a proof by contradiction and use crucially the fact that, generically, the discontinuities of the cocycle do not coincide with the discontinuities of the IET and the associated jumps generate a dense subgroup in $\R$. However, we derive a contradiction in a different manner that, in contrast to \cite{chaika_ergodicity_2019}, does not require the direct use of \emph{essential value criteria} (see, e.g., \cite{schmidt_cocycles_1977}) but instead makes ingenious use of some of the properties usually needed for the application of essential value criteria, namely, of the boundedness of Birkhoff sums of the cocycle (see \eqref{eq:Birkhoff_sums}) along return times, to a given set, of the base transformation. 

This approach allows us to provide sufficient conditions on the pairs $(T, f)$ in terms of recurrence properties of $T$ and the Birkhoff sums $S_n f$ for Theorem \ref{thm:main_thm} to hold (see Properties \eqref{cond:return}-\eqref{cond:towers} in Theorem \ref{thm:good_returns}). We then check, in Theorem \ref{thm:good_returns}, that for a.e. irreducible IET there exists a full-measure set of cocycles for which these conditions are satisfied.

A second major difference with respect to \cite{chaika_ergodicity_2019} concerns the methods to control the Birkhoff sums of the cocycle. As mentioned above, bounds on the Birkhoff sums of the cocycle will be essential in the proof and, similarly to the argument in \cite{chaika_ergodicity_2019}, we want these bounds to hold along an \emph{appropriate} sequence of return times (see Properties \eqref{cond:dense} and \eqref{cond:towers} in Theorem \ref{thm:good_returns}) as this will allow us to have similar estimates for a sufficiently large set of perturbations of the initial cocycle (this last fact plays an important role in the proof of Theorem \ref{thm:main_thm}). This kind of control is obtained in \cite{chaika_ergodicity_2019} by combining a quantitative version (proved by the authors in \cite{chaika_ergodicity_2019}) of Atkinson's recurrence theorem \cite{atkinson_recurrence_1976} with properties of the exceptional class of IETs (linearly recurrent) considered by them. We, on the other hand, use the properties of the \emph{Rauzy-Veech renormalization} and the \textit{Kontsevich-Zorich cocycle} (see Section \ref{subs: induction} for precise definitions). More precisely, we use the ergodicity of an accelerated version of the Rauzy-Veech renormalization, known as \emph{Zorich's acceleration} and the fact that the first Lyapunov exponent of the Kontsevich-Zorich cocycle over this acceleration is strictly bigger than the second one, that is, it exhibits a spectral gap (see Section \ref{sc:KZ}). This will imply that certain Rokhlin towers obtained using this algorithm (see Section \ref{sc:RV}) can be balanced in height and width while their heights grow much faster than the Birkhoff sums of the cocycle along the full tower (see Proposition \ref{prop:balanced_times}). As we shall see, this forces many of the Birkhoff sums along shorter subtowers to be bounded (see Proposition \ref{prop:recurrence}). Using this approach, we are able to show the following.}
}

\begin{theorem}
\label{thm:good_returns}
For a.e. irreducible IET $T$ on $d \geq 2$ intervals {there exist $C, \sigma > 0$} such that 
the following holds.

Given $m \geq 2$ and $M > 0$ there exists a full-measure set $\mathcal{F} \subseteq C_{m, M}$ such that, for any $f \in C_{m, M}$, for any positive measure set $E \subseteq [0, 1)$, any $D > mM$ and any $N > 0$, there exist $x \in E$ and $n > N$ satisfying:
\begin{enumerate}
\item \label{cond:return} $x,$ $T^{n}(x) \in E$. 
\item \label{cond:bounded} $\big| S_{n}f(x)\big| < D.$
\item \label{cond:dense} The set $\{T^i(x)\}_{i = 0}^{n- 1}$ is $\tfrac{C}{n}$-dense in $[0, 1)$.
\item \label{cond:towers} {Either $\big[x - \tfrac{\sigma}{n}, x\big]$ or $\big[x, x + \tfrac{\sigma}{n}\big]$ is a continuity interval for $T^{n}$.}
\end{enumerate}
\end{theorem}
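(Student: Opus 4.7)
The plan, as announced in the introduction, is to combine Rauzy--Veech induction with the spectral gap of the Kontsevich--Zorich cocycle to produce a sequence of renormalization times at which the associated Rokhlin towers are simultaneously balanced in heights and widths, and along which the Birkhoff sums of a generic cocycle $f$ remain small. Since the Zorich acceleration is ergodic and the first Lyapunov exponent of the Kontsevich--Zorich cocycle is strictly larger than the second, for a.e. IET $T$ one obtains, via Birkhoff's ergodic theorem applied to the log-norms of the cocycle, an infinite sequence of \emph{balanced times} $\{n_k\}$ at which all tower heights $h_\alpha^{(n_k)}$ are mutually comparable within a universal constant, all tower widths $\lambda_\alpha^{(n_k)}$ are mutually comparable, and $h_\alpha^{(n_k)}\lambda_\alpha^{(n_k)} \asymp 1/d$. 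Setting $h_k := \max_\alpha h_\alpha^{(n_k)}$, one extracts constants $C, \sigma > 0$ (depending only on $d$ and the balancing constant) such that each tower base is a continuity interval of length at least $\sigma/h_k$ for the corresponding iterate of $T$, and every orbit of length $h_k$ starting in a tower base is $C/h_k$-dense in $[0, 1)$. This already yields Properties \eqref{cond:dense} and \eqref{cond:towers} for any $n$ equal to such a tower height.

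Next I define the full-measure set $\mathcal{F} \subseteq C_{m, M}$ producing Property \eqref{cond:bounded}. The spectral gap forces the Kontsevich--Zorich cocycle, restricted to the invariant complement of its top Oseledets direction, to have operator norm bounded by $h_k^{1 - \delta}$ for some $\delta > 0$ along the balanced subsequence. Using the standard identification of the Birkhoff sums of a mean-zero piecewise constant cocycle over an IET with the action of this cocycle on the underlying ``jump vector'' $\sigma(f)$, one obtains $\|S_{h_\alpha^{(n_k)}} f\|_\infty \leq C_f\, h_k^{1 - \delta}$ with $C_f$ depending linearly on $\sigma(f)$. A Borel--Cantelli argument on the parameter space $P_{m, M}$, exploiting the absolute continuity and independence of the jump coordinates, then shows that for a full-measure set $\mathcal{F}$ of cocycles there exists, along a further subsequence of $\{n_k\}$, an arbitrarily small uniform bound $D_0$ for $|S_{h_\alpha^{(n_k)}} f|$ over the tower bases.

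To match a given positive-measure set $E$, I apply Fubini on the product $\{(x, j) : x \in I_\alpha^{(n_k)},\, 0 \leq j < h_\alpha^{(n_k)}\}$: for $n_k$ large enough, a positive fraction of such pairs satisfies $T^j(x) \in E$. Selecting in the same tower two such levels $j_1 < j_2$, with $j_1$ small and $j_2$ close to $h_\alpha^{(n_k)}$, and setting $y := T^{j_1}(x)$ and $n := j_2 - j_1$, one obtains $y, T^n(y) \in E$; moreover $|S_n f(y)| \leq D_0 + mM < D$ once $D_0$ is chosen smaller than $D - mM$ (possible by the previous step), the boundary term $mM$ absorbing the oscillation of $f$ on the telescoped ends of the tower (at most $m$ discontinuities of amplitude $\leq M$). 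Properties \eqref{cond:dense} and \eqref{cond:towers} for $n$ transfer from $y$ at height $h_\alpha^{(n_k)}$ up to adjusting $C, \sigma$ by a bounded factor.

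The main obstacle is the second step: upgrading the sub-exponential spectral-gap estimate on the Kontsevich--Zorich cocycle to a pointwise uniform bound on the Birkhoff sums along a full-measure set of cocycles. Sub-exponential growth does not by itself imply boundedness, so one needs a careful Borel--Cantelli estimate on $P_{m, M}$ relying on the independence and absolute continuity of the jump parameters. The hypothesis $D > mM$, which looks artificial at first, is natural from this viewpoint: it reserves room to absorb the at most $m$ jumps of $f$ of amplitude $\leq M$ that can be crossed on the telescoped ends of a tower when matching the orbit to the prescribed set $E$.
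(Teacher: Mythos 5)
Your first and third paragraphs follow the paper's scaffolding (balanced Rokhlin towers produced by the Zorich acceleration, properties \eqref{cond:dense} and \eqref{cond:towers} read off from tower heights, and the role of $mM$ as the oscillation of $S_if$ across a continuity interval of $T^n$), but the central mechanism you propose for property \eqref{cond:bounded} does not work. You claim that for a full-measure set of cocycles the Birkhoff sums over full towers, $|S_{h_\alpha^{(n_k)}}f|$ at the tower bases, admit an arbitrarily small uniform bound $D_0$ along a subsequence of balanced times. This is false for typical $f$ whenever $\theta_2>0$: the special Birkhoff sums of a mean-zero piecewise constant cocycle decompose along the Oseledets filtration of the Kontsevich--Zorich cocycle, the mean-zero condition only kills the component along the top exponent, and the component along the second exponent is nonzero for a.e.\ choice of jumps, so these sums grow like $h_{n_k}^{\theta_2/\theta_1}$ and are polynomially unbounded. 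No Borel--Cantelli argument on $P_{m,M}$ can rescue this: the set of $f$ with bounded special Birkhoff sums is (essentially) a positive-codimension linear condition, hence of measure zero, not of full measure. A second, related gap: even if the full-tower sums were bounded, your choice of two levels $j_1<j_2$ in the same tower with $T^{j_1}(x),T^{j_2}(x)\in E$ gives $S_{j_2-j_1}f(T^{j_1}x)=S_{j_2}f(x)-S_{j_1}f(x)$, and the \emph{intermediate} sums $S_{j}f(x)$ are not controlled by the sum over the whole tower.

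The paper avoids both problems by never asking for bounded Birkhoff sums over towers. Instead, Proposition \ref{prop:recurrence} argues by contradiction: if every return of $E$ to itself in a time window $[h_{n_p}/\eta,h_{n_p}]$ had $|S_nf|\ge D$, then a nested pigeonhole construction inside the $k$-th tower produces at least $\delta(\eta/4)^{k-P}$ levels returning to $E$ whose Birkhoff sums are pairwise $(D-mM)$-separated, forcing $\max_j|S_jf(x)|\gtrsim (D-mM)(\eta/4)^{k}$. This exponential lower bound (base $\eta/4$) contradicts Zorich's deviation estimate $|S_jf|\le C_0\,h_{n_k}^{\theta_2/\theta_1+\varsigma}$ \emph{only because} the selected return times also satisfy the controlled-growth property $\limsup_k h_{n_k}^{1/k}\le C\eta^{1+\epsilon}$ with $(1+\epsilon)(\theta_2/\theta_1+\varsigma)<1-\varsigma$; securing that growth bound is the delicate part of Proposition \ref{prop:balanced_times} and is absent from your construction of balanced times. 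In short, the proof needs only sublinear (polynomial, exponent $<1$) growth of the Birkhoff sums, which the spectral gap delivers; boundedness is neither needed nor available, and your argument hinges on it.
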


\begin{remark}
{It follows from the proof of Theorem \ref{thm:good_returns} that the constants $C, \sigma > 0$ in the statement can be taken uniform for a full-measure set of irreducible IETs on $d$ intervals.} 
\end{remark}

Let us point out that, for any IET $T$ ergodic w.r.t. to $\textup{Leb}_{[0, 1)}$ and any $f \in C_{m, M}$, as a consequence of the recurrence properties of $T_f$, it is not difficult to find times $n$ where Conditions \eqref{cond:return} and \eqref{cond:bounded} are verified. Also, by exploiting the ergodicity of the (accelerated) Rauzy-Veech renormalization, it is not difficult to find, for a.e. $T$ and for some constants $C, \sigma > 0$ not depending on $T$, times $n$ where Conditions \eqref{cond:dense} and \eqref{cond:towers} hold. However, these times do not necessarily coincide. The importance of Theorem \ref{thm:good_returns} is that it guarantees that these times can be made to coincide infinitely many times under generic conditions on $T$ and $f$.

\begin{remark}
\label{rmk:linearly_recurrent}
The conclusions of Theorem \ref{thm:good_returns} are satisfied by the set of \emph{linearly recurrent} IETs introduced in \cite{chaika_ergodicity_2019} (as the last two conditions are trivially verified, for any $n \geq 1$, for some constants $C$, $\sigma$ depending only on $T$). In particular, Theorem \ref{thm:main_thm} also applies to this class of IETs.
\end{remark}


Theorem \ref{thm:good_returns} is proven in the last section of this work and relies heavily on recurrence properties of the accelerated Rauzy-Veech renormalization as well as in well-known results due to Zorich \cite{zorich_deviation_1997} concerning the deviation of certain ergodic averages over IETs (see Theorems \ref{thm:deviation_classical}, \ref{thm:deviations}).

Before describing more in detail the proof strategy of Theorem \ref{thm:main_thm}, let us point out that, as a direct corollary of this theorem, we obtain the following.

\begin{theorem}\label{thm:corollary}
Let $m \geq 2$, $M > 0$ and $d \geq 2$. For a.e. $(T,f)\in S_0\times \Lambda^{\mathcal A}\times C_{m,M}$ the associated skew product $T_f$ is ergodic with respect to $\textup{Leb}_{[0, 1)\times \R}$.

In particular, for a.e. $f\in C_{m,M}$ there exists a full measure set of IETs for which the associated skew product $T_f$ is ergodic with respect to $\textup{Leb}_{[0, 1)\times \R}$. 
\end{theorem}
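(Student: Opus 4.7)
The plan is to deduce this statement directly from Theorem \ref{thm:main_thm} by two applications of Fubini's theorem. The first step is to introduce the set
\[
\mathcal{E} = \big\{(T, f) \in S_0 \times \Lambda^{\A} \times C_{m, M} \,:\, T_f \text{ is ergodic w.r.t. } \LebIR\big\}
\]
and to verify that $\mathcal{E}$ is measurable with respect to the product of the counting measure on $S_0$ and the Lebesgue measures on $\Lambda^{\A}$ and $C_{m, M}$. Ergodicity of the infinite measure-preserving transformation $T_f$ can be expressed as a countable condition on Hopf-type ratio ergodic averages of indicator functions of a fixed countable family of bounded measurable subsets of $[0, 1) \times \R$; since the map $(T, f, x, t) \mapsto T_f(x, t)$ is jointly measurable and the associated Birkhoff sums inherit this joint measurability, the resulting countable intersection defining $\mathcal{E}$ is a measurable set.

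The second step is to invoke Theorem \ref{thm:main_thm}: it states precisely that for a.e. irreducible $T \in S_0 \times \Lambda^{\A}$, the slice
\[
\mathcal{E}_T := \{f \in C_{m, M} \,:\, (T, f) \in \mathcal{E}\}
\]
has full Lebesgue measure in $C_{m, M}$. By Fubini's theorem, this forces $\mathcal{E}$ itself to have full measure in the product space $S_0 \times \Lambda^{\A} \times C_{m, M}$, which is exactly the first assertion of the theorem.

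For the third step, I apply Fubini in the opposite direction to the full-measure set $\mathcal{E}$: this yields that for a.e. $f \in C_{m, M}$, the slice
\[
\mathcal{E}^f := \{T \in S_0 \times \Lambda^{\A} \,:\, (T, f) \in \mathcal{E}\}
\]
has full measure in $S_0 \times \Lambda^{\A}$, which is the second (``in particular'') assertion. The only genuinely nontrivial point is the measurability of $\mathcal{E}$, but this is routine given the continuous dependence of $T_f$ on $(T, f)$; once it is in place, the rest is a purely formal consequence of Fubini's theorem applied twice.
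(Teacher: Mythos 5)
Your skeleton---reduce everything to the measurability of the set $\mathcal{E}$ of ergodic pairs and then apply Fubini twice---is exactly the paper's; its proof opens by declaring that, in view of Theorem \ref{thm:main_thm}, ``it is enough to show that the set $Z$ \dots is measurable.'' Where you diverge is in \emph{how} measurability is established. The paper never tests ergodicity of the infinite measure-preserving system directly: it passes to the first return maps $\tilde T_{f,N}$ of $T_f$ to $[0,1)\times[-N,N]$ (well defined because, by Atkinson's theorem, $T_f$ is recurrent whenever $T$ is ergodic and $f$ has mean zero), notes that $T_f$ is ergodic iff every $\tilde T_{f,N}$ is, proves that $(T,f)\mapsto \tilde T_{f,N}$ is continuous into the group of automorphisms of a standard probability space (Lemma \ref{lem: contemb}), and then quotes the classical fact that the ergodic automorphisms of such a space form a measurable set. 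You instead propose to encode ergodicity of the infinite system directly via Hopf-type ratio averages over a countable family of sets.

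That route can be made rigorous, but as written it conceals the two points that carry all the weight. First, any characterization of ergodicity by a.e.\ convergence of ratio averages (Hopf/Chacon--Ornstein) presupposes that $T_f$ is \emph{conservative}; for an infinite measure-preserving map this is not automatic, and it is precisely where Atkinson's recurrence theorem must be invoked---your proposal never mentions it. Second, the assertion that ergodicity ``can be expressed as a countable condition on ratio averages of indicators of a fixed countable family'' is the entire nontrivial content of the theorem, and you state it rather than prove it: one must check that if the ratio limits equal the constants $\textup{Leb}(B_i)/\textup{Leb}(B_j)$ for all pairs in a generating family then the invariant $\sigma$-algebra is trivial, and that the resulting set of $(T,f)$ is measurable (joint measurability of the Birkhoff sums plus an inner Fubini over the base point). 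Since the measurability of $\mathcal{E}$ is the only substantive claim here, this is the one step that cannot be dismissed as routine. The double application of Fubini is fine and matches what the paper does implicitly.
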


\begin{proof}
In view of Theorem \ref{thm:main_thm}, it is enough to show that the set 
\[
Z:=\{(T,f)\in S_0\times\Lambda^{\mathcal A}\times C_{m,M} \mid T_f\text{ is ergodic}\}
\]
is measurable.

For any $N\in \N$, let $\tilde T_{f, N}:[0, 1)\times[-N,N] \rightarrow [0, 1)\times[-N,N]$ denote the first return map of $T_f$ to $[0, 1)\times [-N,N]$. Recall that, due to a classical result by Atkinson \cite{atkinson_recurrence_1976}, if $T$ is ergodic, then the associated skew product $T_f$ is \emph{recurrent} (see Appendix \ref{appendix} for a precise definition). In particular the first return transformation $\tilde T_{f, N}$ is well-defined {on a full-measure subset of $[0, 1)\times[-N,N]$.}

Note that $T_f$ is ergodic if and only if $\tilde T_{f, N}$ is ergodic for every $N\in\N$. Indeed, an induced map of an ergodic transformation is also ergodic. {On the other hand, if there was a $T_f$-invariant subset of positive measure whose complement has positive measure, then, for $N \in \N$ sufficiently large, the set $[0, 1) \times [-N,N]$ would contain a $\tilde T_{f, N}$-invariant subset of positive measure whose complement also has positive measure.} 

Thus,
\[
Z=\bigcap_{N=1}^{\infty} Z_N, \qquad \text{ where }Z_N:=\{(T,f)\in S_0\times\Lambda^{\mathcal A}\times C_{m,M} \mid \tilde T_{f, N}\text{ is ergodic}\}.
\]
It is not difficult to show that the assignment $(T, f) \mapsto \widetilde{T}^N_f$ is measurable (see Lemma \ref{lem: contemb} in Appendix \ref{appendix}). Due to the fact that the set of ergodic transformations of any standard probability space is measurable {(see \cite{halmos_lectures_1960})}, it follows that the set $Z_N$ is measurable for every $N\in\N$. Therefore, $Z$ is measurable, which concludes the proof of the theorem.
\end{proof}

\subsection{Scheme of the proof of Theorem \ref{thm:main_thm}}
\label{sc:scheme}
We suppose $T$ is an IET ergodic with respect to the Lebesgue measure and satisfying the conclusions of Theorem \ref{thm:good_returns}.


Suppose, for the sake of contradiction, that for a positive measure set of functions $f$ in $C_{m, M}$, the associated skew products $T_f$ are not ergodic. Call this set $W \subseteq C_{m, M}$. Since a typical function in $C_{m, M}$ has at least two rationally independent jumps (see \eqref{eq:jumps}), we can assume WLOG that for any $f \in W$ the group generated by $\sigma(f)$ is dense in $\R.$

For any $f $ in an appropriate subset $V \subseteq W$, we will \emph{define a $T_f$-invariant measure $\mu_f$}, depending measurably on $f$, that is not a multiple of $\LebIR$, nor singular to it. 

We then \emph{decompose the measure $\mu_f$ along the fibers $\{x\} \times \R$} as a family of measures on $\R$ which we denote $\{\mu_{f, x}\}_{x \in \R}$. Since $\mu_f$ is not a multiple of $\LebIR$, it is possible to show that for a.e. $x \in \R$, the measure $\mu_{f, x}$ is not a multiple of $\Leb$. So we may assume that the set $U$ of $(f, x) \in V \times [0, 1)$ for which $\mu_{f, x}$ is not a multiple of $\Leb$ has full measure.

We will derive a contradiction by showing the existence of $(f, x) \in U$ such that $\mu_{f, x}$ is a multiple of the Lebesgue measure. 

To show this, we first consider cutoffs $\mu_{f, x, D}$ of the measures $\mu_{f, x}$ to compact intervals of the form $[-D, D]$, and \emph{define an appropriate continuity set $B$ for the family of maps $(f, x) \mapsto \mu_{f, x, D}$}, for any $D \in \Q_{> 0}$. We then fix $(f, x) \in B$ and \emph{construct sequences $(f_{i,k}, x^\pm_k)$ in $B$}, converging to $(f, x)$, such that for any open interval $J$ such that $J, J + \sigma_i(f) \subseteq [-D, D]$, we have $$\mu_{f_{i,k}, x^+_k, D}(J) - \mu_{f_{i,k}, x^-_k, D}(J + \sigma_i(f)) \to 0.$$
Using the continuity of these measures on the set $B$, we conclude that $\mu_{f, x}$ is invariant by the translation by $\sigma_i(f)$. Since the set of jumps of $f$ generates a dense subgroup in $\R$, this implies that $\mu_{f, x}$ is a multiple of the Lebesgue measure, which contradicts our assumptions on the set $U$. 

\begin{remark}
The proof of the existence of the measures $\mu_{f}$ described above follows many well-known techniques but is rather technical. For this reason, we provide a detailed proof of this in Appendix \ref{appendix}. 
\end{remark}

\subsection{Outline of the article} First, in Section \ref{sc:notations}, we introduce the notations used throughout the article as well as the main tool in the proof of Theorem \ref{thm:good_returns}, namely, Rauzy-Veech renormalization. 

In Section \ref{sc:main_proof}, assuming Theorem \ref{thm:good_returns}, which describes the set of pairs of IETs and cocycles to be considered, we follow the scheme described in Section \ref{sc:scheme} to prove our main result, namely, Theorem \ref{thm:main_thm}.

In Section \ref{sc:diophantine_condition}, we prove Theorem \ref{thm:good_returns}. Finally, in Appendix \ref{appendix}, we provide some of the technical details required in the proof of Theorem \ref{thm:main_thm}.

\subsection*{Acknowledgments:} We are very grateful to Corinna Ulcigrai and Jon Chaika for fruitful discussions. The first author was supported by the SNSF Grant 188535. The second and third authors acknowledge the support of the Swiss National Science Foundation through Grant 200021\_188617/1. The second author was supported by the NCN Grant 2022/45/B/ST1/00179. The third author was partially supported by the UZH Postdoc Grant, grant no. FK-23-133, the Spanish State Research Agency, through the Severo Ochoa and María de Maeztu Program for Centers and Units of Excellence in R\&D (CEX2020-001084-M) and by the {\it European Research Council} (ERC) under the European Union's Horizon 2020 research and innovation programme Grant 101154283.

\section{Notations}
\label{sc:notations}

\subsection{Interval exchange transformations}
\label{sc:IETs}
We now present the main notions and facts used in this article. Let $I=[0,|I|)$ be an interval of length $|I|$, where $|\cdot|$ denotes the Lebesgue measure of a set. Let also $\mathcal A$ be an alphabet of $d\ge 2$ elements. We say that $T$ is an \emph{interval exchange transformation (IET)} of $d$ elements if there exists a finite partition $\{I_{\alpha}\}_{\alpha\in\A}$ of $I$ into $d$ subintervals such that $T:I\to I$ is a bijection and $T|_{I_\alpha}$ is a translation for every $\alpha\in\mathcal A$. In particular, $T$ preserves the Lebesgue measure. Roughly speaking, $T$ is a permutation of intervals. Since we will be considering $T$ as a measure-preserving system, we will sometimes consider $T$ as a Lebesgue measure-preserving automorphism of $[0, 1)$. 
Then, $T$ is described by two parameters. First is a permutation $\pi=(\pi_0,\pi_1)$, where $\pi_0,\pi_1:\A\to \{1,\ldots,d\}$ are bijections and $\pi_0(\alpha)$ indicates the position of an interval $I_\alpha$ before the exchange, while $\pi_1(\alpha)$ the position of $I_\alpha$ after the exchange. The second parameter is the length vector $\lambda\in \R^{\mathcal A}_+$, where $\lambda_\alpha=|I_{\alpha}|$. We denote $T=(\pi,\lambda)$.

We will often assume some restrictions on $\pi$ and $\lambda$. First, we will only consider $\pi$ to be irreducible, that is there is no $k\in\{1,\ldots,d-1\}$, such that $\pi_1\circ\pi_0^{-1}\{1,\ldots,k\}=\{1,\ldots,k\}$. We denote the set of such permutations as $S_0^{\mathcal A}$. Moreover, we frequently assume that $|I|=1$ and then $\lambda\in\Lambda^{\mathcal A}$, where $\Lambda\subseteq \R^{\mathcal A}_+$ is a positive unit simplex. 

\subsection{Rauzy-Veech induction.}
\label{sc:RV}
We consider an operator $\tilde{R}: S_0^{\mathcal A}\times \R_{>0}^{\mathcal A} \to S_0^{\mathcal A}\times \R_{>0}^{\mathcal A}$, called \emph{Rauzy-Veech induction}. This operator associates to \emph{almost every} IET $(\pi, \lambda)$ defined on an interval $I = [0, |\lambda|)$ (more precisely, to any IET verifying $\lambda_{\pi_0^{-1}(d)}\neq \lambda_{\pi_1^{-1}(d)}$) another IET with the same number of intervals, given by
$$\tilde{R}(\pi,\lambda) := (\pi^{1},\lambda^{1}),$$ where $(\pi^{1},\lambda^{1})$ represents the IET obtained as the first return map of $(\pi,\lambda)$ to the interval 
$$I^{1}:=\left[0,|\lambda|-\min\big\{\lambda_{\pi_0^{-1}(d)},\lambda_{\pi_1^{-1}(d)}\big\}\right).$$ 
We say that $(\pi, \lambda)$ is of \emph{top} (resp. \emph{bottom}) \emph{type} if $\lambda_{\pi_0^{-1}(d)}>\lambda_{\pi_1^{-1}(d)}$ (resp. $\lambda_{\pi_0^{-1}(d)}<\lambda_{\pi_1^{-1}(d)}$).

Let us recall that this first return, which can be defined for \emph{any} IET $(\pi, \lambda)$, determines an interval exchange transformation with the same number of intervals as the initial one if and only if $\lambda_{\pi_0^{-1}(d)}\neq\lambda_{\pi_1^{-1}(d)}$. Moreover, Keane \cite{keane_interval_1975} gave an equivalent condition on $(\pi,\lambda)$ for the iterations of Rauzy-Veech induction to be defined indefinitely. More precisely, we say that an IET $T$ satisfies \emph{Keane's condition} if, for any two discontinuities $a$ and $b$ of $T$, if $T^n(a) = b$ for some $n \in \mathbb{N}$, then $n = 1$, $a = T^{-1}(0)$, and $b = 0$. In particular, if the vector $\lambda$ is \emph{rationally independent}, that is, for any $(c_\alpha)_{\alpha \in \A} \in \Z^\A$,
\[
\sum_{\alpha\in\mathcal A}c_\alpha\lambda_\alpha=0\ \Rightarrow\ c_\alpha=0\text{ for every }\alpha\in\mathcal A,
\]
then $(\pi,\lambda)$ satisfies Keane's condition. 

When the $n$-th iteration of the Rauzy-Veech induction of $(\pi, \lambda)$ is well-defined, we denote
\[(\pi^{n},\lambda^{n}):= \tilde{R}^n(\pi,\lambda),\] 
its domain of definition $[0, |\lambda^n|)$ by $I^{n}$, and its exchanged intervals by $\{I^n_\alpha\}_{\alpha \in \A}$. 

Note that $\lambda^{1}=A (\pi,\lambda)\lambda$, where a matrix $A(\pi,\lambda)$ is defined in the following way
\[
A_{\alpha\beta}(\pi,\lambda)=\begin{cases}
1 & \text{ if }\alpha=\beta;\\
-1 & \text{ if }\alpha=w \text{ and } \beta=l;\\
0 & \text{ otherwise.}
\end{cases}
\]
Denoting $A^{(0)}(\pi, \lambda) = \textup{Id}$, we define, inductively, 
\[
A^{(n)}(\pi,\lambda)=A(\pi^{n-1},\lambda^{n-1})A^{(n-1)}(\pi,\lambda),
\]
for any $n \geq 1$. Then 
\begin{equation}\label{eq: rellambdamatrix}
\lambda^{n}=A^{(n)}(\pi,\lambda)\lambda.
\end{equation}
We will refer to $A^{(n)}(\pi,\lambda)$ as \emph{Rauzy-Veech matrices}. Note that for every $n\in\N$, the matrix ${A^{(n)}(\pi,\lambda)}^{-1}$ is non-negative. One of the many reasons why Rauzy-Veech matrices are important is because the entry $({A^{(n)}(\pi, \lambda)}^{-1})_{\alpha\beta}$ encodes the number of iterates of $I^{n}_\alpha$ contained in $I_\beta$ (sometimes referred to as the \emph{number of visits}) before its first return to $I^{n}$ via $T$. The return time to $I^n$ is the same for all points in $I^n_\alpha$, and we denote it by $q^n_\alpha$. 

Moreover, the vector $(q_{\alpha}^{n})_{\alpha\in\A}$ satisfies 
\begin{equation}
\label{eq:RVheights}
(q_{\alpha}^{n})_{\alpha\in\A} := (A^{(n)})^*(1,\ldots,1),
\end{equation}
where $(\cdot)^*$ denotes inverse transpose matrix. Using this, we can represent $I$ as a disjoint union of $d=\#\A$ Rokhlin towers,
\[I=\bigsqcup_{\alpha\in\A}\bigsqcup_{i=0}^{q_{\alpha}^{n}-1}T^i (I_{\alpha}^{n}),\]
where each `floor' $T^i (I_{\alpha}^{n})$ is an interval. We refer to the numbers $q_{\alpha}^{n}$ as \emph{heigths}, to the vector $(q_{\alpha}^{n})_{\alpha\in\A}$ as the \emph{height} vector, and to the intervals $I_{\alpha}^{n}$ as \emph{bases} of the towers. 

\subsection{Rauzy graphs} The projected action of the Rauzy-Veech induction defines a directed graph structure on $S_0^{\mathcal A}$, where each vertex has exactly two incoming and two outgoing edges. Any invariant subset $\mathfrak{R}\subseteq S_0^{\mathcal A}$ under this projected action is called a \emph{Rauzy graph}.

Note that the value of $A(\pi, \lambda)$ can be derived only from the permutations $\pi$ and $\pi^{1}$. More precisely, $A(\pi, \lambda)$ does not depend on the exact value of $\lambda$ but only on the type of $(\pi, \lambda)$, which in turn determines $\pi^1$. Thus, the matrices Rauzy-Veech matrices of an IET are completely determined by the sequence of permutations corresponding to subsequent iterations of $\widetilde{\mathcal{R}}$. 

Any finite admissible sequence of permutations in the directed graph structure of $\mathfrak{R}$ is called a \emph{Rauzy path}. 
We denote the length of a Rauzy path $\gamma$ by $|\gamma|$. To any Rauzy path $\gamma$, we associate a non-negative matrix $A_\gamma$ given by the product of the inverse Rauzy-Veech matrices along the path $\gamma$. One can show, by considering the set $A_\gamma(\R^{\A}_{+})$, that any Rauzy path $\gamma$ in a Rauzy class $\mathfrak{R}$ is \emph{realizable} as a sequence of permutations obtained by the successive action of $\tilde{R}$, that is, can be expressed as $[\pi^0 \xrightarrow[]{\epsilon(0)} \pi^1 \xrightarrow[]{\epsilon(1)}, \dots, \xrightarrow[]{\epsilon(L - 1)} \pi^L]$, where $\epsilon(i)$ is the type of $(\pi^i, \lambda^i)$, for some $(\pi,\lambda)\in \mathfrak{R}\times \R_+^{\A}$. Hence
\[A_\gamma := {A^{(|\gamma|)}(\pi, \lambda)}^{-1},\]
for any $(\pi, \lambda)$ following $\gamma$ under successive iterations of $\widetilde{\mathcal{R}}$. Whenever the matrix $A_\gamma$ is positive (i.e., all its entries are positive), we say that the path $\gamma$ is \emph{positive}.

Given two Rauzy paths $\gamma$ and $\gamma'$ such that the last permutation in $\gamma$ is the same as the first permutation in $\gamma'$, we denote its \emph{concatenation} by $\gamma \ast \gamma'$. 
 
\subsection{Normalized induction/renormalization}
\label{subs: induction}
The Rauzy-Veech induction facilitates the construction of Rokhlin towers whose properties can be described and, to some extent, controlled through related normalized operators. Given a Rauzy graph $\mathfrak{R}$, we consider the \emph{normalized Rauzy-Veech induction} $\mathcal{R}:\mathfrak{R}\times\Lambda^{\mathcal A} \to \mathfrak{R} \times\Lambda^{\mathcal A}$ given by
\[
\mathcal{R}(\pi,\lambda):= \left(\pi^{1},\frac{\lambda^{1}}{|\lambda^{1}|}\right).
\]
In \cite{masur_interval_1982} and \cite{veech_gauss_1982}, Masur and Veech independently proved that there exists $\mathcal{R}$-invariant measure $\mu_{\mathfrak{R}}$ on $\mathfrak{R}\times\Lambda^{\mathcal A}$, which is equivalent to the product of the counting measure on $\mathfrak{R}$ and the Lebesgue measure on $\Lambda^{\mathcal A}$, such that $(\mathcal{R},\mu_{\mathfrak{R}})$ is an ergodic measure-preserving system. However, the measure $\mu_{\mathfrak{R}}$ is known to be infinite. For this reason, Zorich \cite{zorich_finite_1996} introduced an `acceleration' of $\mathcal{R}$, which we denote by $\mathcal{Z}$, defined on the (full-measure) set of infinitely renormalizable IETs as
\[
\mathcal{Z}(\pi,\lambda):=\mathcal{R}^{\kappa(\pi,\lambda)}(\pi,\lambda),\]
where $$\kappa(\pi,\lambda):=\max\left\{n \geq 1 \mid (\pi,\lambda),\mathcal{R}(\pi,\lambda), \ldots,\mathcal{R}^{n - 1}(\pi,\lambda)\text{ are of the same type}\right\}.$$
 We sometimes refer to the operator $\mathcal{Z}$ as the \emph{Zorich acceleration}. Similarly, we can define a not normalized version of this acceleration, namely 
\[
\widetilde{\mathcal{Z}}(\pi,\lambda):=\widetilde{\mathcal{R}}^{\kappa(\pi,\lambda)}(\pi,\lambda).\]
For any $n\in\N$, we denote
\[(\pi^{(n)},\lambda^{(n)}):= \widetilde{\mathcal{Z}}^n(\pi,\lambda),\]
its domain of definition $[0, |\lambda^{(n)}|)$ by $I^{(n)}$, and its exchanged intervals by $\{I_\alpha^{(n)}\}_{\alpha \in \A}$.

As before, we can express the interval $I = [0, 1)$ as a union of disjoint Rokhlin towers with heights $q^{(n)} = (q_{\alpha}^{(n)})_{\alpha \in \A}$ and bases $\{ I_{\alpha}^{(n)}\}_{\alpha \in \A}$
\[I=\bigsqcup_{\alpha\in\A}\bigsqcup_{i=0}^{q_{\alpha}^{(n)}-1}T^i \big(I_{\alpha}^{(n)}\big),\]
where $q_{\alpha}^{(n)}$ is the return time of $I^{(n)}_\alpha$ to $I^{(n)}$ via $T$. 
 
We will also consider an associated cocycle of matrices, called the \emph{Kontsevich-Zorich cocycle}, defined as $B(\pi,\lambda):=A^{\left(\kappa(\pi,\lambda)\right)}$. We will also use the notation for intermediate matrices 
\begin{equation}
\label{eq:cocycle_intermediate}
B(n,n+1)(\pi,\lambda):= B(\pi^{(n)}, \lambda^{(n)}),
\end{equation}
and the cocycle notation
\begin{equation}
\label{eq:cocycle_notation}
B^{(0)}(\pi, \lambda) = \textup{Id}, \qquad B^{(n)}(\pi,\lambda)=B(\pi^{(n - 1)},\lambda^{(n - 1)})B^{(n-1)}(\pi,\lambda), \quad n \geq 1.
\end{equation}

Denoting
\[Q(\pi, \lambda):= B^*(\pi, \lambda),\]
we can express the lengths and heights of these iterates using cocycle notation
\begin{equation}
\label{eq: rellambdamatrixacceleration}
\lambda^{(n)}=B^{(n)}(\pi,\lambda)\lambda, \qquad q^{(n)} = Q^{(n)}(\pi,\lambda)(1, \dots, 1),
\end{equation}
{
where $Q^{(n)}(\pi,\lambda)$ is defined in a manner analogous to \eqref{eq:cocycle_notation}.

We define the intermediate steps $Q(n, n + 1)(\pi, \lambda)$ in a manner similar to \eqref{eq:cocycle_intermediate}. Moreover, if there is no risk of confusion, we will sometimes omit the IET being considered in the notations for intermediate steps of the cocycles $B$ and $Q$ and denote, for example, $Q(n, n + 1)(\pi,\lambda)$ simply by $Q(n, n + 1)$.}

The crucial difference between $\mathcal{Z}$ and $\mathcal{R}$ is that the former preserves a \emph{finite} measure $\rho_{\mathfrak{R}}$, as proven by Zorich in \cite{zorich_finite_1996}. 

\subsection{Lyapunov exponents of the Kontsevich-Zorich cocycle}
\label{sc:KZ}
In \cite{zorich_finite_1996}, Zorich proved that the Kontsevich-Zorich cocycle $B$ (and, therefore, also $Q$) is \emph{integrable}. More precisely, he proved that
\begin{equation}\label{eq: integrability}
\int_{\mathfrak{R}\times\Lambda^{\mathcal A}} \log \|B(\pi,\lambda)\|\,d\rho_{\mathfrak{R}}, \int_{\mathfrak{R}\times\Lambda^{\mathcal A}} \log \|Q(\pi,\lambda)\|\,d\rho_{\mathfrak{R}} <\infty.
\end{equation}
The above fact is one of the crucial tools used in this article since it allows us to consider the Lyapunov exponents associated with these cocycles. Notice that since $B$ and $Q$ are \emph{dual} to each other (we refer the interested reader to \cite{zorich_deviation_1997} for a precise definition of dual cocycle), they have the same Lyapunov spectrum when restricted to any ergodic component of $\mathcal{Z}$.

Let us recall that, as a consequence of several classical works \cite{veech_gauss_1982, zorich_finite_1996, forni_deviation_2002, avila_simplicity_2007}, for any Rauzy class $\mathfrak{R}$, the Zorich acceleration is ergodic when restricted to $\mathfrak{R} \times \Lambda^\A$ and that, for some $g \geq 1$ depending only on $\mathfrak{R}$, the cocycles $B$ and $Q$ restricted to $\mathfrak{R} \times \Lambda^\A$ possess $2g$ non-zero Lyapunov exponents of the form
\[ -\theta_{1} < \dots < -\theta_{g} < 0 < \theta_g < \dots < \theta_1,\]
while the remaining $d - 2g$ Lyapunov exponents are equal to $0$.

{For more information on interval exchange transformations and associated renormalizations, we refer the reader to \cite{viana_ergodic_2006} and \cite{yoccoz_interval_2010}.}
{
\subsection{Path-induced domains} We prove now a series of simple results concerning $\mathcal{R}$ (resp. $\mathcal{Z}$) and the associated Rauzy-Veech (resp. Kontsevich-Zorich) cocycle. The main purpose of these results is to define appropriate domains on which the first return map of $\mathcal{R}$ (resp. $\mathcal{Z}$) and the associated cocycle display certain properties that we describe below. This will be particularly useful in the proof of Proposition \ref{prop:balanced_times}, which is a crucial tool in Theorem \ref{thm:good_returns}.

Let $\mathfrak{R}$ be a Rauzy graph and let $\gamma$ be a Rauzy path in $\mathfrak{R}$ of length $L:=|\gamma|$. Define
\[
\Delta_{\gamma}:=\pi_{\Lambda^{\mathcal A}} \big( A_{\gamma}(\R_+^{\A})\big) =\left\{(\pi,\lambda)\in\mathfrak{R}\times \Lambda^{\A}\mid (\pi,\lambda)\text{ follows $\gamma$ via }\mathcal{R}\right\},
\]
where $\pi_{\Lambda^{\mathcal A}}:\R_+^{\A}\to \Lambda^{\mathcal A}$ is {the projection $v \mapsto \tfrac{v}{|v|_1}$ onto} the simplex $\Lambda^{\mathcal A}$. 

By extending the path $\gamma$, we may guarantee that we do not see path $\gamma$ as an initial Rauzy path for the first $L$ iterations of $\mathcal{R}$.
\begin{lemma}\label{lem: noreturn}
Let $\mathfrak{R}$ be a Rauzy class and let $\gamma$ be a Rauzy path in $\mathfrak{R}$ of length $L \geq 1$. There exists a path $\tilde\gamma$ of length $2L$ such that $\Delta_{\tilde\gamma}\subseteq \Delta_{\gamma}$ and 
 \[
\mathcal{R}^i(\Delta_{\tilde\gamma})\cap \Delta_{\gamma}=\emptyset\quad\text{for}\quad i=1,\ldots,L-1.
 \]
\end{lemma}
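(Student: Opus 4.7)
The plan is to extend $\gamma$ with $L$ additional edges, each of type opposite to the type of the last edge of $\gamma$. Writing $\gamma = [\pi^0 \to \pi^1 \to \cdots \to \pi^L]$ with associated type sequence $\epsilon_0, \ldots, \epsilon_{L-1} \in \{\mathrm{top}, \mathrm{bot}\}$, and letting $\epsilon^* \in \{\mathrm{top}, \mathrm{bot}\}$ denote the type opposite to $\epsilon_{L-1}$, I define $\tilde\gamma$ as $\gamma$ followed by $L$ consecutive edges of type $\epsilon^*$. Since at every permutation of $\mathfrak{R}$ both outgoing edges (top and bottom) exist in the Rauzy graph, this extension is always admissible, so $\tilde\gamma$ is a well-defined Rauzy path of length $2L$. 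The inclusion $\Delta_{\tilde\gamma} \subseteq \Delta_\gamma$ is immediate since $\gamma$ is a prefix of $\tilde\gamma$.

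To verify the disjointness condition, fix $i \in \{1, \ldots, L-1\}$ and any $(\pi, \lambda) \in \Delta_{\tilde\gamma}$. A Rauzy path is uniquely determined by its starting permutation together with its sequence of types, so $\mathcal{R}^i(\pi,\lambda)$ lies in $\Delta_\gamma$ if and only if the sub-path of $\tilde\gamma$ of length $L$ starting at position $i$ coincides with $\gamma$, which in particular requires the two type sequences to agree entry by entry. I examine the last entry of this window: it sits at position $i + L - 1$ of $\tilde\gamma$, and since $i \geq 1$ one has $i + L - 1 \geq L$, so this position falls strictly inside the extension and carries the type $\epsilon^*$. However, the corresponding last entry of $\gamma$'s type sequence is $\epsilon_{L-1}$, and by construction $\epsilon^* \neq \epsilon_{L-1}$. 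This contradicts the assumed matching, so $\mathcal{R}^i(\Delta_{\tilde\gamma}) \cap \Delta_\gamma = \emptyset$, as desired.

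I do not foresee any serious obstacle, as the argument is essentially a short combinatorial observation. The one point worth highlighting is that in a Rauzy graph we have complete freedom in prescribing the type sequence of any extension of $\gamma$, and a single well-chosen flip of the final type handles every $i \in \{1, \ldots, L-1\}$ uniformly: for each such $i$, the rightmost slot of the window of length $L$ starting at position $i$ necessarily lands inside the extension, where the type was deliberately chosen to differ from $\epsilon_{L-1}$. No finer analysis of the graph $\mathfrak{R}$, of positivity of $A_{\tilde\gamma}$, or of the induced simplex $\Delta_{\tilde\gamma}$ is required.
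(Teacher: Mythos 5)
Your proof is correct and follows essentially the same argument as the paper: extend $\gamma$ by $L$ arrows whose type is opposite to that of the last arrow of $\gamma$, and observe that for each $i\in\{1,\dots,L-1\}$ the final arrow of the shifted length-$L$ window lands in the extension, where its type cannot match the last arrow of $\gamma$. The paper merely phrases this with a WLOG on the type of the last arrow instead of your symmetric $\epsilon^*$ notation.
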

\begin{proof}
WLOG suppose that the last arrow in the path $\gamma$ was of bottom type; the proof in the opposite case is analogous. Take the path $\tilde\gamma:=\gamma * \gamma_L^t$, where $\gamma_L^t$ is a path of length $L$, starting at the final vertex of $\gamma$, such that all its arrows are of top type. Since $\tilde\gamma$ starts with $\gamma$, we have $\Delta_{\tilde\gamma}\subseteq \Delta_{\gamma}$. Moreover, for any $(\pi,\lambda)\in\Delta_{\tilde\gamma}$ and any $i=1,\ldots,L-1$, we have that $\mathcal{R}^{i+L}(\pi, \lambda)$ is of top type. Since the last arrow in $\gamma$ is of bottom type, this finishes the proof of the lemma.
\end{proof}

Recall that if $(\pi,\lambda)$ is infinitely renormalizable then there exists $m\in\N$ such that the path $\gamma_m(\pi, \lambda)$ obtained by considering the first $m$ iterations of $(\pi, \lambda)$ by $\mathcal{R}$ is positive. Moreover, for a.e. $(\pi,\lambda)$ we have
\begin{equation}
\label{eq:uniquely_ergodic}
\{\lambda \} = \bigcap_{m \geq 0} \Delta_{\gamma_m(\pi, \lambda)}.
\end{equation}
For proof of these well-known properties, we refer the interested reader to \cite{viana_ergodic_2006} (see Corollary 5.3 and Remark 29.3).

The following well-known fact, which we prove here for completeness, concerns the relation between the positivity of this matrix and the height vectors associated with the Rauzy-Veech induction.

\begin{lemma}\label{lem: heigthsandmatrices}
Let $\mathfrak{R}$ be a Rauzy class and let $\gamma$ be a positive Rauzy path in $\mathfrak{R}$. There exists a constant $C_\gamma>0$ such that for any infinitely renormalizable $(\pi,\lambda)\in\mathfrak{R}\times\Lambda^{\mathcal A}$ and any $K\ge L \geq 0$ satisfying $\mathcal{R}^{K-L}(\pi,\lambda)\in \Delta_{\gamma}$, 
 \[
 \max_{\alpha,\beta\in\mathcal A}\,\frac{q_{\alpha}^{K}}{q_{\beta}^{K}}\le C_{\gamma}.
 \]
\end{lemma}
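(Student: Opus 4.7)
The plan is to express $q^K$ directly as $A_\gamma^T q^{K-L}$ and then read off the uniform ratio bound from the positivity of $A_\gamma$. Setting $L = |\gamma|$, the hypothesis $\mathcal{R}^{K-L}(\pi,\lambda) \in \Delta_\gamma$ says that the Rauzy-Veech trajectory from step $K-L$ to step $K$ traces out exactly the path $\gamma$. Consequently the product of the $L$ Rauzy-Veech matrices accumulated along this stretch depends only on $\gamma$ and equals $A_\gamma^{-1}$ by the very definition of $A_\gamma$; hence
\[
A^{(K)}(\pi,\lambda) \;=\; A_\gamma^{-1}\, A^{(K-L)}(\pi,\lambda).
\]

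Next I would transport this identity to the height vectors via \eqref{eq:RVheights}. Since the involution $(\cdot)^\ast$ satisfies $(MN)^\ast = M^\ast N^\ast$ and $(P^{-1})^\ast = P^T$ (both immediate from the paper's convention $M^\ast := (M^{-1})^T$), the displayed identity gives $\bigl(A^{(K)}\bigr)^\ast = A_\gamma^T \,\bigl(A^{(K-L)}\bigr)^\ast$. Evaluating both sides on $(1,\dots,1)^T$ yields the clean relation
\[
q^K \;=\; A_\gamma^T\, q^{K-L}, \qquad \text{i.e.,}\qquad q^K_\alpha \;=\; \sum_{\beta\in\A} (A_\gamma)_{\beta\alpha}\, q^{K-L}_\beta \quad\text{for every } \alpha\in\A.
\]

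To conclude, I would use that $\gamma$ is positive, so every entry of $A_\gamma$ is strictly positive. Setting $a_\gamma := \min_{\alpha,\beta\in\A}(A_\gamma)_{\alpha\beta} > 0$ and $b_\gamma := \max_{\alpha,\beta\in\A}(A_\gamma)_{\alpha\beta}$, both depending only on $\gamma$, the previous display sandwiches each $q^K_\alpha$ between $a_\gamma \sum_\delta q^{K-L}_\delta$ and $b_\gamma \sum_\delta q^{K-L}_\delta$. Hence $q^K_\alpha / q^K_\beta \le b_\gamma/a_\gamma$ for all $\alpha,\beta \in \A$, and it suffices to set $C_\gamma := b_\gamma/a_\gamma$. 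There is no genuine obstacle here: once the matrix identity $A^{(K)} = A_\gamma^{-1} A^{(K-L)}$ is in hand, the remaining argument is bookkeeping with the star involution together with the positivity of $A_\gamma$.
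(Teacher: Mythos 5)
Your proof is correct and follows essentially the same route as the paper: both rest on the identity $q^{K} = A_\gamma^{T} q^{K-L}$ (the paper writes it as the row-vector relation $[q^{K}_\alpha] = [q^{K-L}_\alpha]\cdot A_\gamma$) and then exploit the strict positivity of $A_\gamma$ to bound the ratio of heights. The only cosmetic difference is the final constant — you take $C_\gamma = \max_{\alpha,\beta}(A_\gamma)_{\alpha\beta}/\min_{\alpha,\beta}(A_\gamma)_{\alpha\beta}$ while the paper takes $\sum_{\alpha,\beta}(A_\gamma)_{\alpha\beta}$ — which is immaterial.
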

\begin{proof}
By \eqref{eq:RVheights}, 
 \[
[q^{K}_{\alpha}]_{\alpha\in\A}=[q^{K-L}_{\alpha}]_{\alpha\in\A}\cdot A_{\gamma}.
 \]
Since $\gamma$ is positive, we also have $\min_{\alpha\in\A}q^{K}_{\alpha}>\max_{\alpha\in\A}q^{K-L}_{\alpha}.$
 Hence 
 \[
\max_{\alpha,\beta\in\mathcal A}\,\frac{q_{\alpha}^{K}}{q_{\beta}^{K}}\le 
\frac{\max_{\alpha\in\A}q^{K-L}_{\alpha}\cdot \sum_{\alpha,\beta\in\mathcal A}(A_\gamma)_{\alpha\beta}}{\max_{\alpha\in\A}q^{K-L}_{\alpha}}.
 \]
 Thus, to conclude the proof, it is enough to take $C_{\gamma}:=\sum_{\alpha,\beta\in\mathcal A}(A_\gamma)_{\alpha\beta}$.
\end{proof}
For convenience, in the proof of Proposition \ref{prop:balanced_times}, we consider a path that is the concatenation $\gamma*\gamma*\gamma$ of 3 copies of the path $\gamma$. It is easy to check that $\Delta_{\gamma*\gamma*\gamma}=A_{\gamma}^3(\R^{\mathcal A}_+)$. However, we will require more from our path. In the next lemma, we will construct a path $\gamma$, which depends on the starting point and is coherent with the Kontsevich-Zorich acceleration.
\begin{lemma}\label{lem: constructionofgamma}
Let $\mathfrak{R}$ be a Rauzy class and fix $\ell_0 \in \N$. For a.e. $(\pi,\lambda)\in \mathfrak{R}\times\Lambda^{\mathcal A}$, there exists $\ell > \ell_0$ such that the path $\gamma = [\pi^0 \xrightarrow[]{\epsilon(0)}, \dots, \xrightarrow[]{\epsilon(\ell - 1)} \pi^\ell]$ of length $\ell$, associated with the first $\ell$ iterations of $(\pi, \lambda)$ by $\mathcal{R}$, satisfies:
 \begin{itemize}
 \item $A_{\gamma}$ is a positive matrix,
 \item $\pi^0 = \pi = \pi^\ell$,
 \item the first and last arrow of $\gamma$ are of opposite types.
 \end{itemize}
 In particular, if $(\pi,\lambda')\in \Delta_{\gamma*\gamma*\gamma*\varphi}$, where 
 $\varphi = [\pi^0 \xrightarrow[]{\epsilon(0)} \pi^1]$, then
 \[
\mathcal{R}^{i\ell}(\pi,\lambda')=\mathcal{Z}^{iL}(\pi,\lambda') \text{ for }i=1,2,3,
 \]
 where $L$ is the number of type changes in the path $\gamma$.
\end{lemma}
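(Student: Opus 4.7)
The plan is to combine the ergodicity of $\mathcal Z$ with the alternation of Rauzy--Veech types across consecutive Zorich blocks. First, I would set up a parity framework: partition $\mathfrak R \times \Lambda^{\A}$ into two open subsets $\mathfrak T$ and $\mathfrak B$ of IETs whose next Rauzy--Veech step is of top, respectively bottom, type. Set $\mathfrak T_\pi := \mathfrak T \cap (\{\pi\}\times \Lambda^{\A})$ and analogously $\mathfrak B_\pi$. Since $\pi$ admits Rauzy--Veech iteration, $\pi_0^{-1}(d)\ne \pi_1^{-1}(d)$, so $\mathfrak T_\pi$ and $\mathfrak B_\pi$ are non-empty open subsets of $\{\pi\}\times \Lambda^{\A}$, hence have positive Lebesgue measure. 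Assume without loss of generality that $(\pi,\lambda)\in \mathfrak T_\pi$; the other case is symmetric.

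Next I would exploit ergodicity to extract the desired returns. By the ergodicity of $\mathcal Z$ with respect to the finite invariant measure $\rho_{\mathfrak R}$, for a.e.\ $(\pi,\lambda)\in \mathfrak T_\pi$ there are infinitely many $n$ with $\mathcal Z^n(\pi,\lambda)\in \mathfrak T_\pi$. The key parity observation is that every such $n$ is even: by the very definition of $\mathcal Z$, consecutive Zorich blocks carry opposite Rauzy--Veech types, so starting with a first block of top type, the $(k+1)$-th Zorich block is of top type exactly when $k$ is even; but $\mathcal Z^n(\pi,\lambda)\in \mathfrak T_\pi$ means precisely that the $(n+1)$-th Zorich block is of top type, forcing $n$ to be even.

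Now I would translate each such $n$ into a Rauzy--Veech length. Let $\ell(n)$ denote the number of Rauzy--Veech steps consumed by the first $n$ Zorich iterations, so that $\mathcal R^{\ell(n)}(\pi,\lambda)=\mathcal Z^n(\pi,\lambda)$, and let $\gamma$ be the associated Rauzy path. For $n$ sufficiently large, $\ell(n)>\ell_0$; the matrix $A_\gamma=(A^{(\ell(n))}(\pi,\lambda))^{-1}$ is positive, since for a.e.\ $(\pi,\lambda)$ this inverse matrix becomes positive after finitely many Rauzy--Veech steps and positivity persists under further multiplication on the right by non-negative elementary matrices; and $\pi^{\ell(n)}=\pi$, because $\mathcal Z^n(\pi,\lambda)\in\{\pi\}\times \Lambda^{\A}$. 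The first arrow of $\gamma$ is of top type by the assumption $(\pi,\lambda)\in\mathfrak T_\pi$, while the last arrow of $\gamma$ is the final step of the $n$-th Zorich block and hence of bottom type (since $n$ is even and block types alternate). The three required properties are thus satisfied. For the ``In particular'' clause, if $(\pi,\lambda')\in \Delta_{\gamma\ast\gamma\ast\gamma\ast\varphi}$ then the first $3\ell+1$ Rauzy--Veech steps of $(\pi,\lambda')$ follow this concatenation; at each boundary $i\ell$ with $i=1,2,3$, the arrow at position $i\ell$ is of bottom type (last arrow of a copy of $\gamma$), while the arrow at position $i\ell+1$ is of top type (first arrow of the next copy of $\gamma$ for $i=1,2$, or of $\varphi$ for $i=3$), so the Zorich block in progress at time $i\ell$ terminates exactly there, yielding $\mathcal R^{i\ell}(\pi,\lambda')=\mathcal Z^{iL}(\pi,\lambda')$ with $L$ the number of Zorich blocks contained in $\gamma$.

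The main obstacle is the parity step: realising that returns of $\mathcal Z$ to $\mathfrak T_\pi$ are automatically concentrated at even times is what forces the first and last arrows of $\gamma$ to be of opposite types, and this is precisely what produces the synchronisation between copies of $\gamma$ and Zorich blocks exploited in the ``In particular'' consequence.
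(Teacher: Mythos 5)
Your proof is correct. It differs from the paper's in the mechanism used to force the last arrow of $\gamma$ to have the opposite type from the first. The paper works with the non-accelerated renormalization $\mathcal{R}$: it fixes the incoming bottom-type arrow $\pi'\to\pi$ of the Rauzy graph and uses ergodicity (and conservativity) of $\mathcal{R}$ to find a time $\ell-1\ge\tilde\ell$ at which the orbit lies in the positive-measure set of IETs over $\pi'$ whose next step is exactly that arrow; the resulting path of length $\ell$ then ends at $\pi$ via a bottom-type arrow, and positivity of $A_\gamma$ is obtained precisely as you do, by pushing $\ell$ past the first positivity time. You instead use ergodicity of the finite-measure-preserving acceleration $\mathcal{Z}$ together with the (correct, and immediate from the definition of $\kappa$) observation that the type of $\mathcal{Z}^n(\pi,\lambda)$ alternates with $n$, so that returns to $\mathfrak{T}_\pi$ occur only at even $n$ and the Zorich block just completed is automatically of bottom type. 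Both arguments are sound and of comparable length; yours has the advantage that the path is by construction a union of complete Zorich blocks, which makes the ``in particular'' clause essentially tautological (the paper disposes of it in one sentence by the same synchronization observation), and it avoids having to identify the incoming arrow of prescribed type at $\pi$, while the paper's version needs no parity bookkeeping. One cosmetic point: you take $L$ to be the number of Zorich blocks contained in $\gamma$, which equals the number of type changes \emph{interior} to $\gamma$ plus one; the paper's phrase ``number of type changes in the path $\gamma$'' should be read as also counting the change at the junction with the next copy of $\gamma$ (or with $\varphi$), which exists exactly because the first and last arrows of $\gamma$ have opposite types, so the two counts coincide.
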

\begin{proof}
 Since $\mathcal{R}$ is ergodic, we may assume that $(\pi,\lambda) \in \mathfrak{R}\times\Lambda^{\mathcal A}$ is a generic point for $\mathcal{R}$. WLOG, we may also assume that $\mathcal{R}(\pi,\lambda)$ is of top type, as the other case is completely analogous. Since $(\pi, \lambda)$ is infinitely renormalizable, there exists $\tilde \ell > \ell_0$ such that the path $\gamma_{\tilde \ell}(\pi, \lambda)$ is positive. Notice that, for any $\ell \geq \tilde \ell$, the path $\gamma_{\ell}(\pi, \lambda)$ is also positive since the product of a positive matrix with a non-negative matrix having positive diagonal entries yields a positive matrix. 
 
 Consider an arrow of bottom type that ends at $\pi$. Let $\pi'$ be the initial vertex of this arrow. Then, for a positive measure set of length vectors $\hat\lambda$, we have that $\mathcal{R}(\pi',\hat\lambda)$ is of bottom type. Hence, by ergodicity of $\mathcal{R}$, there exists $\ell \ge\tilde \ell$ such that $\mathcal{R}^{\ell - 1}(\pi, \lambda) \in \{\pi'\} \times \Lambda^\A$ is of bottom type, and the path $\gamma$ associated with the first $\ell$ iterates of $(\pi,\lambda)$ by $\mathcal{R}$ satisfies the desired conditions. 
 
Notice that the last assertion of the lemma follows from the definition of the Kontsevich-Zorich acceleration and the fact that the first and the last arrow of $\gamma$ are of opposite types.
\end{proof}

Finally, we state a result that will be helpful in the proof of Proposition \ref{prop:balanced_times}.

\begin{proposition}\label{prop: pathconstruction}
Let $\mathfrak{R}$ be a Rauzy class. For a.e $(\tilde\pi,\tilde\lambda)\in\mathfrak{R}\times \Lambda^{\A}$ and for any open neighbourhood $(\tilde\pi,\tilde\lambda)\in U\subseteq \mathfrak{R}\times \Lambda^{\A}$ there exist paths $\gamma, \tilde \gamma$ in $\mathfrak{R}$ such that $\Delta_{\tilde \gamma} \subseteq \Delta_{\gamma} \subseteq U$ and, denoting $\ell := |\gamma|$, for a.e. $(\pi,\lambda)\in \Delta_{\tilde\gamma}$ the following conditions hold. 
 \begin{itemize}
\item All the entries of $A_\gamma$ are bigger or equal than $2$.
\item $(\pi,\lambda)$ is infinitely renormalizable.
\item The next $3\ell$ Rauzy-Veech renormalizations of $(\pi,\lambda)$ follow the concatenated path $\gamma \ast \gamma \ast \gamma$. 
\item $\mathcal{R}^{i\ell}(\pi, \lambda) = \RVKZ^{i L}(\pi, \lambda)$, for $i \in \{1, 2, 3\}$.
\item $\RVKZ^i(\pi, \lambda) \notin \Delta_{\tilde \gamma}$, for $i \in \{1, \dots, 3L - 1\}$.\item $\Delta_{\tilde \gamma} \cup \RVKZ^{L}(\Delta_{\tilde \gamma}) \cup \RVKZ^{2L}(\Delta_{\tilde \gamma}) \subseteq \mathfrak{R} \times U.$
\item 

\begin{equation}
\label{eq:several_balanced_heights}
\max_{\substack{\alpha, \beta \in \A, \\ i \in \{1, 2, 3\}}} \frac{q^{(iL)}_\alpha(\pi, \lambda)}{q^{(iL)}_\beta(\pi, \lambda)} < C_\gamma.
\end{equation}
\end{itemize}
In particular, if $U = \left\{ \lambda \in \Lambda^\A \,\left|\, |\lambda_\alpha - \lambda_\beta| \leq \tfrac{\nu}{2d}, \quad \forall \alpha, \beta \in \A, \alpha \neq \beta\right\}\right.$ for some $0 < \nu < 1$, then 
\begin{equation}
\label{eq:several_balanced_lengths}
\max_{\substack{\alpha, \beta \in \A, \\ i \in \{0, 1, 2\}}} \frac{\lambda^{(iL)}_\alpha(\pi, \lambda)}{\lambda^{(iL)}_\beta(\pi, \lambda)} < 1 + \nu.
\end{equation}
\end{proposition}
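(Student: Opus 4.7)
The plan is to construct $\gamma$ and $\tilde\gamma$ in three stages: (i) produce a ``base'' path via Lemma \ref{lem: constructionofgamma}, (ii) square it to strengthen positivity, and (iii) extend it via Lemma \ref{lem: noreturn}. By \eqref{eq:uniquely_ergodic}, for a.e.\ $(\tilde\pi, \tilde\lambda)$ the nested sequence $\Delta_{\gamma_m(\tilde\pi, \tilde\lambda)}$ shrinks to $\{(\tilde\pi, \tilde\lambda)\}$, so I can fix $\ell_0$ with $\Delta_{\gamma_{\ell_0}(\tilde\pi, \tilde\lambda)} \subseteq U$ and apply Lemma \ref{lem: constructionofgamma} with this $\ell_0$ to obtain a path $\eta$ of length $\ell^* > \ell_0$ corresponding to the first $\ell^*$ iterates of $(\tilde\pi, \tilde\lambda)$ under $\mathcal{R}$, such that $A_\eta$ is positive, $\eta$ starts and ends at $\tilde\pi$, and its first and last arrows are of opposite types; since $\eta$ extends the initial $\ell_0$-path, $\Delta_\eta \subseteq U$. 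I then set $\gamma := \eta \ast \eta$ of length $\ell := 2\ell^*$: the matrix $A_\gamma$ has all entries $\geq d \geq 2$ (each is a sum of $d$ products of two positive integer entries of $A_\eta$), while $\gamma$ inherits from $\eta$ its other structural features (start and end at $\tilde\pi$, first and last arrows of opposite types, $\Delta_\gamma \subseteq U$). The identity $\mathcal{R}^{i\ell} = \mathcal{Z}^{iL}$ on $\Delta_{\gamma \ast \gamma \ast \gamma \ast \varphi}$ for $i \in \{1, 2, 3\}$ (with $\varphi$ the first arrow of $\gamma$ and $L$ its number of type changes) then follows by the same reasoning as the final assertion of Lemma \ref{lem: constructionofgamma}, since $\gamma$ satisfies the same structural hypotheses.

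Next, I would apply Lemma \ref{lem: noreturn} to the path $\gamma \ast \gamma \ast \gamma \ast \varphi$ (of length $3\ell + 1$) to produce $\tilde\gamma$ of length $2(3\ell + 1)$ with $\Delta_{\tilde\gamma} \subseteq \Delta_{\gamma \ast \gamma \ast \gamma \ast \varphi}$ and $\mathcal{R}^i(\Delta_{\tilde\gamma}) \cap \Delta_{\gamma \ast \gamma \ast \gamma \ast \varphi} = \emptyset$ for every $i \in \{1, \ldots, 3\ell\}$. Writing $n_1 < \cdots < n_{3L-1} < n_{3L} = 3\ell$ for the consecutive Zorich return times, each $n_i$ with $i \leq 3L - 1$ lies in $\{1, \ldots, 3\ell - 1\}$, so $\mathcal{Z}^i(\pi, \lambda) = \mathcal{R}^{n_i}(\pi, \lambda) \notin \Delta_{\gamma \ast \gamma \ast \gamma \ast \varphi} \supseteq \Delta_{\tilde\gamma}$, establishing the required Zorich no-return condition. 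The containment $\Delta_{\tilde\gamma} \cup \mathcal{Z}^L(\Delta_{\tilde\gamma}) \cup \mathcal{Z}^{2L}(\Delta_{\tilde\gamma}) \subseteq \mathfrak{R} \times U$ then follows from $\mathcal{R}^{i\ell}(\Delta_{\tilde\gamma}) \subseteq \Delta_\gamma \subseteq U$ for $i \in \{0, 1, 2\}$, since after $i\ell$ RV iterates there is always at least one remaining copy of $\gamma$ ahead in the prescribed path.

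For the heights bound \eqref{eq:several_balanced_heights}, Lemma \ref{lem: heigthsandmatrices} applied to the positive path $\gamma$ with $K = i\ell$ yields $\max_{\alpha, \beta} q^{i\ell}_\alpha/q^{i\ell}_\beta \leq C_\gamma$ for $i \in \{1, 2, 3\}$, using that $\mathcal{R}^{(i-1)\ell}(\pi, \lambda) \in \Delta_\gamma$; since $\mathcal{R}^{i\ell}$ and $\mathcal{Z}^{iL}$ produce the same state one has $q^{i\ell}_\alpha = q^{(iL)}_\alpha$, and strict inequality can be obtained by slightly enlarging $C_\gamma$. For \eqref{eq:several_balanced_lengths}, under the specified form of $U$, the inclusion $\mathcal{Z}^{iL}(\pi, \lambda) \in \mathfrak{R} \times U$ forces $|\lambda^{(iL)}_\alpha - \lambda^{(iL)}_\beta| \leq \nu/(2d)$ on the normalized length vector, so $\sum_\alpha \lambda^{(iL)}_\alpha = 1$ gives $\lambda^{(iL)}_\beta \geq (1 - \nu/2)/d$ for every $\beta$, whence $\lambda^{(iL)}_\alpha / \lambda^{(iL)}_\beta \leq (1 + \nu/2)/(1 - \nu/2) < 1 + \nu$ for $\nu < 1$.

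The main bookkeeping challenge is juggling the four time scales in play (RV and Zorich step indices and the parameters $\ell$ and $L$) while ensuring all bulleted properties hold simultaneously. The key conceptual observation making the argument clean is that the Zorich no-return condition up to index $3L - 1$ is automatically implied by the stronger RV no-return condition up to index $3\ell - 1$ furnished by Lemma \ref{lem: noreturn}, which is why a single invocation of that lemma, applied to $\gamma \ast \gamma \ast \gamma \ast \varphi$, suffices rather than requiring a separate Zorich-level argument.
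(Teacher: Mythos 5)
Your construction is essentially the paper's: obtain the base path from Lemma \ref{lem: constructionofgamma}, force all entries of $A_\gamma$ to be at least $2$ by viewing $\gamma$ as a concatenation of two positive paths, apply Lemma \ref{lem: noreturn} to $\gamma\ast\gamma\ast\gamma\ast\varphi$ to get $\tilde\gamma$, and read off the remaining bullets from Lemmas \ref{lem: noreturn}, \ref{lem: heigthsandmatrices} and \ref{lem: constructionofgamma}; your observation that the Zorich no-return condition is inherited from the Rauzy--Veech no-return condition, and your use of $\varphi$ to terminate the last Zorich block, are exactly what the paper's terse proof relies on. (The only cosmetic difference is that you take $\gamma=\eta\ast\eta$ rather than a longer itinerary of $(\tilde\pi,\tilde\lambda)$ itself; this is harmless since the proposition never requires $(\tilde\pi,\tilde\lambda)\in\Delta_\gamma$.)

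One step is actually false as written: the inequality $\tfrac{1+\nu/2}{1-\nu/2}<1+\nu$ fails for every $\nu\in(0,1)$, since $(1+\nu)(1-\nu/2)=1+\nu/2-\nu^2/2<1+\nu/2$. The conclusion \eqref{eq:several_balanced_lengths} still follows from your own bounds: from $\lambda_\alpha-\lambda_\beta\le \tfrac{\nu}{2d}$ and $\lambda_\beta\ge \tfrac{1-\nu/2}{d}$ one gets
\[
\frac{\lambda_\alpha}{\lambda_\beta}=1+\frac{\lambda_\alpha-\lambda_\beta}{\lambda_\beta}\le 1+\frac{\nu/(2d)}{(1-\nu/2)/d}=1+\frac{\nu}{2-\nu}<1+\nu,
\]
using $2-\nu>1$. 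Replace the quotient-of-bounds estimate by this one and the argument is complete.
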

\begin{proof}
Let $\ell$, $\gamma = \gamma_{\ell}(\tilde\pi,\tilde\lambda)$ and $\varphi$ as in Lemma \ref{lem: constructionofgamma} when applied to $(\tilde\pi,\tilde\lambda)$. Assuming WLOG that $(\pi, \lambda)$ satisfies \eqref{eq:uniquely_ergodic} and taking $\ell$ larger if necessary, we may assume that all the entries of $A_\gamma$ are bigger or equal to $2$ (since for $\ell$ sufficiently large we can see $\gamma_{\ell}(\tilde\pi,\tilde\lambda)$ as a concatenation of two positive paths and thus see $A_\gamma$ as the product of two positive matrices) and that $\Delta_\gamma\subseteq U$. 

Applying Lemma \ref{lem: noreturn} to $\gamma*\gamma*\gamma * \varphi$ yields a positive path $\tilde \gamma$ (obtained as a concatenation of $\gamma*\gamma*\gamma * \varphi$ with another path of equal length) which, by Lemmas \ref{lem: noreturn}, \ref{lem: heigthsandmatrices} and \ref{lem: constructionofgamma}, verifies all of the desired conditions.
\end{proof}

}

\subsection{Nudging of discontinuities}

{This notion was first introduced in the work of Chaika and Robertson \cite{chaika_ergodicity_2019}.}
Given $f \in C_{m, M}$ of the form $p_1\chi_{[0, q_1)} + \dots + p_{m + 1}\chi_{[q_1 + \dots + q_m, 1)}$ for some some $(q, p) \in \R^{m + 1}_+ \times \R^{m + 1}$, we can move the location of its $i$-th discontinuity by $\zeta \in \R$ provided that $|\zeta| < \tfrac{\Gamma}{2}$, where $\Gamma := \min \{q_1, \dots, q_{m +1}\}$, by considering the function $\textup{nudge}(f, i, \zeta) \in C_{m, M}$ associated to the vector
\[ \left(q_1, \dots, q_{i - 1}, q_i + \zeta, q_{i + 1} - \zeta, q_{i + 1}, \dots, q_{m + 1}, p_1, \dots, p_i, \frac{p_{i +1}q_{i + 1} - \zeta p_i}{q_{i + 1} - \zeta}, p_{i + 2}, \dots, p_{m + 1} \right).\]
Notice that the location of other discontinuities, as well as $m - 1$ of the possible values taken by $f$, remain unchanged. Moreover, we have
\[ \left| \frac{p_{i +1}q_{i + 1} - \zeta p_i}{q_{i + 1} - \zeta}, - p_{i + 1} \right| = \left| \frac{\zeta p_{i +1} - \zeta p_i}{q_{i + 1} - \zeta}\right| \leq \frac{2M|\zeta|}{|q_{i + 1} - \zeta|} \leq \frac{4M|\zeta|}{q_{i + 1}} \leq \frac{4M|\zeta|}{\Gamma},\]
which implies the following.
\begin{lemma}[Lemma 4.3 in \cite{chaika_ergodicity_2019}]
\label{lemma:nudging}
For any $f=p_1\chi_{[0,q_1)}+...+p_{m + 1}\chi_{[q_1+\dots +q_{m},1)} \in C_{m, M}$ and any $\zeta \in \R$ verifying $|\zeta| < \tfrac{1}{2}\min \{q_1, \dots, q_{m +1}\}$, 
\[ |\zeta| \leq |f - \textup{nudge}(f, i, \zeta)| \leq |\zeta| \max \left\{1, \frac{4M}{\min \{q_1, \dots, q_{m +1}\} } \right\}. \]
\end{lemma}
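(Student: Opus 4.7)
My plan is straightforward. Since $C_{m,M}$ carries the $\ell^\infty$ metric inherited from the parameter space $P_{m,M} \subset \R^{m+1} \times \R^{m+1}$, the distance $|f - \textup{nudge}(f,i,\zeta)|$ equals the largest coordinate-wise change of the parameter vector $(q_1,\ldots,q_{m+1},p_1,\ldots,p_{m+1})$. By the definition of the nudging operation, only three coordinates move: $q_i \mapsto q_i + \zeta$, $q_{i+1} \mapsto q_{i+1} - \zeta$, and $p_{i+1} \mapsto \frac{p_{i+1}q_{i+1} - \zeta p_i}{q_{i+1} - \zeta}$; every other $q_j$ and $p_j$ is left unchanged. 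So the proof reduces to bounding these three changes and then taking their maximum.

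For the lower bound I would simply observe that the first two changes have magnitude exactly $|\zeta|$, and the $\ell^\infty$ distance dominates the absolute value of any single coordinate change, which immediately yields $|f - \textup{nudge}(f,i,\zeta)| \geq |\zeta|$. For the upper bound, I would start from the direct algebraic simplification
\[
\frac{p_{i+1}q_{i+1} - \zeta p_i}{q_{i+1} - \zeta} - p_{i+1} = \frac{\zeta(p_{i+1} - p_i)}{q_{i+1} - \zeta},
\]
and then use $|p_j| \leq M$ (so $|p_{i+1} - p_i| \leq 2M$), together with the standing hypothesis $|\zeta| < \Gamma/2$, which forces $|q_{i+1} - \zeta| \geq q_{i+1} - |\zeta| \geq q_{i+1}/2 \geq \Gamma/2$. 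This bounds the third change in absolute value by $\tfrac{4M|\zeta|}{\Gamma}$. Taking the maximum of $|\zeta|$ and $\tfrac{4M|\zeta|}{\Gamma}$ over the three affected coordinates delivers the upper bound stated in the lemma.

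I do not anticipate any real obstacle here: the main algebraic identity and the estimate on the denominator are both essentially displayed in the paragraph immediately preceding the statement, and the proof amounts to organizing that computation and noting that the $q$-coordinate displacements alone witness the lower bound. The one small caveat worth flagging (but not pursuing, since the lemma makes no such claim) is that this distance estimate does not verify that $\textup{nudge}(f,i,\zeta)$ still lies in $C_{m,M}$; the constraints $\sum q_j = 1$ and $\sum p_j q_j = 0$ are preserved by a direct check (the formula for the new $p_{i+1}$ is precisely the one enforcing mean zero), but the bound $|p_{i+1}^{\text{new}}|\leq M$ could in principle fail and would require a separate argument if needed.
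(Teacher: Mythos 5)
Your proposal is correct and follows essentially the same route as the paper, which simply displays the algebraic identity $\frac{p_{i+1}q_{i+1}-\zeta p_i}{q_{i+1}-\zeta}-p_{i+1}=\frac{\zeta(p_{i+1}-p_i)}{q_{i+1}-\zeta}$ and the bound $\frac{2M|\zeta|}{|q_{i+1}-\zeta|}\le\frac{4M|\zeta|}{\Gamma}$ immediately before the lemma and notes that the lemma follows; your organization of that computation into the $\ell^\infty$ coordinate-wise comparison (with the $q$-displacements witnessing the lower bound) is exactly the intended argument.
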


\section{Proof of Theorem \ref{thm:main_thm}}
\label{sc:main_proof}

We will follow the strategy described in Section \ref{sc:scheme}. Let $T$ be an IET ergodic with respect to the Lebesgue measure, satisfying the conclusions of Theorem \ref{thm:good_returns}, for some $C, \delta > 0$. Notice that by Theorem \ref{thm:good_returns}, this is satisfied by a full-measure set of IETs.

 Suppose, for the sake of contradiction, that for a positive measure set $W$ of functions $f$ in $C_{m, M}$, the associated skew products $T_f$ are not ergodic. We assume WLOG that, for any $f \in W$, the group generated by $\sigma(f)$ is dense in $\R$ and that the conclusions of Theorem \ref{thm:good_returns} hold for $f$.
 
\subsection*{Step 1 - Construction and decomposition of a family of $T_f$-invariant measures} For any $f$ in an appropriate positive measure subset $V \subseteq W$, we will define a measure $\mu_f$ on $[0, 1) \times \R$, depending in a measurable way on $f$, such that
\begin{itemize}
\item $\mu_f$ is invariant by $T_f$;
\item $\mu_f \neq c\LebIR,$ for any $c \in \R \setminus \{0\}$;
\item $\mu_f \not\perp \LebIR$.
\end{itemize}

 Our construction relies on many well-known techniques and schemes, but it is rather technical. For the sake of clarity and completeness, we provide a complete proof of the existence of the measures described above in Proposition \ref{prop:measurable_dependence_measures} of Appendix \ref{appendix}.

\subsection*{Step 2 - Decompose the measures $\mu_f$ on the fibers}
For each $f \in V$, we decompose $\mu_f$ with respect to the partition of $[0, 1) \times \R$ in vertical lines and denote this family of measures by $\{\mu_{f,x}\}_{x\in [0, 1)}$, where $\mu_{f, x}$ is supported on $\{x\} \times \R$. Notice that the map $(f, x) \mapsto \mu_{f,x}$ is measurable with respect to $f$ and $x$. Indeed, it is formed by gluing the pieces obtained from standard disintegration over the first coordinate on the sets of the form $[0, 1)\times [N,N+1]$, $N\in\Z$. Moreover, since $\mu_f$ is $T_f$-invariant and by the uniqueness of this decomposition, 
\begin{equation}
\label{eq:invariance_decomposition}
\mu_{f, T^k(x)} = (T_f^k)_* (\mu_{f,x}),
\end{equation}
for a.e. $x$ and any $k \in \Z$. As an abuse of notation, in the following, we will treat the measures $\{\mu_{f,x}\}_{x\in [0, 1)}$ as measures on $\R$. With this convention, equation \eqref{eq:invariance_decomposition} becomes
\begin{equation}
\label{eq:invariance_decomposition_BS}
\mu_{f, T^k(x)} = (V_{S_kf(x)})_* (\mu_{f,x}),
\end{equation}
where $S_k f(x)$ denotes the $k$-th Birkhoff sum of $f$ with respect to $T$ and 
\begin{equation}
\label{eq:translation}
\Function{V_r}{\R}{\R}{t}{t + r},
\end{equation}
for any $r \in \R$.

We will assume WLOG that, for any $f \in V$,
\begin{itemize}
\item $\mu_{f, x}$ is not a multiple of $\Leb$, for a.e. $x \in [0, 1).$
\end{itemize}
Indeed, if there exists a positive measure set of points in $[0, 1)$ for which $\mu_{f, x} = c_x \Leb,$ for some constants $c_x \in \R$, then, by \eqref{eq:invariance_decomposition} and the ergodicity of $T$ with respect to the Lebesgue measure, it follows that $\mu_{f, x} = c_x \Leb$, for a.e. $x \in [0, 1)$. Since the function $x \mapsto c_x$ is $T$-invariant, by the ergodicity of $T$ with respect to the Lebesgue measure, there exists $c \in \R$ such that $c_x = c$, for a.e. $x \in [0, 1)$. Therefore, $\mu_f = c\LebIR$, which contradicts our initial assumption on $\mu_f$.

 Note that, since the map $(f,x)\mapsto\mu_{f,x}$ is measurable, the set $U = \{(f, x) \in V \times [0, 1) \mid \mu_{f, x} \neq c\Leb,\, \forall c \in \R \}$ is measurable as well.
\subsection*{Step 3 - Consider an appropriate continuity set for the (cutoffs of) the map $(f, x) \mapsto \mu_{f, x}$}
For $L>0$, let $\mu_{f,x,L}$ be the normalized cut-off of the measure of $\mu_{f, x}$ to $[-L, L]$. By Lusin's Theorem, there exists a compact set $K \subseteq U$ of positive measure such that, for any $D \in \Q \cap (0, +\infty)$,
\begin{itemize}
\item $\displaystyle\begin{aligned}[t] \Function{\Phi_D}{K \subseteq U}{\mathcal{M}_L}{(f, x)}{\mu_{f,x,L}} \end{aligned}$ is continuous,
\end{itemize}
where $\mathcal{M}_D$ denotes the space of probability measures on $[-L, L].$ 

{ By the Lebesgue density theorem, we may assume WLOG that every point in $K$ is a density point of $K$. Moreover, we may assume that every point in $K$ is a \emph{fiber-wise density point }, that is, for any $(x, f) \in K$, denoting \[ E_f := \{y \in [0, 1) \mid (f, y) \in K\}, \quad P_x := \{ g \in C_{m,M} \mid (g, x) \in K\}, \]
we may assume that $x$ (resp. $f$) is a density point of $E_f$ (resp. $P_x$). { 
Indeed, we have the following fact. 
\begin{lemma}
    For every measurable set $A\in\R^k\times \R^m$ of positive Lebesgue measure and almost every point $(x,y)\in A$, the points $x\in\R^k$ and $y\in \R^m$ are density points w.r.t. Lebesgue measures on corresponding fibers.
\end{lemma}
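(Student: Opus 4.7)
The plan is to deduce the statement from the one-dimensional (more precisely, $\R^k$- and $\R^m$-dimensional) Lebesgue density theorem applied fiber-by-fiber, together with Fubini's theorem. Adopt the notation $A^y := \{x \in \R^k \mid (x,y) \in A\}$ for the horizontal fiber over $y$, and $A_x := \{y \in \R^m \mid (x,y) \in A\}$ for the vertical fiber over $x$. By Fubini's theorem, $A^y$ is Lebesgue measurable for a.e. $y \in \R^m$, and $A_x$ is Lebesgue measurable for a.e. $x \in \R^k$.

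First I would tackle the horizontal direction. Let
\[
B := \big\{(x, y) \in A \mid A^y \text{ is measurable and } x \text{ is a density point of } A^y \text{ in } \R^k \big\}.
\]
For each $y$ with $A^y$ measurable, the classical Lebesgue density theorem on $\R^k$ yields that a.e. point of $A^y$ is a density point of $A^y$; equivalently, the slice $B \cap (\R^k \times \{y\})$ differs from $A \cap (\R^k \times \{y\})$ by a Lebesgue-null set in $\R^k$. Provided $B$ is Lebesgue measurable, Fubini immediately gives that $B$ has the same measure as $A$. Symmetrically, defining
\[
C := \big\{(x, y) \in A \mid A_x \text{ is measurable and } y \text{ is a density point of } A_x \text{ in } \R^m \big\},
\]
one obtains that $C$ has the same measure as $A$. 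Hence $B \cap C$ has full measure in $A$, which is exactly the claim.

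The one step that needs a bit of care is the measurability of $B$ (and, by symmetry, of $C$). One way to handle this is to reduce to a bounded situation by intersecting with a large cube and then, for each $r > 0$, consider the function
\[
\varphi_r(x, y) := \frac{\mathrm{Leb}_{\R^k}\big(A^y \cap B_{\R^k}(x, r)\big)}{\mathrm{Leb}_{\R^k}\big(B_{\R^k}(x, r)\big)},
\]
which is jointly measurable in $(x, y)$ by Fubini applied to the indicator of $A$. Then $B = A \cap \{(x,y) \mid \lim_{r \to 0^+} \varphi_r(x, y) = 1\}$, and taking the limit along a countable sequence $r_n \to 0^+$ produces a measurable set that contains the set of true density points, differing from it by a set of null measure in each horizontal slice. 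This is enough to run Fubini as above. I expect this measurability bookkeeping to be the only mild obstacle; the core argument is simply one-dimensional Lebesgue density plus Fubini.
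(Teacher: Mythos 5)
Your proof is correct and follows essentially the same route as the paper: apply the Lebesgue density theorem fiber by fiber, integrate via Fubini, and justify measurability of the set of fiber-wise density points through the jointly measurable density-ratio function $\varphi_r$ and a (lim)inf over a countable set of radii. The paper phrases the integration step in the language of disintegration and writes the ratio as $F_\epsilon(x,y)=\tfrac{1}{2\epsilon}\lambda_y(x-\epsilon,x+\epsilon)$ with $\tilde A=F^{-1}(\{1\})$ for $F=\liminf_{\epsilon\to 0}F_\epsilon$, but this is the same argument.
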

\begin{proof}
    We prove the lemma for $n=m=1$; the other cases follow analogously. Moreover, by lower regularity of the Lebesgue measure, we may assume that $A$ is compact, and to simplify the notation, we assume that $A\subset[0,1]^2$. We will show that for a.e. $(x,y)\in A$, the point $x$ is a density point w.r.t. the Lebesgue measure on $[0,1]\times\{y\}$, the other case being treated identically.

    Consider the measure
    \[
\lambda:=Leb|_{A}\quad\text{on}\ [0,1]^2.
    \]
By the Disintegration Theorem, there exists a measurable assignment $y\mapsto\lambda_{y}$ such that 
\[
\lambda=\int_0^1\lambda_y\,d\lambda',
\]
where $\lambda'$ is a projection of $\lambda$ on $y$-coordinate. Moreover, by the definition of $\lambda$, $\lambda_y:=Leb_{[0,1]\times \{y\}}$ for$\lambda'$-a.e. $y$. We know by the Lebesgue density theorem that for $\lambda'$-a.e. $y$,  $\lambda_y$-a.e. $x$ is a Lebesgue density point for the set $A\cap[0,1]\times \{y\}$ (w.r.t. the Lebesgue measure on $[0,1]\times\{y\}$). Hence, by Fubini's Theorem, to conclude the proof it is enough to show that the set of points
\[
\tilde A:=\{(x,y)\in A\mid \text{$x$ is a Lebesgue density point for the set $A\cap[0,1]\times \{y\}$}\}
\]
is measurable.

Define a map $F_\epsilon:A\to [0,1]$ in the following way 
\[
F_{\epsilon}(x,y):=\frac{1}{2\epsilon}\lambda_{y}(x-\epsilon,x+\epsilon).
\]
Note that $F_{\epsilon}$ is measurable since it is a composition of two measurable maps. Then let
\[
F(x,y):=\liminf_{\epsilon\to 0}F_{\epsilon}(x,y).
\]
As a lower limit of measurable functions, $F$ is also measurable. To finish the proof, it remains to notice that $\tilde A=F^{-1}(\{1\})$.
\end{proof}
}

Uniform fiber-wise density estimates (for positive measure subsets of $K$) will prove very helpful later in the proof. For any subset $G \subseteq K$ and any $f \in C_{m, M}$, we denote
\[ E_f^G := \{y \in [0, 1) \mid (f, y) \in G\}.\]

By Egorov's theorem, together with the Lebesgue density theorem, we can find a positive measure subset $B_V \subseteq K$ such that:
\begin{itemize}
\item For any $0 < \xi < 1$, there exists $N_\xi \in \N$ such that,  for any $n>N_\xi$ and any $(f, x) \in B_V$,
\begin{equation}
\label{eq:quantitative_density_1}
 \frac{\textup{Leb}_{C_{m,M}}\{g\in P_x \mid d(f,g)<\su{n}\}}{\textup{Leb}_{C_{m,M}}\{g\in C_{m,M} \mid d(f,g)<\su{n}\}}> \xi. 
\end{equation}
\end{itemize}

In the same way as before, we may assume WLOG that every point in $B_V$ is a fiber-wise density point of $B_V.$ Thus, again by Egorov's theorem together with the Lebesgue density theorem, we can find a positive measure subset $B_H \subseteq B_V$ such that:

\begin{itemize}
\item For any $0 < \xi < 1$, there exists $N_\xi \in \N$ such that,  for any $n>N_\xi$ and any $(f, x) \in B_H$,
\begin{equation}
\label{eq:quantitative_density_2}
\frac{\textup{Leb}_{[0, 1)}\{y\in E_f^{B_V} \mid |x - y| <\su{n}\}}{\tfrac{2}{n}}> \xi.
\end{equation}
\end{itemize}

Similarly, we can find a positive measure subset $B \subseteq B_H$ such that:

\begin{itemize}
\item For any $0 < \xi < 1$, there exists $N_\xi \in \N$ such that,  for any $n>N_\xi$ and any $(f, x) \in B$,
\begin{equation}
\label{eq:quantitative_density_3}
\frac{\textup{Leb}_{[0, 1)}\{y\in E_f^{B_H} \mid |x - y| <\su{n}\}}{\tfrac{2}{n}}> \xi.
\end{equation}
 \end{itemize}
We assume WLOG that every point in $B$ is a fiber-wise density point of $B$.
}

\subsection*{Step 4 - Find good returns to the set $E_f$} Fix $(f, x) \in B$ and $D > mM$. We can define sequences {$(x_k)_{k \geq 1} \subseteq E_f^{B_H}$} and $(r_k)_{k \geq 1} \subseteq \N$, with $r_k \nearrow +\infty$, such that:

\begin{enumerate}
\item $x_k,$ $T^{r_k}(x_k) \in { E_f^{B_H}} \cap \big(x - \tfrac{1}{k}, x + \tfrac{1}{k}\big)$,
\item $\big|S_{r_k}f(x_k)\big| < D,$
{ \item $\{T^i(x_k)\}_{i = 0}^{r_k - 1}$ is $\tfrac{C'}{r_k}$-dense in $[0, 1)$,
\item $\big(x_k- \tfrac{\sigma'}{r_k}, x_k + \tfrac{\sigma'}{r_k}\big)$ is a continuity interval for $T^{r_k}$,}
\end{enumerate}
{where $C' := C + 1,$ $\sigma' := \tfrac{\sigma}{4}$ and $C, \sigma > 0$ are the constants for which the conclusions of Theorem \ref{thm:good_returns} hold. 

Indeed, let $N_0 \geq 1$. For any $k \geq 1$, Theorem \ref{thm:good_returns} applied to {$E_k = E_f^{B} \cap \big(x - \tfrac{1}{2k}, x + \tfrac{1}{2k}\big)$} with $N = 2k + N_0$ yields a point $y_k \in {E_f^{B}}\cap \big(x - \tfrac{1}{2k}, x + \tfrac{1}{2k}\big)$ and a natural number $r_k > N$ such that $y_k, T^{r_k}(y_k) \in E_k$, $\big|S_{r_k}f(y_k)\big| < D$, $\{T^i(y_k)\}_{i = 0}^{r_k - 1}$ is $\tfrac{C}{r_k}$-dense in $[0, 1)$, and either $\big[y_k - \tfrac{\sigma}{r_k}, y_k\big]$ or $\big[y_k, y_k + \tfrac{\sigma}{r_k}\big]$ is a continuity interval of $T^{r_k}$. Let $J_k$ denote this continuity interval. 

By \eqref{eq:quantitative_density_3}, up to taking $N_0$ sufficiently large, we have
\begin{equation}
\label{eq:big_intersection}
|J_k \cap { E_f^{B_H}}|, |T^{r_k}(J_k) \cap { E_f^{B_H}}| > \tfrac{3|J_k|}{4}.
\end{equation}
Notice that, since $J_k, T^{r_k}(J_k) \subseteq \big(x - \tfrac{1}{k}, x + \tfrac{1}{k}\big)$, we have
\[J_k \cap { E_f^{B_H}} = J_k \cap { E_f^{B_H}} \cap \big(x - \tfrac{1}{k}, x + \tfrac{1}{k}\big)\qquad T^{r_k}(J_k) \cap { E_f^{B_H}} = T^{r_k}(J_k) \cap { E_f^{B_H}} \cap \big(x - \tfrac{1}{k}, x + \tfrac{1}{k}\big),\]
and thus equation \eqref{eq:big_intersection} yields
\[ |J_k \cap { E_f^{B_H}} \cap \big(x - \tfrac{1}{k}, x + \tfrac{1}{k}\big) \cap T^{-r_k}({ E_f^{B_H}} \cap \big(x - \tfrac{1}{k}, x + \tfrac{1}{k}\big))| > \tfrac{|J_k|}{2}.\]
Therefore, there exists $x_k \in \tfrac{1}{2}J_k$, where $ \tfrac{1}{2}J_k$ denotes the centered interval inside $J_k$ of length $\tfrac{1}{2}|J_k|$, satisfying the first two assertions above. 

Since $\{T^i(x_k)\}_{i = 0}^{r_k - 1}$ is simply a translation of $\{T^i(y_k)\}_{i = 0}^{r_k - 1}$ by at most $\tfrac{\sigma}{r_k}$, $x_k$ verifies the third assertion. Finally, noticing that $\big(x_k- \tfrac{\sigma}{4r_k}, x_k + \tfrac{\sigma}{4r_k}\big) \subseteq J_k$, it follows that $x_k$ also verifies the fourth assertion.
}

\subsection*{Step 5 - Construct appropriate perturbations of $f$} We will define $\xi_1 > \xi_2 > 0$ and sequences $(g_{i, k})_{k \geq 1} \subseteq C_{m, M}$ of perturbations of $f$, for $i = 1, \dots, m$, such that, for any $k \geq 1$ and any $i = 1, \dots, m,$
\begin{enumerate}
\item $|g_{i, k} - f| < \frac{\xi_1}{r_k}.$ 
\item For any $g \in C_{m, M}$ verifying $|g - g_{i, k}| < \frac{\xi_2}{r_k},$ the $i$-th discontinuity of $g$ belongs to the Rokhlin tower $\bigsqcup_{j = 0}^{r_k - 1}T^j\big(x_k- \tfrac{\sigma'}{2r_k}, x_k + \tfrac{\sigma'}{2r_k}\big)$. Moreover, no other discontinuity of $h$ belongs to $\bigsqcup_{j = 0}^{r_k - 1}T^j\big(x_k- \tfrac{3\sigma'}{4r_k}, x_k + \tfrac{3\sigma'}{4n_k}\big).$
\end{enumerate}
We will do it by nudging, as defined in Lemma \ref{lemma:nudging}. This will allow us to control the relative position of the discontinuities inside the chosen Rokhlin towers.


Fix $1 \leq i \leq m$. By nudging $f$ using Lemma \ref{lemma:nudging}, let us construct a function $g_{i, k} \in C_{m,M}$ such that its $i$-th discontinuity belongs to $\bigsqcup_{j = 0}^{r_k - 1}T^j\big(x_k- \tfrac{\sigma'}{4r_k}, x_k + \tfrac{\sigma'}{4r_k}\big)$ and no other of its discontinuities belong to $\bigsqcup_{j = 0}^{r_k - 1}T^j\big(x_k- \tfrac{4\sigma'}{5r_k}, x_k + \tfrac{4\sigma'}{5r_k}\big).$

Recall that any $f \in C_{m, M}$ is identified with a vector $(q, p) \in \R^{m + 1} \times \R^{m + 1}$. Let us denote $\Gamma:= \min \{q_1, \dots, q_{m +1}\}$. By nudging $f$ at most $m$ times by a distance of at most $\tfrac{4\sigma'}{5r_k}$ (which we assume WLOG to be smaller than $\tfrac{\Gamma}{4}$), we can define a function $\overline{g}_k \in C_{m, M}$ having no discontinuities in $\bigsqcup_{j = 0}^{r_k - 1}T^j\big(x_k- \tfrac{4\sigma'}{5r_k}, x_k + \tfrac{4\sigma'}{5r_k}\big)$ and such that $|f - \overline{g}_k| \leq \tfrac{4m\sigma'}{5r_k}\tfrac{{4M + 1}}{\Gamma}$. Indeed, we can move the location of any discontinuity of $f$ in an interval of the form $T^j\big(x_k- \tfrac{4\sigma'}{5r_k}, x_k + \tfrac{4\sigma'}{5r_k}\big)$ to $T^j\big(x_k- \tfrac{\sigma'}{r_k}, x_k + \tfrac{\sigma'}{r_k}\big) \setminus T^j\big(x_k- \tfrac{4\sigma'}{5r_k}, x_k + \tfrac{4\sigma'}{5r_k}\big) $.

Since $\{T^i(x_k)\}_{i = 0}^{r_k - 1}$ is $\tfrac{C'}{r_k}$-dense in $[0, 1)$, it follows that $[0, 1) \setminus \bigsqcup_{j = 0}^{r_k - 1}T^j\big(x_k- \tfrac{\sigma'}{4r_k}, x_k + \tfrac{\sigma'}{4r_k}\big)$ is a finite union of closed intervals of length at most $\tfrac{C'}{r_k}.$ Hence, we can define a function $g_{i, k} \in C_{m, M}$ whose $i$-th discontinuity belongs to $ \bigsqcup_{j = 0}^{r_k - 1}T^j\big(x_k- \tfrac{\sigma'}{4r_k}, x_k + \tfrac{\sigma'}{4r_k}\big)$ and verifies $|\overline{g}_k - g_{i, k}| \leq \tfrac{C'}{r_k}\tfrac{{4M + 1}}{\Gamma}$ by moving the location of the $i$-th discontinuity of $\overline{g}_k$ by a distance of at most $\tfrac{C'}{r_k}$ (which we again assume WLOG to be smaller than $\tfrac{\Gamma}{4}$) to the closest interval in the Rokhlin tower $ \bigsqcup_{j = 0}^{r_k - 1}T^j\big(x_k- \tfrac{\sigma'}{4r_k}, x_k + \tfrac{\sigma'}{4r_k}\big).$ Moreover, we have
\[ |f - g_{i, k}| \leq |f - \overline{g}_k| + |\overline{g}_k - g_{i, k}| \leq \frac{\xi_1}{2 r_k},\]
where {$\xi_1 = \tfrac{2(m\sigma' + C)(4M + 1)}{\Gamma}$.}

Finally, setting $\xi_2 = \tfrac{\sigma'}{20}$, for any $h \in C_{m, M}$ verifying $|h - g_{i, k}| < \frac{\xi_2}{r_k}$ and any $1 \leq j \leq m$, the distance between the $j$-th discontinuities of $h$ and $g_{i, k}$ is at most $\frac{\sigma'}{20 h_{k}}.$ Hence, for any such $h$, only the $i$-th discontinuity of $h$ belongs to the set $\bigsqcup_{j = 0}^{r_k - 1}T^j\big(x_k- \tfrac{\sigma'}{2r_k}, x_k + \tfrac{\sigma'}{2r_k}\big).$

\subsection*{Step 6 - Define sequences $(f_{i, k}, x_k^{\pm}) \in K$ converging to $(f, x)$ for which $S_{r_k}f_{i, k}(x_k^{\pm})$ are bounded and differ by $\sigma'_i(f_{i, k})$} We now find two points $x^-_k$ and $x^+_k$, to the left and the right of $x_k$, respectively, such that, roughly speaking, the fiber measures corresponding to $x^+_k$ and $x^-_k$ are close to the fiber measure corresponding to $x_k$ and yet differ via the shift $V_{\sigma_i}$ (see \eqref{eq:translation}) given by the jump discontinuity. 

By \eqref{eq:quantitative_density_2}, for $k$ sufficiently large,
\[ \min\left\{\mu\big\{y \in { E_f^{B_V}} \,\big|\, |x_k - y| <\tfrac{\sigma'}{r_k}\big\}, \mu\big\{y \in { E_f^{B_V}} \,\big|\, |T^{r_k}(x_k) - y| <\tfrac{\sigma'}{r_k}\big\}\right\} > \tfrac{7}{8}\tfrac{2\sigma'}{r_k}.\]
Hence
\[ \mu\left\{\left.y \in \big(x_k - \tfrac{3\sigma'}{4r_k}, x_k - \tfrac{\sigma'}{2r_k}\big) \,\right|\, y \in { E_f^{B_V}} \text{ and } T^{r_k}(y) \in { E_f^{B_V}} \right\} > 0,\]
\[ \mu\left\{\left.y \in \big(x_k + \tfrac{\sigma'}{2r_k}, x_k + \tfrac{3\sigma'}{4r_k}\big) \,\right|\, y \in { E_f^{B_V}} \text{ and } T^{r_k}(y) \in { E_f^{B_V}} \right\} > 0.\]
Thus, there exist $x^-_k \in \big(x_k - \tfrac{3\sigma'}{4r_k}, x_k - \tfrac{\sigma'}{2r_k}\big)$ and $x^+_k \in \big(x_k + \tfrac{\sigma'}{2r_k}, x_k + \tfrac{3\sigma'}{4r_k}\big)$ such that 
\[(f, x_k^{\pm}), (f, T^{r_k}(x^{\pm}_k)) \in {B_V}. \]
Similarly, by \eqref{eq:quantitative_density_1} and for $k$ sufficiently large,
\[ \frac{\min\left\{\mu\big\{g \in P_{x_k^{\pm}}\,\big|\, |g - f| <\tfrac{\xi_1}{r_k}\big\}, \mu\big\{g \in P_{T^{r_k}(x_k^{\pm})}\,\big|\, |g - f| <\tfrac{\xi_1}{r_k}\big\} \right\}}{\mu\big\{g \in C_{m, M}\,\big|\, |g - f| <\tfrac{\xi_1}{r_k}\big\}} > 1-\tfrac{\xi_2^m}{8\sqrt{m+1}}.
\]

Hence, {noticing that $\xi_2 < \tfrac{\xi_1}{2}$ and recalling that $|f - g_{i, k}| < \tfrac{\xi_1}{2}$}, there exists $f_{i,k} \in C_{m, M}$ such that
\[ |f_{i, k} - g_{i, k}| < \tfrac{\xi_2}{r_k}, \qquad (f_{i, k}, x_k^{\pm}), (f_{i, k}, T^{r_k}(x_k^{\pm})) \in {K}.\]
Notice that by the second assertion in Step 5, we have
\[ S_{r_k}f_{i, k}(x_k^{-}) - S_{r_k}f_{i, k}(x_k^{+}) = \sigma_i(f_{i, k}).\]
Moreover, by the second assertion in Step 4 and since $|f_{i, k} - f| < \tfrac{\xi_1}{r_k}$,
\[ \left|S_{r_k}f_{i, k}(x_k^{\pm}) \right| < D + \xi_1 {+M}.\]

\subsection*{Step 7 - Use the measures $\mu_{f_{i, k}, x_k^{\pm}}$ to conclude that $(V_{\sigma_i(f)})_*(\mu_{f, x}) = \mu_{f, x}$} In the following, let $i \in \{1, \dots, m\}$ be fixed. Notice that to prove $(V_{\sigma_i(f)})_*(\mu_{f, x}) = \mu_{f, x}$, {where $V_{\sigma_i(f)}$ is as in \eqref{eq:translation},} it is sufficient to show that 
\begin{equation}
\label{eq:invariance_interval}
(V_{\sigma_i(f)})_*(\mu_{f, x})(J) = \mu_{f, x}(J),
\end{equation}
for any open bounded interval $J$. 

Recall that by definition of the set {$K$} and by the constructions of the sequences in the previous sections, we have, for any $L > 0$,
\begin{equation}
\label{eq:weak_convergence}
\mu_{f_{i, k}, x_k^{\pm}, L},\, \mu_{f_{i, k}, T^{r_k}(x_k^{\pm}), L} \xrightarrow[k \to \infty]{w} \mu_{f, x, L}.
\end{equation}

Let $J = (a, b)$ be an open bounded interval and let $L > 2\max\{|a|, |b|\} + D + \xi_1 + M +1 $. By taking a subsequence, if necessary, we may assume that $S_{r_k}f_{i, k}(x_k^{-})$ converges, as $k \to \infty$. Let us denote its limit by $t$. Denote, 
\[ J_\epsilon = (a - \epsilon, b + \epsilon),\]
for any $-\tfrac{(b - a)}{2} < \epsilon < \tfrac{(b - a)}{2}.$ Notice that for any $\epsilon$ as before, we have
\[ J_\epsilon + t \subseteq [-L, L].\]
By \eqref{eq:invariance_decomposition_BS}, for $0 < \epsilon < \tfrac{(b - a)}{2}$ sufficiently small,
\begin{align*}
\mu_{f_{i, k}, T^{r_k}(x_k^{\pm}), L}(\overline{J_{-\epsilon}} - t) & \geq \mu_{f_{i, k}, T^{r_k}(x_k^{\pm}), L}(J_{-\epsilon} - t) \\ & = \mu_{f_{i, k}, x_k^{\pm}, L}(J_{-\epsilon} - t + S_{r_k}f_{i, k}(x_k^{\pm})) \\& \geq \mu_{f_{i, k}, x_k^{\pm}, L}(J_{-2\epsilon} + \tfrac{\sigma_i(f)}{2} \pm \tfrac{\sigma_i(f)}{2}).
\end{align*}
By Portmanteau's Theorem and \eqref{eq:weak_convergence}, taking the limit as $k \to \infty$ in the previous expression yields
\[ \mu_{f, x, L}(\overline{J_{-\epsilon}} - t) \geq \mu_{f, x, L}(J_{-2\epsilon} + \tfrac{\sigma_i(f)}{2} \pm \tfrac{\sigma_i(f)}{2}). \]
By taking the limit as $\epsilon \searrow 0$, we obtain
\[ \mu_{f, x, L}(J - t) \geq \mu_{f, x, L}(J + \tfrac{\sigma_i(f)}{2} \pm \tfrac{\sigma_i(f)}{2}). \]
Similarly, by \eqref{eq:invariance_decomposition_BS} and for $0 < \epsilon < \tfrac{(b - a)}{2}$ sufficiently small,
\begin{align*}
\mu_{f_{i, k}, T^{r_k}(x_k^{\pm}), L}(J_{-2\epsilon} - t) & = \mu_{f_{i, k}, x_k^{\pm}, L}(J_{-2\epsilon} - t + S_{r_k}f_{i, k}(x_k^{\pm})) \\& \leq \mu_{f_{i, k}, x_k^{\pm}, L}(\overline{J_{-\epsilon}} + \tfrac{\sigma_i(f)}{2} \pm \tfrac{\sigma_i(f)}{2}).
\end{align*}
As before, taking limits first as $k \to \infty$ and then as $\epsilon \searrow 0$, we obtain
\[ \mu_{f, x, L}(J - t) \leq \mu_{f, x, L}(J + \tfrac{\sigma_i(f)}{2} \pm \tfrac{\sigma_i(f)}{2}). \] 
Therefore, 
\[ \mu_{f, x, L}(J - t) = \mu_{f, x, L}(J + \tfrac{\sigma_i(f)}{2} \pm \tfrac{\sigma_i(f)}{2}). \] 
In particular, 
\[ \mu_{f, x}(J + \sigma_i(f)) = \mu_{f, x, L}(J + \sigma_i(f)) = \mu_{f, x, L}(J) = \mu_{f, x}(J), \]
which shows \eqref{eq:invariance_interval}.

\subsection*{Step 8 - Conclude the proof} Since the set of jumps of $f$ generates a dense group in $\R$ and, by the previous step, we have 
\[ (V_{\sigma_i(f)})_*(\mu_{f, x}) = \mu_{f, x}, \]
for any $i = 1, \dots, m$, it follows that $\mu_{f, x}$ is a multiple of the Lebesgue measure on $\R$, which contradicts our initial assumption on the measure $\mu_{f , x}$.

\section{Proof of Theorem \ref{thm:good_returns}}
\label{sc:diophantine_condition}

Throughout this section, we fix $d, m\geq 2$, $M > 0,$ and a Rauzy class $\mathfrak{R}$. We denote by $\theta_1$ the largest Lyapunov exponent of $\mathcal{Z}$ when restricted to $\mathfrak{R} \times \Lambda^\A$. If $d > 2$, we denote by $\theta_2$ the second largest Lyapunov exponent of $\mathcal{Z}$ when restricted to $\mathfrak{R} \times \Lambda^\A$. Otherwise, we let $\theta_2 = 0$. Notice that in both cases, $0 \leq \theta_2 < \theta_1$.

First, it will be crucial for us to have control over the maximum possible growth of Birkhoff sums of a given piecewise constant cocycle over a given IET. This will be a direct consequence of the following well-known result due to Zorich \cite[Theorem 1]{zorich_deviation_1997}.

\begin{theorem}
\label{thm:deviation_classical}
For a.e. IET $T = (\pi, \lambda) \in \mathfrak{R} \times \Lambda^\A$ 
\[ \max_{x \in [0, 1)} \limsup_{n \to \infty} \tfrac{\log|\chi_\alpha(x, n) - \lambda_\alpha n|}{\log n} = \frac{\theta_2}{\theta_1},\]
where $\chi_\alpha(x, n)$ is the number of visits to $I_\alpha$ of the first $n$ iterates of $x$ by $T$. Moreover, given $\varsigma > 0$ there exists $N(T, \varsigma) \in \N$ such that for any $n \geq N$, 
\[ \max_{x \in [0, 1)} |\chi_\alpha(x, n) - \lambda_\alpha n| \leq n^{\frac{\theta_2}{\theta_1} + \varsigma}.\]
\end{theorem}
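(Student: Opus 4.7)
The key observation is that $\chi_\alpha(x, n) - \lambda_\alpha n$ is precisely the Birkhoff sum $S_n \varphi_\alpha(x)$ of the mean-zero piecewise constant function $\varphi_\alpha := \chi_{I_\alpha} - \lambda_\alpha$. Hence the problem reduces to controlling Birkhoff sums of such step functions, and Rauzy-Veech induction together with the Kontsevich-Zorich cocycle are the natural tools for this task.

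My first step would be to handle the special times $n = q^{(k)}_\beta$ corresponding to return times of $I^{(k)}_\beta$ to $I^{(k)}$. For $x \in I^{(k)}_\beta$, the orbit segment $\{T^i(x)\}_{i=0}^{q^{(k)}_\beta - 1}$ visits each floor of the Rokhlin tower over $I^{(k)}_\beta$ exactly once, so the number of iterates landing in $I_\alpha$ is an entry of the cocycle matrix $Q^{(k)}$ (encoding tower-to-interval visits, as in the discussion surrounding \eqref{eq: rellambdamatrixacceleration}). Consequently
\[ S_{q^{(k)}_\beta} \varphi_\alpha(x) = (Q^{(k)})_{\beta\alpha} - \lambda_\alpha \, q^{(k)}_\beta, \]
which measures how far the $\beta$-th row of $Q^{(k)}$ deviates from the direction of $\lambda$. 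Since $B^{(k)}\lambda = \lambda^{(k)}$ contracts at the exponential rate $\theta_1$ and $B$, $Q$ are dual cocycles, the top Oseledets direction of $Q$ is spanned by the height vector and its complementary invariant subspace grows at rate at most $\theta_2$. Applying Oseledets's theorem to $\mathcal{Z}$ (ergodic, with $\log\|Q\|$ integrable by \eqref{eq: integrability}) one obtains $|S_{q^{(k)}_\beta}\varphi_\alpha(x)| \le e^{k(\theta_2 + \varsigma')}$ while $q^{(k)}_\beta$ grows like $e^{k\theta_1}$, yielding the exponent $\theta_2/\theta_1$ along this subsequence of times.

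To pass from tower return times to an arbitrary $n$, I would use a Rokhlin-tower (Ostrowski-type) decomposition: the orbit segment of length $n$ starting at $x$ can be partitioned into $O(\log n)$ complete tower pieces coming from Rauzy-Veech induction at successively lower levels, so that
\[ S_n \varphi_\alpha(x) = \sum_{j} S_{q^{(k_j)}_{\beta_j}} \varphi_\alpha(y_j), \]
with each $q^{(k_j)}_{\beta_j} \le n$. Summing the bounds from the previous step and absorbing the logarithmic factor into $\varsigma$ gives the advertised estimate $|S_n \varphi_\alpha(x)| \le n^{\theta_2/\theta_1 + \varsigma}$.

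The main obstacle I anticipate is upgrading this to an estimate that is \emph{uniform in $x$} and \emph{effective in $n$}. Uniformity in $x$ is essentially free, because the Oseledets estimate for $S_{q^{(k)}_\beta}\varphi_\alpha$ is constant on the base $I^{(k)}_\beta$ and the decomposition above depends on $x$ only through the combinatorial data $(k_j,\beta_j)$. Upgrading from a.e. convergence to an explicit rate is the delicate point: I would invoke Egorov's theorem on the set of generic IETs together with the balanced-tower control of Lemma \ref{lem: heigthsandmatrices} (ensuring bounded distortion between the heights $q^{(k)}_\alpha$ and $q^{(k)}_\beta$ along a positive-measure set of levels). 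The matching lower bound, needed for the equality $= \theta_2/\theta_1$, would be obtained by exhibiting points $x$ and subsequences of return times along which $S_n\varphi_\alpha(x)$ actually realizes the second Lyapunov exponent, using the non-degeneracy of the Oseledets splitting of $Q$ on the orthogonal complement of the top direction.
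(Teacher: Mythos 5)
The paper does not prove this statement at all: it is quoted verbatim as a known result of Zorich (\cite[Theorem 1]{zorich_deviation_1997}), and only the upper bound in the ``moreover'' part is ever used (to deduce Theorem \ref{thm:deviations}). So there is no in-paper argument to compare against; what you have written is a reconstruction of Zorich's original proof, and the key ingredients are the right ones: the identification $\chi_\alpha(x,n)-\lambda_\alpha n = S_n(\chi_{I_\alpha}-\lambda_\alpha)(x)$, the fact that special Birkhoff sums over complete Rokhlin towers are read off from entries of the cocycle matrices, the Oseledets splitting of the (integrable, by \eqref{eq: integrability}) Zorich cocycle with the spectral gap $\theta_2<\theta_1$, and the decomposition of an arbitrary orbit segment into complete tower pieces from finitely many renormalization levels. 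Your remark that uniformity in $x$ comes for free because the estimate at each level depends only on the combinatorial data $(k_j,\beta_j)$ is also the correct mechanism for upgrading $\max_x\limsup_n$ to the effective $\limsup_n\max_x$ form of the second display.

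Two points in your sketch would need genuine work in a full write-up. First, the orbit segment of length $n$ is \emph{not} partitioned into $O(\log n)$ complete tower pieces: it uses $O(\log n)$ \emph{levels}, but the number of pieces at level $j$ is controlled only by the column sums of the one-step matrix $B(k_j,k_j+1)$, which for the Zorich acceleration are integrable but not uniformly bounded. One must invoke the subexponential growth $\log\|B(k,k+1)\|=o(k)$ a.s.\ (a Borel--Cantelli consequence of integrability) to see that the total number of pieces, and hence the accumulated error, is $n^{o(1)}$; only then can the correction be absorbed into $\varsigma$. Second, the assertion that the deviation vector lives in the Oseledets subspace of exponents $\le\theta_2$ is exactly where the mean-zero normalization and the duality between $B$ and $Q$ enter, and it deserves an explicit computation (the top direction of $Q$ is the height/length pairing, and $\langle e_\alpha-\lambda_\alpha\mathbb{1},\lambda\rangle=0$ is what kills the $\theta_1$ component). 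The matching lower bound needed for the equality is the least standard part of Zorich's theorem and your sketch only gestures at it; for the purposes of this paper you could safely state and use only the inequality $\le$.
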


By considering IETs with $m$ marked points in the theorem above, one can easily deduce the following.

\begin{theorem}[Deviation of ergodic averages]
\label{thm:deviations}
For a.e. IET $T = (\pi, \lambda) \in \mathfrak{R} \times \Lambda^\A$ there exists a full-measure set $\mathcal{F} \subseteq C_{m, M}$ such that, for any $f \in \mathcal{F}$ and any $\varsigma > 0$, there exists $C = C(T, f, \varsigma) > 0$ satisfying
\[ \max_{x \in [0, 1)} \left| S_n^T f(x)\right| \leq C n^{\frac{\theta_2}{\theta_1} + \varsigma},\]
for any $n \geq 1$.
\end{theorem}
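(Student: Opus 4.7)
The plan is to reduce the theorem to Theorem \ref{thm:deviation_classical} by viewing the cocycle $f$ as encoding $m$ additional marked points on the original IET. Concretely, given $T = (\pi, \lambda) \in \mathfrak{R} \times \Lambda^\A$ and $f \in C_{m, M}$ with discontinuities at $p_1, p_1+p_2, \ldots, p_1+\ldots+p_m$, the first step is to form the refined IET $T'$ obtained by adding these $m$ points to the partition of $T$. For a full-measure set of parameters $(p, q)$, these points do not coincide with the $d-1$ discontinuities of $T$, and $T'$ becomes a bona fide IET on $d + m$ intervals indexed by a new alphabet $\A' \supseteq \A$.

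The next step is to identify the Lyapunov exponents of the Rauzy class $\mathfrak{R}'$ of $T'$ with those of $\mathfrak{R}$. The key observation is that $\mathfrak{R}'$ corresponds to the same stratum of translation surfaces as $\mathfrak{R}$, with $m$ additional regular marked points. Since the genus $g$ of the underlying surface is unchanged, the non-zero Lyapunov exponents of the Zorich cocycle for $\mathfrak{R}'$ coincide with those of $\mathfrak{R}$; in particular the ratio $\theta_2/\theta_1$ is preserved (recall that $\theta_2 = 0$ in the absence of a second positive exponent, and adding marked points only introduces further zero exponents). This is the step I expect to be the most delicate, as it relies on the fact that the extra relative cohomology introduced by the marked points contributes only zero Lyapunov exponents.

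With this in hand, applying Theorem \ref{thm:deviation_classical} to $T'$ yields, for any $\varsigma > 0$, a bound $\max_x |\chi'_\gamma(x, n) - \lambda'_\gamma n| \leq n^{\theta_2/\theta_1 + \varsigma}$ for each interval $I'_\gamma$ of $T'$ and every $n$ sufficiently large, where $\chi'_\gamma(x, n)$ denotes the visit count of $x$ to $I'_\gamma$ under the first $n$ iterates of $T$. A Fubini argument, based on the fact that the map $(T, f) \mapsto T'$ is a measurable bijection (up to a null set) onto a union of Rauzy classes on the alphabet $\A'$ with comparable measure classes, then shows that for a.e. $T \in \mathfrak{R} \times \Lambda^\A$ there exists a full-measure set $\mathcal{F} \subseteq C_{m, M}$ for which $T'$ satisfies the conclusions of Theorem \ref{thm:deviation_classical}.

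To conclude, I write $f = \sum_{\gamma \in \A'} c_\gamma \chi_{I'_\gamma}$ with $|c_\gamma| \leq M$, using that $f$ is constant on each interval of $T'$, so that $S_n f(x) = \sum_\gamma c_\gamma \chi'_\gamma(x, n)$. The mean-zero condition $\sum_\gamma c_\gamma \lambda'_\gamma = 0$ (equivalent to $\langle p, q\rangle = 0$ after refinement) allows subtracting the linear term, yielding $|S_n f(x)| \leq M(d+m)\, n^{\theta_2/\theta_1 + \varsigma}$ for every $n$ sufficiently large. The trivial bound $|S_n f(x)| \leq Mn$ covers the remaining finitely many values of $n$ and can be absorbed into the final constant $C = C(T, f, \varsigma)$, completing the proof.
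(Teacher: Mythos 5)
Your proposal is correct and follows essentially the same route as the paper, which deduces Theorem \ref{thm:deviations} from Theorem \ref{thm:deviation_classical} precisely by regarding the discontinuities of $f$ as $m$ marked points and using that marked points contribute only zero Lyapunov exponents. You have merely spelled out the details (the refined IET, the preservation of $\theta_2/\theta_1$, the Fubini step, and the mean-zero cancellation) that the paper leaves implicit.
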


In addition to this, we want appropriate collections of Rokhlin towers. The main requirements to define these Rokhlin towers are the following: First, for any subtower whose height is a fixed proportion of the total height, say $\tfrac{1}{\eta}$, this subtower remains almost as dense in $[0, 1)$ (up to a constant depending on $\eta$) as the entire tower. Second, the heights of the full Rokhlin towers grow at a rate only slightly larger than $\eta^{1 + \epsilon}$, for some $\epsilon > 0$ sufficiently small.


\begin{proposition}
\label{prop:balanced_times}
Given $\epsilon > 0$, there exist $C, \eta_0, \sigma > 0$ and a positive measure set $\Delta \subseteq \mathfrak{R} \times \Lambda^\A$, depending only on $\mathfrak{R}$ and $\epsilon$, such that for a.e. IET $T = (\pi, \lambda) \in \mathfrak{R} \times \Lambda^\A$ and any $\eta > \eta_0$ there exists a sequence of recurrence times $(n_k)_{k \in \N} \subseteq \N$ (to $\Delta$ with respect to $\RVExt$) for which the sequence 
\[ h_{n_k} := \min_{\alpha \in \A} q^{(n_k)}_\alpha, \qquad k \geq 1,\]
satisfies the following:
\begin{enumerate}[i)]
\item \label{cond:towers_prop} For any $x \in I$ and any $k \geq 1$, either $\bigsqcup_{i=0}^{h_{n_k}-1}T^i\big(\big[ x - \tfrac{\sigma}{h_{n_k}}, x]\big)$ or $\bigsqcup_{i=0}^{h_{n_k}-1}T^i\big([x, x + \tfrac{\sigma}{h_{n_k}}]\big)$ consists of $h_{n_k}$ disjoint intervals.
\item \label{cond:density_prop} For any $x \in I$ and any $k \geq 1$, the set $\{T^i(x)\}_{i = 0}^{\lfloor h_{n_k}/\eta\rfloor - 1}$ is $\tfrac{C\eta^{1 + \epsilon}}{h_{n_k}}$-dense in $I$.
\item \label{cond:controlled_growth_prop} $\limsup_{k \to \infty} (h_{n_k})^{\tfrac{1}{k}} \leq C \eta^{1+\e}$.
\end{enumerate}
\end{proposition}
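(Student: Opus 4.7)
I will construct $(n_k)$ as the sequence of return times (under $\RVKZ$) of $(\pi,\lambda)$ to the positive-measure set $\Delta = \Delta_{\tilde\gamma}$ supplied by Proposition \ref{prop: pathconstruction}. Fix a small $\nu > 0$ and apply that proposition with the neighbourhood $U = \{\lambda' \in \Lambda^{\mathcal A} : |\lambda'_\alpha - \lambda'_\beta| \le \nu/(2d)\}$, so that at each return the heights satisfy $\max_{\alpha,\beta} q^{(n_k)}_\alpha/q^{(n_k)}_\beta \le C_\gamma$ and the base lengths satisfy $\max_{\alpha,\beta} \lambda^{(n_k)}_\alpha/\lambda^{(n_k)}_\beta \le 1+\nu$. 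Combined with $\sum_\alpha \lambda^{(n_k)}_\alpha q^{(n_k)}_\alpha = 1$, this gives $\lambda^{(n_k)}_\alpha \ge 1/(d(1+\nu) C_\gamma\, h_{n_k})$ for every $\alpha$. By the ergodicity of $\RVKZ$ with respect to $\rho_{\mathfrak{R}}$, the sequence $(n_k)$ is infinite for a.e. $(\pi,\lambda)\in \mathfrak{R}\times\Lambda^{\mathcal A}$.

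Conditions \ref{cond:towers_prop} and \ref{cond:controlled_growth_prop} will then follow quickly from the set-up. For \ref{cond:towers_prop}, setting $\sigma := 1/(2d(1+\nu)C_\gamma)$ guarantees that every floor of the level-$n_k$ Rokhlin partition has length at least $2\sigma/h_{n_k}$, so any $x \in [0,1)$ has at least one of $[x-\sigma/h_{n_k}, x]$ or $[x, x+\sigma/h_{n_k}]$ contained in a single floor $T^j(I^{(n_k)}_\alpha)$; since $h_{n_k} \le q^{(n_k)}_\alpha$, its first $h_{n_k}$ iterates by $T$ land in distinct floors and are pairwise disjoint. For \ref{cond:controlled_growth_prop}, Kac's lemma gives $n_k/k \to 1/\rho_{\mathfrak{R}}(\Delta)$, and the Oseledets theorem (together with the integrability \eqref{eq: integrability} and the balancedness $q^{(n_k)}_\alpha \ge \max_\beta q^{(n_k)}_\beta/C_\gamma$) yields $\log h_{n_k}/n_k \to \theta_1$. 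Hence $\limsup h_{n_k}^{1/k} = \exp(\theta_1/\rho_{\mathfrak{R}}(\Delta))$, so that choosing any $\eta_0$ with $C\, \eta_0^{1+\epsilon} \ge \exp(\theta_1/\rho_{\mathfrak{R}}(\Delta))$ forces \ref{cond:controlled_growth_prop} whenever $\eta>\eta_0$.

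The main obstacle is condition \ref{cond:density_prop}. My approach is to find, for each balanced time $n_k$, an earlier balanced time $n_j<n_k$ whose tower heights satisfy $h_{n_j} \in [c_1\, h_{n_k}/\eta^{1+\epsilon},\, c_2\, h_{n_k}/\eta]$, where $c_1,c_2$ depend only on $d,\nu,C_\gamma$. In this regime $\lfloor h_{n_k}/\eta\rfloor$ iterations are enough to traverse every level-$n_j$ tower at least once (which I will verify by applying Theorem \ref{thm:deviation_classical} to the step functions $\chi_{I^{(n_j)}_\alpha} - \lambda^{(n_j)}_\alpha$, the window ensuring that the expected visit count beats the deviation term $M^{\theta_2/\theta_1+\varsigma}$). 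The orbit of $x$ then hits every floor of the level-$n_j$ Rokhlin partition, and by balancedness at $n_j$ it is $2\max_\alpha \lambda^{(n_j)}_\alpha \le C/h_{n_j}$-dense, which is at most $C\eta^{1+\epsilon}/h_{n_k}$ by the choice of window. The remaining technical step is to guarantee such an $n_j$ always exists. Since by the ergodic theorem the partial sums $\sum_{i=j}^{k-1}\log(h_{n_{i+1}}/h_{n_i})$ track the linear profile $(k-j)\theta_1/\rho_{\mathfrak{R}}(\Delta)$, for $k$ sufficiently large (depending on $\eta$) there is always a $j$ with $\log(h_{n_k}/h_{n_j})$ falling within the target window of width $\asymp \epsilon\log\eta$. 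Restricting via Egorov to a positive-measure subset on which these Birkhoff averages converge uniformly, discarding the first finitely many $n_k$'s, and enlarging $\eta_0$ so that $\epsilon \log\eta_0$ exceeds the uniform fluctuation bound, completes the construction.
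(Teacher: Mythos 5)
Your construction of $\Delta$ via Proposition \ref{prop: pathconstruction}, and your treatment of condition \ref{cond:controlled_growth_prop}, are consistent with the paper. However, there is a genuine gap at the heart of condition \ref{cond:density_prop}: the intermediate time $n_j$ with $h_{n_j}\in[c_1 h_{n_k}/\eta^{1+\epsilon},\,c_2 h_{n_k}/\eta]$ need not exist if $(n_k)$ is taken to be the \emph{full} sequence of returns to $\Delta$. The one-step increments $\log(h_{n_{k+1}}/h_{n_k})$ are controlled by $\log\|Q(m_k,m_{k+1})\|$, which is integrable but \emph{unbounded} along the returns; uniform (Egorov) convergence of the Birkhoff averages only forces the $k$-th increment to be $o(k)$, so a single return can produce a jump far exceeding the fixed window width $\asymp\epsilon\log\eta$ and skip over the target window entirely. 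No enlargement of $\eta_0$ fixes this, because there is no uniform bound on individual increments. This is exactly the difficulty the paper's proof is built around: it first shows via \eqref{eq:big_returns} that the returns with $\|Q(m_{k-1},m_k)\|>\eta^\delta$ have small density, and then defines $(n_k)$ in \eqref{eq:matrix_norm_growth} as the \emph{subsequence} of returns at which the cocycle norm has accumulated to between $\eta C_\Delta$ and $\eta^{1+\delta}C_\Delta$ using only small steps — so the window is always hit by construction. The price is that condition \ref{cond:controlled_growth_prop} is no longer automatic and requires the counting argument in \eqref{eq:initial_exp_bound}; note also that you cannot simply pass to a sub-subsequence of good $k$'s to rescue \ref{cond:density_prop}, since reindexing destroys the bound $h_{n_k}^{1/k}\le C\eta^{1+\epsilon}$, and it is precisely the simultaneous validity of \ref{cond:density_prop} and \ref{cond:controlled_growth_prop} for the same sequence that drives the contradiction in Proposition \ref{prop:recurrence}. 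A further (fixable, but real) concern is your appeal to Theorem \ref{thm:deviation_classical} for the functions $\chi_{I^{(n_j)}_\alpha}-\lambda^{(n_j)}_\alpha$: that theorem concerns the intervals of the fixed IET (or finitely many marked points), not a sequence of shrinking renormalized intervals with constants uniform in $j$; the paper avoids this by a purely combinatorial tower argument ($h_{n_k}\ge\eta\,|q^{(\overline n_k)}|$ forces a full pass through some level-$\overline n_k$ tower, which by positivity of $A_\gamma$ visits every level-$(\overline n_k-L)$ tower).

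There is also a secondary gap in condition \ref{cond:towers_prop}. Containment of $[x,x+\sigma/h_{n_k}]$ in a single floor $T^j(I^{(n_k)}_\alpha)$ only guarantees that the images remain single uncut intervals for $q^{(n_k)}_\alpha-j$ steps; if $j>q^{(n_k)}_\alpha-h_{n_k}$ (possible since $q^{(n_k)}_\alpha$ may be as large as $C_\gamma h_{n_k}$), the interval re-enters the base $I^{(n_k)}$ within the first $h_{n_k}$ iterates and may be cut by the endpoints of the subintervals $I^{(n_k)}_\beta$. The paper rules this out by the additional combinatorial conditions \eqref{eq:contained_two}--\eqref{eq:split_two} (each tower top meets at most two adjacent base intervals, each in a set of relative measure at least $C_\gamma^{-1}$), which is what dictates the choice between the left and right interval at $x$. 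Your proposal does not address this wrap-around.
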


As we shall see in Proposition \ref{prop:recurrence}, the collections of Rokhlin towers given by Proposition \ref{prop:balanced_times} together with Theorem \ref{thm:deviations} will allow us to combine, for some point $x$ in a given positive measure subset $E$, the recurrence times of $x$ to $E$ with the times where the Birkhoff sums $S_nf(x)$ are uniformly bounded.

\begin{proof}[Proof of Proposition \ref{prop:balanced_times}] Fix $0 < \epsilon < 1$. Our construction will depend on two (small) positive constants $$0 < \nu, \delta < \min\{\epsilon, \tfrac{1}{10d}\}$$ whose exact value, which depends only on $\mathfrak{R}$ and $\epsilon$, we specify at the end on the proof (see \eqref{eq:condition_constants}). 

Let $(\pi,\hat\lambda)\in\mathfrak{R}\times\Lambda^{\mathcal A}$ be an IET, generic w.r.t. $\mathcal{R}$ such that 
\begin{itemize}
 \item $\max_{\alpha,\beta}|\hat\lambda_\alpha-\hat\lambda_{\beta}|\le\nu/2$,
 \item $1\le i<j\le d\Rightarrow \hat\lambda_{\pi_0^{-1}(i)}<\hat\lambda_{\pi_0^{-1}(j)}$.
\end{itemize}
The first condition above guarantees that the lengths of the intervals exchanged by $(\pi,\hat\lambda)$ are comparable, while the second one means that the intervals before exchange are ordered increasingly. The latter condition will allow us to show that IETs sufficiently close to $(\pi,\hat\lambda)$ satisfies \eqref{eq:split_two}. 

Let 
\[
U:= \{\pi\}\times\left\{\lambda\in\Lambda^{\A}\,\left|\, 
\max_{\alpha,\beta}|\lambda_\alpha-\lambda_{\beta}|\le \frac{\nu}{2d}\quad\text{and}\quad 1\le i<j\le d\Rightarrow \lambda_{\pi_0^{-1}(i)}< \lambda_{\pi_0^{-1}(j)}\right\}\right..
\]
Let $\ell, L \in \N$, $\gamma=\gamma_\ell(\pi,\hat \lambda)$ and $\tilde\gamma$ as in Proposition \ref{prop: pathconstruction} when applied to $(\pi,\hat\lambda)$ and $U$.

Since for any $(\pi, \lambda)\in \Delta_{\tilde\gamma}$ the intervals before exchange are ordered increasingly,
for any $\alpha \in \A\setminus\{\pi_0^{-1}(1)\}$ 
there exist {exactly} two distinct symbols $\alpha_l, \alpha_r \in \A$ such that $I_{\alpha_l}$ and $I_{\alpha_r}$ are adjacent intervals (more precisely, the right endpoint of $I_{\alpha_l}$ and the left endpoint of $I_{\alpha_r}$ coincide) satisfying
 \begin{equation}
 \label{eq:contained_two}
 T(I_\alpha) \cap I_{\alpha_l} \neq \emptyset \neq T(I_\alpha) \cap I_{\alpha_r}, \qquad T(I_\alpha) \subseteq I_{\alpha_l} \cup I_{\alpha_r}.
 \end{equation}
 Moreover,
 \begin{equation}
 \label{eq:contained_first}
 T(I_{\pi_0^{-1}(1)})\subseteq I_{\pi_1^{-1}(1)}.
 \end{equation}
 By taking a subset $\Delta\subseteq\Delta_{\tilde\gamma}$ of positive measure, and increasing the constant $C_\gamma$ if necessary, we may assume that 
\begin{itemize}
\item For any $(\pi, \lambda) \in \Delta \cup \RVKZ^{L}(\Delta) \cup \RVKZ^{2L}(\Delta)$ with exchanged intervals $\{I_\alpha\}_{\alpha \in \A}$ {
\begin{equation}
\label{eq:split_two}
T(I_\alpha) \cap I_\beta \neq \emptyset \Rightarrow |T(I_\alpha) \cap I_{\beta}| \geq C_\gamma^{-1}.
\end{equation}
}
\end{itemize}
Indeed, any IET in $\Delta_{\tilde \gamma}$ satisfies \eqref{eq:contained_two} and \eqref{eq:contained_first} and hence it also satisfies \eqref{eq:split_two} if $C_\gamma^{-1}$ is replaced by some sufficiently small constant (depending on the IET). To conclude, it suffices to notice that this constant can be taken uniformly in some small neighborhood of this IET. 


Condition \eqref{eq:split_two} will be helpful later when proving Condition \eqref{cond:towers_prop}.

Let us fix an infinitely renormalizable IET $(\pi, \lambda)$ returning to $\Delta$ infinitely often under the action of $\mathcal{Z}$ (recall that a.e. IET verifies this by the ergodicity of $\mathcal{Z}$) and denote by $(m_k)_{k \in \N}$ the associated sequence of first returns. Notice that, by Birkhoff's ergodic theorem,
\[ \lim_{k \to \infty} \frac{m_k}{k} = \frac{1}{\rho(\Delta)},\]
where $\rho$ denotes the $\mathcal{Z}$-invariant probability measure described in Section \ref{sc:notations}. Moreover, since this measure is equivalent to the Lebesgue measure in any compactly contained subset of $\mathfrak{R} \times \Lambda^\A$ (in particular, in $\mathfrak{R} \times \left\{ \lambda \in \Lambda^\A \,\left|\, |\lambda_\alpha - \lambda_\beta| \leq \tfrac{1}{10d}\right\}\right.$ which contains $\Delta$), there exists $C_d > 0$, depending only on $d$, such that $\rho(\Delta) < C_d \nu.$ Therefore,
\begin{equation}
\label{eq:frequency_return_times}
\limsup_{k \to \infty} \frac{k}{m_k} < C_d \nu.
\end{equation}
The desired sequence of return times $(n_k)_{k \in \N}$ will be a subsequence of $(m_k + 2L)_{k \in \N}$. This choice is made so that, for any $k \geq 0$ and any $x \in [0, 1)$, in addition to the length and height vectors satisfying \eqref{eq:several_balanced_lengths} and \eqref{eq:several_balanced_heights}, the orbit $\{T^i(x)\}_{i = 0}^{h_{n_k} - 1}$ goes through all the floors of the every tower in the Rokhlin tower decomposition associated to the renormalization time $m_k$. This will be useful only later in the proof of Proposition \ref{prop:recurrence}, and so, for the sake of clarity, we discuss this (and prove it) in detail in Claim \ref{cl:minimum_growth}.


By the integrability of the Zorich cocycle when induced to $\Delta$ (for additional details concerning induced cocycles, see, e.g., \cite[Section 4.4.1]{viana_lectures_2014}) and the dominated convergence theorem, we have
\begin{equation}
\label{eq:approx_integrability} \lim_{M \to \infty} \lim_{N \to \infty} \left[ \frac{1}{N} \sum_{k = 1}^{N} \log \| Q(m_{k - 1}, m_k)\| - \frac{1}{N} \sum_{k = 1}^{N} \chi _{ \| Q(m_{k - 1}, m_k)\| \leq M } \log \| Q(m_{k - 1}, m_k)\| \right] = 0,
\end{equation}
By \eqref{eq:approx_integrability}, there exists $\eta_0 \geq \exp(\tfrac{1}{\delta})$, depending only on $\Delta$ and $\delta$, such that for any $\eta \geq \eta_0$,
\begin{equation}
\label{eq:big_returns}
\frac{1}{N} \sum_{k = 1}^{N} \chi _{ \| Q(m_{k - 1}, m_k)\| > \eta^\delta } \log \| Q(m_{k - 1}, m_k)\| \leq \delta.
\end{equation}
In the following, we will always assume $\eta \geq \eta_0$.

Let $ (m_{r_k})_{k \in \N}$ be the subsequence of return times such that 
\begin{equation}
\label{eq:bounded_matrix}
\|Q(m_{r_k - 1}, m_{r_k})\| > \eta^\delta .
\end{equation}
Since for any $C > 0$ the set $\{ (\pi, \lambda) \in \mathfrak{R} \times \Lambda^\A \mid \| Q(\pi, \lambda)\| > C \}$ has positive measure (see, e.g., ), by ergodicity of the Kontsevich-Zorich cocycle this sequence is infinite for a typical IET. Moreover, this subsequence satisfies
\begin{equation}
\label{eq:frequency_subsequence}
\limsup_{k \to \infty} \frac{k}{r_k} \leq \frac{\delta}{\log \eta^\delta } < \min \left\{ \delta, \frac{1}{\log \eta}\right\}.
\end{equation}
{
Indeed, by \eqref{eq:big_returns},
\[ \frac{k \log(\eta^\delta) }{r_k} \leq \frac{1}{r_k} \sum_{i = 1}^{r_k} \chi _{ \| Q(m_{i - 1}, m_i)\| > \eta^\delta } \log \| Q(m_{i - 1}, m_i)\| \leq \delta. \]
}
Denote
\[ C_\Delta := dC_\gamma^2 \|A_\gamma\|^3\]
and define $(n_k)_{k \in \N} := (m_{l_k} + 2L)_{k \in \N}$ inductively by setting $l_0 = 0$ and, for any $k \geq 0$,
\begin{equation}
\label{eq:matrix_norm_growth}
l_{k + 1}:= \min \left\{ l > l_k \,\left|\, \exists l_k < j < l \,\text{ s.t. }\, \begin{array}{l} 
\|Q(m_j, m_l)\| \geq \eta C_\Delta, \\
\|Q(m_i, m_{i + 1})\| \leq \eta^\delta , \text{ for } j \leq i < l. \end{array} \right\}\right..
\end{equation}
Let us point out that the condition above gives rise to a well-defined infinite sequence $(l_k)_{k \in \N}$. Indeed, if only $K \geq 0$ elements of this sequence can be defined, for any $N$ such that $r_N > l_K$ and any $M > N$, by \eqref{eq:big_returns}, 
\begin{align*}
\|Q(m_{r_N}, m_{r_M})\|& \leq \prod_{i = N + 1}^M \|Q(m_{r_i - 1}, m_{r_i})\| \prod_{i = N}^M \|Q(m_{r_i}, m_{r_{i + 1} - 1})\| \\
& < \exp(\delta r_M) (\eta C_\Delta)^{M - N}.
\end{align*}
Hence, by \eqref{eq:frequency_subsequence}, if $M$ is sufficiently large then
\begin{align*}
\frac{1}{r_M}\log \|Q(m_{r_N}, m_{r_M})\| &\leq \delta + \frac{M}{r_M}\left( \log \eta + \log C_\Delta\right) \\
& < \delta + 1 + \delta \log C_\Delta.
\end{align*}
Assuming that $\delta$ is sufficiently small so that the RHS in the previous equation is smaller than $\log 2$, this contradicts the fact that $ \|Q(m_{r_N}, m_{r_M})\| \geq \|A_\gamma\|^{r_M - r_N} \geq 2^{r_M - r_N}$, for any $M > N \geq 1$, where we use the fact that $A_\gamma$ is a positive matrix.

Therefore, using \eqref{eq:matrix_norm_growth}, we can define sequences $(l_k)_{k \in \N}, (s_k)_{k \in \N} \subseteq \N$ such that
\[ \eta C_\Delta \leq \|Q(m_{l_k - s_k}, m_{l_k}) \| \leq \|Q(m_{l_k - s_k}, m_{l_k - 1}) \| \|Q(m_{l_k - 1}, m_{l_k}) \| \leq \eta^{1 + \delta} C_\Delta, \qquad l_k < l_{k + 1} - s_{k + 1},\] for any $k \geq 0$. 

Denote $(\overline{n}_k)_{k \in \N} := (m_{l_k - s_k})_{k \in \N}$. The equation above implies that
\begin{equation}
\label{eq:sequence_norm_growth}
\eta C_\Delta \|A_\gamma\|^{-1} \leq \|Q(\overline{n}_k + L, n_k - L)\| = \|Q(m_{l_k - s_k} + L, m_{l_k} + L)\| \leq \eta^{1 + \delta} C_\Delta \|A_\gamma\|.
\end{equation}

We now check that this sequence satisfies all the properties in the statement of the proposition. \\



\underline{Condition \eqref{cond:towers_prop}}: Define $\sigma := \tfrac{1}{10dC_\gamma}$. Fix $x \in [0, 1)$ and $k \geq 1.$ Let $\alpha \in \A$ and $0 \leq i < q^{(n_k)}_\alpha$ such that $x \in T^i\big(I^{(n_k)}_\alpha\big).$ {Let $\alpha_l, \alpha_r \in \A$ be two symbols such that $T^{q^{(n_k)}_\alpha}(I^{(n_k)}_\alpha) \subseteq I^{(n_k)}_{\alpha_l} \cup I^{(n_k)}_{\alpha_r}$ and $I^{(n_k)}_{\alpha_l}, I^{(n_k)}_{\alpha_r}$ are adjacent intervals (see equations \eqref{eq:contained_two} and \eqref{eq:contained_first}). By \eqref{eq:split_two},} either 
\[ \big[x - \tfrac{\sigma}{q^{(n_k)}_\alpha}, x] \subseteq T^i\big(I^{(n_k)}_\alpha\big), \qquad \text{and} \qquad T^{q^{(n_k)}_\alpha - i}\big(\big[x - \tfrac{\sigma}{q^{(n_k)}_\alpha}, x]\big) \subseteq I^{(n_k)}_{\alpha_l} \text{ or } T^{q^{(n_k)}_\alpha - i}\big(\big[x - \tfrac{\sigma}{q^{(n_k)}_\alpha}, x]\big) \subseteq I^{(n_k)}_{\alpha_r},\]
or 
\[ \big[x, x + \tfrac{\sigma}{q^{(n_k)}_\alpha}] \subseteq T^i\big(I^{(n_k)}_\alpha\big), \qquad \text{and} \qquad T^{q^{(n_k)}_\alpha - i}\big(\big[x, x + \tfrac{\sigma}{q^{(n_k)}_\alpha}]\big) \subseteq I^{(n_k)}_{\alpha_l} \text{ or } T^{q^{(n_k)}_\alpha - i}\big(\big[x, x + \tfrac{\sigma}{q^{(n_k)}_\alpha}]\big) \subseteq I^{(n_k)}_{\alpha_r}.\]
Let us point out that both sets of conditions can hold simultaneously. 

Assume WLOG that 
\[\big[x, x + \tfrac{\sigma}{q^{(n_k)}_\alpha}] \subseteq T^i\big(I^{(n_k)}_\alpha\big)\quad \text{and} \quad T^{q^{(n_k)}_\alpha - i}\big(\big[x, x + \tfrac{\sigma}{q^{(n_k)}_\alpha}]\big) \subseteq I^{(n_k)}_{\alpha_l},\] the other cases being analogous.

Then, since $h_{n_k} = \min_{\alpha} q^{(n_k)}_\alpha$, it follows that $\bigcup_{i = 0}^{h_{n_k} - 1 }T^i\big(\big[x, x + \tfrac{\sigma}{q^{(n_k)}_\alpha}]\big)$ is a union of disjoint intervals contained in the union of the Rokhlin towers $\bigcup_{i = 0}^{q^{(n_k)}_\alpha - 1 }T^i\big(I^{(n_k)}_\alpha\big)$ and $\bigcup_{i = 0}^{q^{(n_k)}_{\alpha_l} - 1 }T^i\big(I^{(n_k)}_{\alpha_l}\big)$ associated to the $n_k$-th step of renormalization. 

\underline{Condition \eqref{cond:density_prop}}: Notice that for any $x \in [0, 1)$, any $k \geq 0$ and any $\alpha \in \A$, the orbit $\{T^i(x)\}_{i = 0}^{q^{(m_k + 2L)}_\alpha - 1}$ is $\max_{\beta} \lambda_\beta^{(m_k + L)}$-dense in $[0, 1)$. 

Indeed, since the first $L$ Zorich renormalizations of $(\pi^{(m_k + L)}, \lambda^{(m_k + L)})$ follow the path $\gamma$ and $A_\gamma$ is a positive matrix, the orbit $\{T^i(x)\}_{i = 0}^{q^{(m_k + 2L)}_\alpha - 1}$ goes through all Rokhlin towers associated to $(\pi^{(m_k + L)}, \lambda^{(m_k + L)})$ at least once. 

 Fix $x \in [0, 1)$ and $k \geq 1.$ We have
\begin{align*}
h_{n_k} & \geq \tfrac{1}{dC_\gamma} \big|q^{(n_k)}\big| = \tfrac{1}{dC_\gamma} \big|Q(\overline{n}_k, n_k)q^{(\overline{n}_k)}\big| \\
& = \tfrac{1}{dC_\gamma} \big|A_\gamma Q(\overline{n}_k + L, n_k - L)A_\gamma q^{(\overline{n}_k)} \big| \\
& \geq \tfrac{1}{dC_\gamma} \big\| Q(\overline{n}_k + L, n_k - L)\big\| \big| q^{(\overline{n}_k)} \big| \\
& \geq \eta \big| q^{(\overline{n}_k)} \big|,
\end{align*}
where, in the third inequality, we use the fact that the matrix $A_\gamma$ is positive and the matrix $Q(\overline{n}_k + L, n_k - L)$ is non-negative, and, in the fourth inequality, we apply \eqref{eq:sequence_norm_growth}.

Hence, the orbit $\{T^i(x)\}_{i = 0}^{\lfloor h_{n_k}/\eta\rfloor - 1}$ goes fully along at least one of the Rokhlin towers associated with $(\pi^{(\overline{n}_k)}, \lambda^{(\overline{n}_k)})$. 
It follows from the remark above that this orbit is $\max_{\beta} \lambda_\beta^{(\overline{n}_k - L)}$-dense in $[0, 1)$. 

Since $\sum_{\beta \in \A} \lambda_\beta^{(\overline{n}_k - L)} q_\beta^{(\overline{n}_k - L)} = 1$, by \eqref{eq:several_balanced_lengths} and \eqref{eq:several_balanced_heights}, 
\[\max_{\beta} \lambda_\beta^{(\overline{n}_k - L)} \leq \tfrac{C_\gamma}{d \big|q^{(\overline{n}_k - L)}\big|}.\]
Hence, noticing that
\[ h_{n_k} \leq C_\gamma |q^{(n_k)}|, \quad q^{(n_k)} = Q(\overline{n}_k - L, n_k) q^{(\overline{n}_k - L)}, \quad \| Q(\overline{n}_k - L, n_k)\| \leq \eta^{1 + \delta}C_\Delta \|A_\gamma\|^2,\]
it follows that 
\[
\max_{\beta} \lambda_\beta^{(\overline{n}_k - L)} \leq \tfrac{C_\gamma^2 \eta^{1 + \delta} C_\Delta \|A_\gamma\|^2}{d h_{n_k}}.
\]
Therefore, denoting 
\[C := \frac{C_\gamma^2 C_\Delta \|A_\gamma\|^2}{d},\]
and since $ 0 < \delta < \epsilon$, it follows that the orbit $\{T^i(x)\}_{i = 0}^{\lfloor h_{n_k}/\eta\rfloor - 1}$ is $\tfrac{C\eta^{1 + \epsilon}}{h_{n_k}}$-dense in $[0, 1)$. \\

\underline{Condition \eqref{cond:controlled_growth_prop}:} Fix $k \geq 1$. Notice that
\[\|Q(n_{k - 1}, n_k)\| \leq (\eta^{1 + \delta}C_\Delta \|A_\gamma\|^2) (\eta C_\Delta)^{p_k} \prod_{\substack{i \in \N \\ n_{k - 1} < m_{r_i} \leq n_k}} \|Q(m_{r_i - 1}, m_{r_i})\|,\]
where $p_k:= \# \{i \in \N \mid n_{k - 1} < m_{r_i} \leq n_k \}.$ Indeed, { recalling that $n_k = m_{l_k} + 2L$ (see \eqref{eq:matrix_norm_growth}),} the times $m_{r_i}$ (see \eqref{eq:bounded_matrix}) satisfying $n_{k - 1} < m_{r_i} \leq n_k$ divide the return times between $m_{l_{k - 1}} = n_{k - 1} - 2L$ and $m_{l_k} = n_k - 2L$ into at most $p_k + 1$ blocks (which do not contain any $m_{r_i}$) so that, on each block, the return times satisfy $\|Q(m_i, m_{i + 1})\| \leq \eta^\delta$. By \eqref{eq:matrix_norm_growth}, the norm of the first $p_k$ blocks is smaller or equal to $\eta C_\Delta$ while the norm of the last block is smaller or equal to $\eta^{1 + \delta} C_\delta$. More precisely, if we denote these $p_k$ return times by
\[n_{k - 1} < m_{t_1} < m_{t_2} < \dots m_{t_{p_k}} < \overline{n}_k < n_k,\]
then, 
\[\left\{ \begin{array}{l} 
\|Q(m_{{t_i} - 1}, m_{t_i})\| \geq \eta^\delta, \qquad\quad \text{ for } i = 1, \dots, p_k - 1, \\
\|Q(m_{t_i}, m_{t_{i + 1} - 1})\| \leq \eta C_\Delta, \qquad \text{ for } i = 1, \dots, p_k - 1, \\
\|Q(n_{k - 1}, m_{t_1 - 1})\| \leq \|m_{l_{k - 1}}, m_{t_1 - 1})\| \leq \eta C_\Delta,\\
\|Q(m_{t_{p_k}}, n_k)\| \leq \| Q(m_{t_{p_k}}, m_{l_k - 1})\| \| Q(m_{l_k - 1}, m_{l_k})\| \| Q(m_{l_k}, m_{l_k + 2L})\| \leq \eta^{1 + \delta}C_\Delta \|A_\gamma\|^2.
\end{array} \right. \]

%

Hence, 
\begin{equation}
\label{eq:initial_exp_bound}
\|Q(n_0, n_k)\| \leq (\eta^{1 + \delta}C_\Delta \|A_\gamma\|^2)^{k} (\eta C_\Delta)^{P_k} \prod_{\substack{m_0 < m_{r_i} \leq m_{l_k}}} \|Q(m_{r_i - 1}, m_{r_i})\|,
\end{equation}
where $P_k:= \sum_{j = 1}^k p_j.$ Notice that by \eqref{eq:big_returns}, $P_k \leq \frac{l_k}{\log \eta}$. 

By \eqref{eq:frequency_return_times}, for $k$ sufficiently large, $\frac{l_k}{n_k} < C_d \nu$. Hence, using \eqref{eq:big_returns} to bound the product along the return times $m_{r_i}$ in \eqref{eq:initial_exp_bound}, for $k$ sufficiently large, we obtain
\begin{align*}
\frac{1}{k} \log \|Q(n_0, n_k)\| & \leq \log(\eta^{1 + \delta} C_\Delta\|A_\gamma\|^2) + \frac{l_k}{k\log \eta} \log(\eta C_\Delta) + \delta \frac{l_k}{k} \\
& \leq \log(\eta^{1 + \delta} C_\Delta\|A_\gamma\|^2) + \frac{l_k}{k} \left( 1 + \frac{\log(C_\Delta)}{\log\eta} + \delta\right) \\
& \leq \log(\eta^{1 + \delta} C_\Delta\|A_\gamma\|^2) + \frac{n_k}{k} C_d\nu (1 + 2\delta),
\end{align*}
assuming $\eta$ is sufficiently large so that $ \frac{\log(C_\Delta)}{\log\eta} < \delta.$

Since $\lim_{k \to \infty} \frac{\log \|Q(0, N)\|}{N} = \theta_1$ 
then, for $k$ sufficiently large, the previous equation yields,
\begin{align*}
\frac{n_k}{k} & = \frac{n_k}{\log \|Q(n_0, n_k)\|} \frac{\log \| Q(n_0, n_k) \|}{k} \\
& \leq \frac{1}{\theta_1 - \delta} \left( \log(\eta^{1 + \delta} C_\Delta\|A_\gamma\|^2) + \frac{n_k}{k} C_d\nu (1 + 2\delta)
 \right).
\end{align*}
Thus
\[ \frac{n_k}{k} \leq \frac{\log(\eta^{1 + \delta} C_\Delta\|A_\gamma\|^2)}{\theta_1 - \delta} \left(1 - \frac{C_d\nu(1 + 2\delta)}{\theta_1 - \delta} \right)^{-1}.\]
Finally, for $k$ sufficiently large,
\begin{align*}
(h_{n_k})^{1/k} & \leq | Q(0, n_k)[1,\dots,1] |^{1/k} \\
& \leq \exp\left(( \theta_1 + \delta)\frac{n_k}{k} \right) \\
& \leq (\eta^{1 + \delta} C_\Delta\|A_\gamma\|^2)^{\tfrac{\theta_1 + \delta}{\theta_1 - 2\delta}\left(1 - \frac{C_d\nu(1 + 2\delta)}{\theta_1 - \delta} \right)^{-1}}\\
& \leq (\eta C_\Delta\|A_\gamma\|^2)^{1 + \epsilon},
\end{align*}
where we assume that $\nu$ and $\delta$ are sufficiently small, so that
\begin{equation}
\label{eq:condition_constants}
(1 + \delta) \frac{\theta_1 + \delta}{\theta_1 - 2\delta}\left(1 - \frac{C_d\nu(1 + 2\delta)}{\theta_1 - \delta} \right)^{-1} \leq 1 + \epsilon,
\end{equation}
where $C_d$ is the constant in \eqref{eq:frequency_return_times}, which depends only on $d$.
\end{proof}
 Using the previous `balanced times' given by Proposition \ref{prop:balanced_times}, we can prove the following.

\begin{proposition}
\label{prop:recurrence}
Fix $\epsilon > 0$ satisfying
\begin{equation}
\label{eq:smallness_cond_epsilon}
(1 + \epsilon)\frac{\theta_2}{\theta_1} < 1,
\end{equation}
and let $\Delta, \gamma, L, C, C_\gamma, \nu, \eta_0, \sigma$ as in Proposition \ref{prop:balanced_times}.

There exists $\eta \geq \eta_0$, depending only on $\mathfrak{R}$ and $\epsilon$, such that for any IET $T = (\pi, \lambda) \in \mathfrak{R} \times \Lambda^\A$ as in Proposition \ref{prop:balanced_times} and Theorem \ref{thm:deviations}, and for a.e $f \in C_{m, M}$, the following holds.

Let $(n_k)_{k \geq 1}$, $(h_{n_k})_{k \geq 1} \subseteq \N$ be the sequences given by Proposition \ref{prop:balanced_times} associated to $T$ and $\eta$. Let $D > mM$ and $E \subseteq [0, 1)$ with positive Lebesgue measure. Then, for any $P \in \N$ there exist $y \in E$, and natural numbers $p \geq P$ and $\tfrac{h_{n_p}}{\eta} \leq n \leq h_{n_p}$, such that: 
\begin{enumerate}
\item $T^n(y) \in E$,
\item $|S_n f(y)| < D$.
\end{enumerate}
\end{proposition}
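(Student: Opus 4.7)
The strategy is to combine Proposition~\ref{prop:balanced_times} (which provides tower heights $h_{n_p}$ of controlled growth $(C\eta^{1+\epsilon})^p$) with Theorem~\ref{thm:deviations} (which gives, for a.e.\ $f \in C_{m,M}$, a bound $\max_y |S_n f(y)| \le C_f n^\beta$). First, choose $\varsigma > 0$ small enough that $\beta := \theta_2/\theta_1 + \varsigma$ satisfies $(1+\epsilon)\beta < 1$, which is possible by hypothesis~\eqref{eq:smallness_cond_epsilon}. Then, by Proposition~\ref{prop:balanced_times}\eqref{cond:controlled_growth_prop}, $B_p := C_f h_{n_p}^\beta$ grows strictly slower than $h_{n_p}^{1/(1+\epsilon)}$, so $B_p = o(h_{n_p}/\eta)$ for $\eta$ chosen sufficiently large. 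The constant $\eta$, depending only on $\mathfrak{R}$ and $\epsilon$, will be fixed at the end so as to make the pigeonhole step below succeed.

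Given $E \subseteq [0,1)$ of positive Lebesgue measure, $D > mM$, and $P \in \N$, fix a Birkhoff-generic $y \in E$. By the pointwise ergodic theorem applied to $\chi_E$, for $p \ge P$ sufficiently large (depending on $y$), the sets
\[
W_1 := \left\{ 0 \le i < \tfrac{h_{n_p}}{2\eta} : T^i y \in E \right\}, \qquad W_2 := \left\{ \tfrac{3h_{n_p}}{2\eta} \le i \le h_{n_p} : T^i y \in E \right\}
\]
satisfy $|W_1| \ge |E| h_{n_p}/(4\eta)$ and $|W_2| \ge |E| h_{n_p}/4$. Any pair $(i_1, i_2) \in W_1 \times W_2$ yields $n := i_2 - i_1 \in [h_{n_p}/\eta, h_{n_p}]$, and setting $y' := T^{i_1} y \in E$ gives $T^n y' = T^{i_2} y \in E$ with $|S_n f(y')| = |S_{i_2} f(y) - S_{i_1} f(y)|$. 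It therefore suffices to find such a pair with $|S_{i_2} f(y) - S_{i_1} f(y)| < D$.

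To find such a pair, note that $\{S_{i_1} f(y) : i_1 \in W_1\}$ lies in the reduced range $[-B_1, B_1]$ with $B_1 := C_f(h_{n_p}/(2\eta))^\beta$, whereas $\{S_{i_2} f(y) : i_2 \in W_2\} \subset [-B_p, B_p]$. Partitioning $[-B_p, B_p]$ into intervals of length $D$ and applying a double pigeonhole---first over the $\eta$ subblocks of $[0, h_{n_p}]$ of length $h_{n_p}/\eta$, then across the $D$-intervals covering the Birkhoff-sum range---together with the sharper bound $B_1 \ll B_p$ on $W_1$, produces a pair $(i_1, i_2) \in W_1 \times W_2$ whose Birkhoff sums differ by less than $D$.

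The main obstacle is ensuring the pigeonhole selects a pair straddling $W_1$ and $W_2$ rather than two indices in the same subblock: a naive pigeonhole's effective count scales as $B_p/D$, which grows with $p$, while $\eta$ must be fixed. The resolution exploits the spectral-gap hypothesis $(1+\epsilon)\theta_2/\theta_1 < 1$, which forces $B_p$ to be much smaller than $h_{n_p}/\eta$ and allows a two-scale counting argument (using that the total number of $E$-visits is $\sim |E| h_{n_p}$, dwarfing the number of $D$-intervals required to cover $[-B_p, B_p]$) to succeed uniformly in $p$, for $\eta$ chosen sufficiently large depending only on $\mathfrak{R}$ and $\epsilon$.
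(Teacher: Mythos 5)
Your setup (generic point, two windows $W_1$, $W_2$, candidate pairs giving $n \in [h_{n_p}/\eta, h_{n_p}]$ with both endpoints in $E$) is fine, but the pigeonhole step at its core does not work, and this is exactly the obstacle you flag without resolving. Counting shows that $|W_2| \sim |E| h_{n_p}$ greatly exceeds the number $\sim B_p/D$ of $D$-intervals covering $[-B_p,B_p]$, so many indices of $W_2$ share a $D$-interval, and likewise for $W_1$; but nothing forces a $W_1$-value and a $W_2$-value into the same (or adjacent) $D$-intervals. The two collections $\{S_{i_1}f(y)\}_{i_1\in W_1}$ and $\{S_{i_2}f(y)\}_{i_2\in W_2}$ could occupy disjoint unions of intervals separated by more than $D$, and neither the ergodic theorem nor the deviation bound rules this out at a \emph{fixed} scale $p$: since $D$ is a fixed constant while $B_p \sim h_{n_p}^{\beta} \to \infty$, there is ample room for such avoidance, and no choice of the fixed constant $\eta$ can compensate. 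In other words, you are trying to prove the (stronger) single-scale statement ``for every large $p$ there is a good pair,'' which is not what the proposition asserts and is not accessible by this counting.

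The paper's proof is structured precisely to get around this: it argues by contradiction assuming failure at \emph{all} scales $p \in [P,k]$ simultaneously. From a density point of $E$ it extracts, via the Rokhlin towers of Proposition \ref{prop:balanced_times}, a set $L(k)$ of at least $\delta(\eta/4)^{k-P}$ indices such that any two differ by an amount in $[h_{n_p}/\eta, h_{n_p}]$ for some $p\in[P,k]$; the assumed failure at every such scale then forces the corresponding Birkhoff sums to be pairwise $(D-mM)$-separated, hence some $|S_i f|$ is at least of order $D'(\eta/4)^{k-P}$. This grows exponentially in $k$ at rate $\eta/4$, while Theorem \ref{thm:deviations} combined with Proposition \ref{prop:balanced_times}\eqref{cond:controlled_growth_prop} caps the growth rate by $(C\eta)^{(1+\epsilon)(\theta_2/\theta_1+\varsigma)} < C\eta^{1-\varsigma}$; taking $\eta > (4C)^{1/\varsigma}$ gives the contradiction. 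The spectral gap is thus exploited by comparing \emph{exponential rates over many scales}, not magnitudes at one scale. (A secondary point: since $E$ is an arbitrary positive-measure set, the paper also needs the tower combinatorics --- $\vartheta$-full floors, the continuity interval $J_x$, and \eqref{eq:split_two} --- to guarantee the separated return pairs genuinely start and end in $E$; your generic-point device handles this more cheaply but only within the single-scale framework that fails.) To repair your argument you would need to replace the single-scale pigeonhole with this multi-scale accumulation of separated Birkhoff sums.
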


\begin{proof}[Proof of Proposition \ref{prop:recurrence}]
Fix $\varsigma > 0$ satisfying 
\begin{equation}
\label{eq:smallness_cond_epsilon_varsigma}
(1 + \epsilon)\left(\frac{\theta_2}{\theta_1} + \varsigma\right) < 1 - \varsigma.
\end{equation}
Notice that such a value exists since, by assumption, $\epsilon$ verifies \eqref{eq:smallness_cond_epsilon}. Fix $\eta \in 2\N$ satisfying
\begin{equation}
\label{eq:large_eta}
\eta > \max\left\{\eta_0, (4C)^{1/\varsigma}\right\},
\end{equation}
and let $(n_k)_{k \geq 1}$, $(h_{n_k})_{k \geq 1} \subseteq \N$ be the sequences given by Proposition \ref{prop:balanced_times} associated to $T$ and $\eta$.

Assume, for the sake of contradiction, that the conclusions of Proposition \ref{prop:recurrence} do not hold for some $D, P > 0$ and some $E \subseteq [0, 1)$ with $\overline{\delta} := |E| > 0$. 

Up to considering a positive measure subset of $E$, we may assume, without loss of generality,
\begin{equation}
\label{eq:minimum_return_time}
\inf_{x \in E} \left( \inf \{i \geq 1 \mid T^i(x) \in E\}\right) \geq h_{n_P}.
\end{equation}

Fix $\vartheta > 0$ satisfying
\begin{equation}
\label{eq:vartheta_condition}
\vartheta < \tfrac{1}{8\sigma \|A_\gamma\|^2 C_\gamma (1 + \nu)},
\end{equation}
and let $x \in E$ be a density point of $E$. Fix $k > P$ and define
\[ L_0(k) := \left\{ 0 \leq i < h_{n_k} \, \left| \, |F_{k}(T^i(x)) \cap E | > (1 - \vartheta)|F_{k}(T^i(x))| \right. \right\}, \]
where $F_{k}(T^i(x))$ denotes the unique floor containing $T^i(x)$ in the Rokhlin towers decomposition associated to the renormalization time $n_k - 2L$. 

{
\begin{claim}
\label{cl:minimum_growth}
There exists $k_0 > 0$ and $0 < \delta < \overline{\delta}$ such that $|L_0(k)| \geq \delta h_{n_k}$, for any $k \geq k_0$.
\end{claim}
\begin{proof}[Proof of Claim \ref{cl:minimum_growth}]
Notice that by the choice of $\Delta$ in Proposition \ref{prop:balanced_times}, the orbit $\{x, \dots, T^{h_{n_k} - 1}(x)\}$ intersects all floors in the Rokhlin towers decomposition associated to the renormalization time $n_k - 2L$. In particular,
 \begin{equation}
 \label{eq:full_tower}
 [0, 1) = \bigcup_{i = 0}^{h_{n_k} - 1} F_k(T^i(x)).
 \end{equation}
 Indeed, as $q^{(n_k)} = A_\gamma q^{(n_k - L)}$ and all the entries of $A_\gamma$ are larger or equal to $2$, it follows that $h_{n_k} \geq 2 |q^{(n_k - L)}|$. In particular, the orbit $\{x, \dots, T^{h_{n_k} - 1}(x)\}$ goes through at least one full Rokhlin tower associated with the renormalization time $n_k - L$. Moreover, since $q^{(n_k - L)} = A_\gamma q^{(n_k - 2L)}$ and $A_\gamma$ is positive, this orbit goes through all the Rokhlin towers associated with the renormalization time $n_k - 2L$ at least once. 

Since a.e. $x \in E$ is a density point of $E$, there exists $t_0 > 0$ such that the set
\[ \overline{E} := \left\{ x \in E \,\left| \, \forall 0 < t < t_0, \quad \frac{\textup{Leb}_{[0, 1)}\{y\in E \mid |x - y| < t\}}{2t}> 1 - \tfrac{\vartheta}{2} \right\} \right. \]
satisfies 
\[|\overline{E}| > \tfrac{\overline{\delta}}{2}.\]
Let $k_0 > 0$ sufficiently large so that, for any $k \geq k_0$, 
\[ \max_{\alpha \in \A} \lambda_\alpha^{(n_k - 2L)} < t_0. \]
Fix $k \geq k_0$ and let us denote 
\[ \overline{L} = \{ 0 \leq i < h_{n_k} \mid \overline{E} \cap F_k(T^i(x)) \neq \emptyset \}. \]
Notice that, by definition of $\overline{E}$,
\[\overline{L} \subseteq L_0(k).\]
By \eqref{eq:full_tower} we have $\overline{E} \subseteq \bigcup_{i = 0}^{h_{n_k} - 1} F_k(T^i(x))$. Thus, 
\[\frac{\overline{\delta}}{2} < |\overline{E}| \leq \bigcup_{i \in \overline{L}} F_k(T^i(x)) \leq |\overline{L}|\max_{\alpha \in \A} \lambda_{\alpha}^{(n_k - 2L)} \leq |L_0(k)|\max_{\alpha \in \A} \lambda_{\alpha}^{(n_k)} \leq |L_0(k)| \frac{d(1 + \nu) C_\gamma}{h_{n_k}},\]
where in the last inequality we use $\sum_{\alpha \in \A} \lambda_\alpha^{(n_k)} q_\alpha^{(n_k)} = 1$, together with \eqref{eq:several_balanced_lengths} and \eqref{eq:several_balanced_heights}.

The result holds by setting $\delta := \overline{\delta}\frac{1}{2d(1 + \nu)C_\gamma}$. 
\end{proof}
In the following, we assume that $k \geq \max\{P, k_0\}$, where $k_0$ is given by the previous claim. 

}
\begin{claim}
There exists $L(k) \subseteq L_0(k)$ such that $|L(k)| \geq \delta \left( \tfrac{\eta}{4}\right)^{k - P}$ and, for any $a, b \in L(k)$, if $a \neq b$ then there exists $P \leq p \leq k$ such that $\tfrac{h_{n_p}}{\eta} \leq |a - b| \leq h_{n_p}$.
\end{claim}

\begin{proof}[Proof of the Claim]
Starting from $L_0(k)$, we define a nested sequence of sets 
$$L(k):= L_{k - P}(k) \subseteq L_{k - P - 1}(k) \subseteq \dots \subseteq L_0(k) \subseteq \{0, \dots, h_{n_k}\},$$ as follows. First, we split $ \{0, \dots, h_{n_k}\}$ in $\eta$ disjoint pieces with at {most} $ \left\lfloor \tfrac{h_{n_k}}{\eta} \right\rfloor$ consecutive elements by setting
\[J_{0, i}(k) = \left\{i \left\lfloor \tfrac{h_{n_k}}{\eta} \right\rfloor, \dots, i \left\lfloor \tfrac{h_{n_k}}{\eta}\right\rfloor - 1 \right\}, \qquad i = 0, \dots \eta - 1, \]
and 
\[J_{0, \eta}(k) = \left\{\eta \left\lfloor \tfrac{h_{n_k}}{\eta} \right\rfloor, \dots, h_{n_k} \right\}.\]
Denote $L_{0, i}(k):= L_0(k) \cap J_{0, i}(k)$, for $i = 0, \dots, \eta$. It follows {from Claim \ref{cl:minimum_growth}} that 
\begin{equation}
\label{eq:even_odd_split}
 \left | \bigsqcup_{i \in 2\N + \varphi(k)} L_{0, i}(k) \right| = \max\left\{ \left | \bigsqcup_{i \textup{ odd}} L_{0, i}(k) \right|, \left | \bigsqcup_{i \textup{ even}} L_{0, i}(k) \right| \right\} \geq \frac{|L_0(k)|}{2} \geq \delta \frac{h_{n_k}}{2},
\end{equation}
where $\varphi(k) = 1$ if the maximum in the equation above is attained for $i$ odd, and $\varphi(k) = 0$ otherwise. 

Since by assumption $h_{n_{k - 1}} \leq \left\lfloor \tfrac{h_{n_k}}{\eta} \right\rfloor$, for each $1 \leq i \leq \eta$ there exists a subset $\overline{J}_{0, i} \subseteq J_{0, i}(k)$ of $h_{n_{k - 1}}$ consecutive elements such that $\overline{L}_{0, i}(k):= L_0(k) \cap \overline{J}_{0, i}(k)$ satisfies
\[ |\overline{L}_{0, i}(k)| \geq \frac{ \eta h_{n_{k - 1}}}{2h_{n_k}}|L_0(k) \cap J_{0, i}(k)|.\]
We define 
\[ L_1(k) := \bigsqcup_{i \in 2\N + \varphi(k)} \overline{L}_{0, i}(k),\]
which by \eqref{eq:even_odd_split} verifies
\[ \left | L_1(k) \right| \geq \frac{ \eta h_{n_{k - 1}}}{2h_{n_k}} \frac{|L_0(k)|}{2} \geq \delta h_{n_{k - 1}} \frac{\eta }{4}.\]
Notice that by construction, $L_1(k)$ is the disjoint union of $\tfrac{\eta}{2}$ disjoint sets, and each of these sets is contained in a subset of $h_{n_{k - 1}}$ consecutive natural numbers. Moreover, if $a$ and $b$ belong to different pieces of this union, then $|b - a| > h_{n_k}$. 

Repeating the process described above to each of these sets in the disjoint union, we can define, recursively, sets $L_{k - P}(k) \subseteq L_{k - P - 1}(k) \subseteq \dots \subseteq L_0(k) \subseteq \{0, \dots, h_{n_k}\},$ such that, for any $0 \leq i \leq k - P$, the set $L_i(k)$ verifies $|L_{i}(k)| \geq \delta h_{n_{k - i}} \left(\frac{\eta}{4}\right)^i$ and it is the disjoint union of (possibly empty) $ \left(\frac{\eta}{2}\right)^i$ sets, each of these contained in a subset of $h_{n_{k - i}}$ consecutive natural numbers. Moreover, if $a$ and $b$ belong to different pieces of this union, there exists $k - i \leq p \leq k$ such that $\tfrac{h_{n_p}}{\eta} \leq |a - b| \leq h_{n_p}$.

Finally, the claim follows by noticing that by \eqref{eq:minimum_return_time} the $\left(\frac{\eta}{2}\right)^{k - P}$ disjoint sets defining $L(k)= L_{k - P}(k)$ consist of at most one element. Indeed, if this was not the case, there exist $a, b \in L(k) \subseteq L_0(k)$ such that $0 < b - a < h_{n_p}.$ In particular, 
\begin{equation}
\label{eq:set_intersection}
|F_k(T^a(x)) \cap E| > (1 - \vartheta)|F_k(T^a(x))|, \qquad |F_k(T^b(x)) \cap E| > (1 - \vartheta)|F_k(T^b(x))|.
\end{equation}

If $T^a(x)$ and $T^b(x)$ belong to the same tower in the decomposition associated with the renormalization time $n_k - 2L$, then, by \eqref{eq:vartheta_condition} and \eqref{eq:set_intersection}, $T^{b - a}(E) \cap E \neq \emptyset$, which contradicts \eqref{eq:minimum_return_time}. 

Otherwise, if $T^a(x)$ and $T^b(x)$ belong to a different tower in this decomposition, by \eqref{eq:several_balanced_lengths} and \eqref{eq:split_two}, 
\[|T^{b - a}(F_k(T^a(x)) \cap F_k(T^b(x))| > \frac{1}{C_\gamma (1 + \nu)} |F_k(T^b(x))|,\]
and thus, by \eqref{eq:vartheta_condition} and \eqref{eq:set_intersection}, $T^{b - a}(E) \cap E \neq \emptyset$ which again contradicts \eqref{eq:minimum_return_time}. 
\end{proof}

Recall that either $\big[ x - \tfrac{\sigma}{h_{n_k}}, x]$ or $\big[x, x + \tfrac{\sigma}{h_{n_k}}\big]$ is a continuity interval of $T^{h_{n_k}}$. Let us denote this continuity interval by $J_x$. Notice that, since $f \in C_{m, M}$ is piecewise constant {and $\bigsqcup_{i = 0}^{h_{n_k} - 1} T^i(J_x)$ is a disjoint union of intervals,} we have
{
\begin{equation}\label{eq: valuepropagates}
 |S_if(y) - S_if(y')| < mM, \qquad \text{for any } y, y' \in J_x \text{ and }0 \leq i < h_{n_k}.
\end{equation}
Moreover, for any $i, j \in L(k)$ with $i < j$, there exists $y \in T^i(J_x) \cap E$ such that $T^{j - i}(y) \in E.$ 
Indeed, {by the definition of $L(k)$}, we have
\begin{align*}
\big|T^{j - i}\big(T^i(J_x \cap E)\big) \cap E\big| & = |T^i(J_x\cap E)|-\big|T^{j - i}\big(T^i(J_x \cap E)\big) \setminus E\big|\\
& \ge |T^i(J_x\cap E)|-\big|F_k(T^j(x)) \setminus E\big|\\
& \ge|T^i(J_x \cap E)| - \vartheta |F_k(T^j(x))| \\
& \geq |T^i(J_x)| - \vartheta |F_k(T^i(x))|- \vartheta |F_k(T^j(x))|\\
& \geq \frac{\sigma}{h_{n_k}}- 2 \vartheta \max_{\beta \in \A} \lambda_\beta^{(n_k - 2L)}.
\end{align*}
Since $\sum_{\beta \in \A} \lambda_\beta^{(n_k - 2L)} q_\beta^{(n_k - 2L)} = 1$ and $q^{(n_k)} = A_\gamma^2 q^{(n_k - 2L)}$, by \eqref{eq:several_balanced_lengths}, 
\[
\max_{\beta} \lambda_\beta^{(n_k - 2L)} \leq \tfrac{(1 + \nu)\|A_\gamma\|^2}{h_{n_k}}.
\]
Hence, by \eqref{eq:vartheta_condition}, 
\[\big|T^{j - i}\big(T^i(J_x \cap E)\big) \cap E\big| \geq \frac{\sigma - 2\vartheta (1 + \nu)\|A_\gamma\|^2}{h_{n_k}} \geq \frac{3\sigma}{4h_{n_k}}.\]
and thus there exists $y \in T^i(J_x) \cap E \cap T^{i - j}(E).$ 

Therefore, if we assume that the conclusions of Proposition \ref{prop:recurrence} are false, it follows that $$|S_{j - i}f(y)| > D,$$ for any $i, j \in L(k)$ with $i < j$ and any $y \in T^i(J_x)$ {$\cap E \cap T^{i - j}(E)$}. {In view of \eqref{eq: valuepropagates}, this implies that $$|S_{j - i}f(y)| > D - mM,$$
for any $i, j \in L(k)$ with $i < j$ and any $y \in T^i(J_x)$.} In particular, letting $i_k := \min(L(k))$, 
\[|S_{j - i_k} f (T^{i_k}(z)) - S_{i - i_k} f(T^{i_k}(z))| = |S_{j - i}f(T^{i}(z))| > D - mM.\]
for any $i, j \in L(k)$ with $i_k < i < j$ and any $z \in J_x$. 

Denote $D' := D - mM$}. The previous equation implies
\[ \left| \bigsqcup_{i \in L(k) \setminus \{i_k\}} \left(S_{i - i_k}f(T^{i_k}(x)) - \tfrac{D'}{2}, S_{i - i_k}f(T^{i_k}(x)) + \tfrac{D'}{2}\right) \right| \geq D'(|L(k)| - 1).\]
In particular, there exists $i \in L \setminus \{i_k\}$ such that 
\[ |S_{i - i_k}f(T^{i_k}(x))| \geq \frac{D'}{2}(|L(k)| - 1) \geq \frac{D'}{2} \left( \delta \left( \tfrac{\eta}{4}\right)^{k - P} - 1\right).\]
Hence
\[ \max\left\{ |S_{i_k}f(x)|, |S_{i}f(x)| \right\} \geq \frac{D'}{4} \left( \delta \left( \tfrac{\eta}{4}\right)^{k - P} - 1\right).\] 
On the other hand, it follows from Theorem \ref{thm:deviations} that
\[ \max\left\{ |S_{i_k}f(x)|, |S_{i}f(x)| \right\} \leq C_0h_{n_k}^{\frac{\theta_2}{\theta_1} + \varsigma}.\]
where $C_0$ is a positive constant depending only on $T, f, \varsigma$. The last two equations imply that 
\[ C_0h_{n_k}^{\frac{\theta_2}{\theta_1} + \varsigma} \geq \frac{D'}{2} \left( \delta \left( \tfrac{\eta}{4}\right)^{k - P} - 1\right).\]
Since the construction above can be done for any $k > P$, taking $k$-th root and making $k$ go to infinity, Proposition \ref{prop:balanced_times} yields
\[ (C\eta)^{(1 + \epsilon)\left(\frac{\theta_2}{\theta_1} + \varsigma\right)} \geq \frac{\eta}{4}.\]
Thus, by \eqref{eq:smallness_cond_epsilon_varsigma}, 
\[ C\eta^{1 - \varsigma} \geq \frac{\eta}{4},\]
which contradicts \eqref{eq:large_eta}.
\end{proof}

Theorem \ref{thm:good_returns} now follows quickly from Propositions \ref{prop:balanced_times} and \ref{prop:recurrence}.

\appendix
\section{}
\label{appendix}

This section gives a detailed proof of the first step in the proof of Theorem \ref{thm:main_thm}, namely, the following.

\begin{proposition}
\label{prop:measurable_dependence_measures}
Let $M > 0$, $m \in \N$ and $d \geq 2$ be fixed. Given an IET $T$ on $d$ intervals ergodic w.r.t. the Lebesgue measure on $[0, 1)$ and a positive measure set $W \subseteq C_{m, M}$ such that for any $f \in W$ the associated skew product $T_f$ given by \eqref{eq:skew_product} is not ergodic w.r.t. to the product of Lebesgue measures on $[0, 1) \times \R$, there exists a positive measure subset $V \subseteq W$ and a measurable function $f \mapsto \mu_f$ from $V$ to the space of Radon measures on $[0, 1) \times \R$ such that 
\begin{itemize}
\item $\mu_f$ is invariant by $T_f$;
\item $\mu_f \neq c\LebIR,$ for any $c \in \R \setminus \{0\}$;
\item $\mu_f \not\perp \LebIR$.
\end{itemize}
\end{proposition}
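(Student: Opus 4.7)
The plan is to build $\mu_f$ as an absolutely continuous $T_f$-invariant measure of the form $\mu_f = H_f \cdot \LebIR$, where $H_f:[0,1)\times\R\to[0,1]$ is a bounded, $T_f$-invariant, not-a.e.-constant function depending measurably on $f$. Such an $H_f$ automatically yields all three required properties: $\mu_f \ll \LebIR$ (so $\mu_f \not\perp \LebIR$), $\mu_f \neq c \LebIR$ (from non-a.e.-constancy of $H_f$), and the Radon property (since $0 \leq H_f \leq 1$ forces $\mu_f \leq \LebIR$).

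First, I would apply Atkinson's recurrence theorem to conclude that $T_f$ is recurrent for every $f \in W$, so that the induced map $\widetilde T_{f,N}$ on $[0,1) \times [-N,N]$ is well-defined and preserves the finite measure $\LebIR|_{[0,1) \times [-N,N]}$. As observed in the proof of Theorem~\ref{thm:corollary}, the non-ergodicity of $T_f$ implies the existence of some $N_f \in \N$ for which $\widetilde T_{f,N_f}$ is non-ergodic. Using Lemma~\ref{lem: contemb} (measurability of $f \mapsto \widetilde T_{f,N}$) together with the measurability of the set of ergodic transformations of a standard probability space, the sets $W_N := \{f \in W : \widetilde T_{f,N} \text{ is non-ergodic}\}$ are measurable and their union over $N \in \N$ equals $W$. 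Hence there exists $N$ with $W_N$ of positive measure, and in what follows I fix such $N$ and restrict to $W_N$.

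Next, I would fix a countable family $\{A_i\}_{i \in \N}$ of Borel subsets of $[0,1) \times [-N,N]$ generating the Borel $\sigma$-algebra, and for each $f \in W_N$ and $i \in \N$ define
\[
h_{f,i}(x,t) := \lim_{n \to \infty} \frac{1}{n} \sum_{k=0}^{n-1} \mathbf{1}_{A_i} \circ \widetilde T_{f,N}^{k}(x,t),
\]
whenever the limit exists. By Birkhoff's ergodic theorem the limit exists $\LebIR$-a.e. and gives a $\widetilde T_{f,N}$-invariant function, and since the pre-limit averages are jointly measurable in $(f,x,t)$, so is $h_{f,i}$. Non-ergodicity of $\widetilde T_{f,N}$ on $W_N$ guarantees that the invariant $\sigma$-algebra is nontrivial, and since the $A_i$ generate the Borel $\sigma$-algebra, at least one $h_{f,i}$ is not a.e. constant. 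Therefore the measurable sets $F_i := \{f \in W_N : h_{f,i} \text{ is not a.e. constant}\}$ cover $W_N$, so some $F_{i_0}$ has positive measure. Setting $V := F_{i_0}$ furnishes the desired positive-measure subset of $W$.

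Finally, for $f \in V$, I would extend $h_{f,i_0}$ to a $T_f$-invariant function $H_f$ on the entire space by saturation: for $\LebIR$-a.e.~$(x,t)$, recurrence of $T_f$ provides some integer $k(f,x,t)$ with $T_f^{k(f,x,t)}(x,t) \in [0,1) \times [-N,N]$, and I set $H_f(x,t) := h_{f,i_0}(T_f^{k(f,x,t)}(x,t))$. Invariance of $h_{f,i_0}$ under $\widetilde T_{f,N}$ makes this independent of the choice of $k$, so $H_f$ is well-defined and $T_f$-invariant. Defining $\mu_f := H_f \cdot \LebIR$ produces the required family of measures. The main technical obstacle is establishing joint measurability throughout: of $f \mapsto \widetilde T_{f,N}$ (handled by Lemma~\ref{lem: contemb}), of the Birkhoff limits $h_{f,i}$ (via liminf/limsup of jointly measurable functions, together with Fubini to guarantee that the convergence set is fiberwise full measure), and of the saturation, by choosing $k(f,x,t)$ as the minimal such integer.
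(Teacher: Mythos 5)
Your route is genuinely different from the paper's. The paper builds $\mu_f$ from the ergodic decomposition of $\textup{Leb}_{[0,1)\times[-1/2,1/2]}$ under the induced map, realizes the fiber measures $p_{x,f}$ as weak-* limits of Birkhoff averages of Dirac masses at generic points, proves measurability of $(x,f)\mapsto p_{x,f}$, extends to the whole strip, and finally uses Lusin's theorem plus a density-point argument to select a positive-measure set $V$ on which the averaged measure stays a definite Levy--Prokhorov distance away from Lebesgue. Your idea of instead producing a bounded, non-constant, invariant density $H_f$ via Birkhoff limits of indicators $\mathbf{1}_{A_i}$ over a countable generating family is simpler and bypasses the ergodic decomposition, the Levy--Prokhorov continuity estimates, and the Lusin/density-point selection entirely; the selection of $V$ reduces to picking an index $i_0$ for which non-constancy holds on a positive-measure set of $f$. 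Two small caveats: for the implication ``all $h_{f,i}$ constant $\Rightarrow$ ergodic'' you should take the $A_i$ to form a $\pi$-system (e.g.\ dyadic rectangles) so that Dynkin's lemma applies, and the joint measurability of $(f,x,t)\mapsto \widetilde T_{f,N}^{\,k}(x,t)$, which you defer, is exactly the point where the paper has to work (its Proposition on measurability excludes the null sets $D^c$ and $E^c$ where orbits hit discontinuities); your sketch of how to handle it is plausible but is the bulk of the technical content.

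There is, however, one concrete false step: in the saturation stage you assert that for $\LebIR$-a.e.\ $(x,t)$ recurrence of $T_f$ provides $k$ with $T_f^k(x,t)\in[0,1)\times[-N,N]$. Recurrence (conservativity) only guarantees that a.e.\ point \emph{of the strip} returns to the strip; it does not make the strip sweeping-out. Indeed, if $f$ is a bounded coboundary $f=g-g\circ T$, then $S_nf(x)=g(x)-g(T^nx)$ is uniformly bounded, so the orbit of $(x,t)$ with $|t|>N+2\|g\|_\infty$ never meets $[0,1)\times[-N,N]$ --- the paper points out precisely this phenomenon in its remarks following Lemma \ref{lem: Nextension} and Proposition \ref{prop: msbdecomp}. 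The fix is easy: let $\Sigma_f:=\bigcup_{k\in\Z}T_f^{-k}\big([0,1)\times[-N,N]\big)$, which is $T_f$-invariant, define $H_f$ by your formula on $\Sigma_f$ and $H_f:=0$ on its complement. This $H_f$ is still bounded, $T_f$-invariant, jointly measurable, and not a.e.\ constant (it restricts to $h_{f,i_0}$ on the strip), so $\mu_f:=H_f\cdot\LebIR$ retains all three required properties. With that patch, and the measurability details filled in, your argument goes through.
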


Given $T$ and $f$ as above we will construct the measure $\mu_f$ by using the ergodic decomposition of the measure $\textup{Leb}_{[0, 1)\times[-1/2,1/2]}$ with respect to the first return map of $T_f$ to the set {$[0, 1) \times [-1/2,1/2]$}. Let us denote the first return map by $\tilde T_f$. Recall that it follows from a result by Atkinson \cite{atkinson_recurrence_1976} that if $T$ is ergodic, then almost every point is \emph{recurrent}, that is, for every $\epsilon>0$, there exists arbitrarily large $N>0$ such that $|S_nf(x)|<\epsilon$. In particular, the map $\tilde T_f:[0, 1)\times[-1/2,1/2]\to[0, 1)\times[-1/2,1/2]$ is well-defined (for a.e. point) and preserves $\textup{Leb}_{[0, 1)\times[-1/2,1/2]}$. 

	For any $f\in V$, let us consider the ergodic decomposition of $\textup{Leb}_{[0, 1)\times[-1/2,1/2]}$ with respect to $\tilde T_{f}$:
	\begin{equation}\label{eq: erg_decomp}
\textup{Leb}_{[0, 1)\times[-1/2,1/2]}=\int_{[0, 1)\times[-1/2,1/2]/ \textup{Inv}(\tilde T_f)}p_{\bar x}\,d\rho_f(\bar x),
	\end{equation}
	where $\rho_f$ is a probability measure on the space of ergodic components $[-1/2,1/2]\times\R/ \textup{Inv}(\tilde T_f)$ and, for $\rho_f$-a.e. $\bar x \in [0, 1)\times[-1/2,1/2]/ \textup{Inv}(\tilde T_f)$, $p_{\bar x}$ is an ergodic measure with respect to $\tilde T_f$ (see \cite{Rohlin_general_1948}).

	If $\tilde T_f$ is ergodic, then the space of invariant components is trivial, and hence, it is one point, while if it is not ergodic, then the space is expected to be uncountable. Thus, to get a decomposition of $\textup{Leb}_{[0, 1)\times[-1/2,1/2]}$ that will allow us to assign a measure to a point and a cocycle measurably, we need to consider a different decomposition. 
	
	Since $(\tilde T_f,p_{\bar x})$ is ergodic for $\rho_f$-a.e. $\bar x\in [-1/2,1/2]\times\R/ \textup{Inv}(\tilde T_f)$, in view of Birkhoff Ergodic Theorem, $p_{\bar x}$-almost every point is generic, that is, $\rho_{\bar x}$-a.e. $x\in[-1/2,1/2]\times\R$ satisfies
	\[
	\lim_{n\to\infty}\frac{1}{n}\sum_{i=0}^{n-1}\delta_{\tilde T_f ^i x}=p_{\bar x},
	\]
where the limit is taken in the weak-* topology.
	In particular, almost every point with respect to $\textup{Leb}_{[0, 1)\times[-1/2,1/2]}$ is generic for a measure $p_{\bar x}$, for a unique $\bar x\in [0, 1)\times[-1/2,1/2]/ \textup{Inv}(\tilde T_f)$. Let us denote this measure by $p_{x,f}$. We have the following result.
	\begin{lemma}\label{lem: decomp}
		We have the following decomposition:
		\[
		\textup{Leb}_{[0, 1)\times[-1/2,1/2]}=\int_{[0, 1)\times[-1/2,1/2]}p_{x,f}\, dx.
		\]
	\end{lemma}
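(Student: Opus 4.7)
The plan is to integrate against bounded continuous test functions $\phi$ on $[0, 1) \times [-1/2, 1/2]$ and reduce the identity of measures to Birkhoff's ergodic theorem. Since $\tilde T_f$ preserves $\textup{Leb}_{[0, 1)\times[-1/2,1/2]}$ (which, as already noted in the text, follows from Atkinson's recurrence theorem), the Birkhoff averages
\[
\phi^*(x) := \lim_{n \to \infty} \tfrac{1}{n}\sum_{i=0}^{n-1} \phi(\tilde T_f^i(x))
\]
exist for a.e.\ $x$, are measurable in $x$, and satisfy $\int \phi^* \, dx = \int \phi \, d\textup{Leb}_{[0, 1)\times[-1/2,1/2]}$.

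By the very definition of $p_{x,f}$ as the unique ergodic component for which $x$ is generic, one has $\int \phi \, dp_{x,f} = \phi^*(x)$ for a.e.\ $x$. Combining these two facts, I would obtain
\[
\int_{[0, 1)\times[-1/2,1/2]} \left(\int \phi \, dp_{x,f}\right) dx \;=\; \int \phi^*(x) \, dx \;=\; \int \phi \, d\textup{Leb}_{[0, 1)\times[-1/2,1/2]},
\]
which is precisely the claimed identity of measures once one verifies that $x \mapsto p_{x,f}$ is measurable into the space of Radon probability measures (so that the left-hand integral defines a bona-fide measure).

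The only technical step I anticipate is this measurability check. It follows from the same identification with a Birkhoff average: for $\phi$ ranging over a countable dense family of continuous functions on $[0, 1)\times[-1/2,1/2]$, the map $x \mapsto \int \phi \, dp_{x,f} = \phi^*(x)$ is measurable as an a.e.\ limit of measurable functions, and the weak-$*$ Borel structure on Radon probability measures is generated by evaluations against such test functions. No substantial obstacle is expected beyond this routine verification; the heart of the argument is simply that, via Birkhoff's theorem, $\phi \mapsto \int \phi^* \, dx$ is the identity on $L^1$ when the underlying measure is invariant, and the ergodic components $p_{x,f}$ compute precisely the averages $\phi^*(x)$.
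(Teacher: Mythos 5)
Your proof is correct, but it takes a different route from the paper's. The paper starts from the abstract Rokhlin ergodic decomposition $\textup{Leb}_{[0,1)\times[-1/2,1/2]}=\int p_{\bar x}\,d\rho_f(\bar x)$ over the space of invariant components, inserts the tautological identity $p_{\bar x}=\int p_{\bar x}\,dp_{\bar x}(x)$, identifies the fiber $\{x \mid p_{x,f}=p_{\bar x}\}$ with the component $\bar x$ up to $p_{\bar x}$-measure, and then reassembles the double integral into a single integral against Lebesgue --- essentially a Fubini-type change of variables through the disintegration. You instead bypass the quotient space entirely: you test both sides against bounded continuous $\phi$, use Birkhoff's theorem for the Lebesgue-preserving map $\tilde T_f$ to get $\int\phi^*\,dx=\int\phi\,d\textup{Leb}$, and use the definition of $p_{x,f}$ as the measure for which $x$ is generic to identify $\phi^*(x)=\int\phi\,dp_{x,f}$ a.e. (note that genericity is a single full-measure condition giving weak-$*$ convergence of the empirical measures, so this identity holds simultaneously for all $\phi$ on one full-measure set). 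Since two finite Borel measures on a separable metric space agreeing on all bounded continuous functions coincide, the identity follows. Your argument is arguably more self-contained and makes the measurability of $x\mapsto p_{x,f}$ (needed even to state the integral) explicit, whereas the paper defers that point to Proposition~\ref{prop: msrble}; the paper's version, on the other hand, makes transparent that the family $\{p_{x,f}\}$ is just a relabeling of the ergodic decomposition, which is how it is used afterwards. The only point worth tightening in your write-up is the phrase ``countable dense family of continuous functions'': since $[0,1)\times[-1/2,1/2]$ is not compact, $C_b$ is not separable, so you should instead invoke a countable convergence-determining class (e.g.\ bounded uniformly continuous functions), which suffices both for the measurability claim and for the final identification of the two measures.
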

\begin{proof}
	By \eqref{eq: erg_decomp} we have
	\[
	\begin{split}
	\textup{Leb}_{[0, 1)\times[-1/2,1/2]}&=\int_{[0, 1)\times[-1/2,1/2]/ \textup{Inv}(\tilde T_f)}p_{\bar x}\,d\rho_f(\bar x)\\&
	=\int_{[0, 1)\times[-1/2,1/2]/ \textup{Inv}(\tilde T_f)}\int_{[0, 1)\times[-1/2,1/2]}p_{\bar x}\,dp_{\bar x}(x)\,d\rho_f(\bar x)\\
	&=\int_{[0, 1)\times[-1/2,1/2]/ \textup{Inv}(\tilde T_f)}\int_{[x|p_{x,f}=p_{\bar x}]}p_{\bar x}\,dp_{\bar x}(x)\,d\rho_f(\bar x)
	\end{split}
	\]
	Since the set $[x|p_{x,f}=p_{\bar x}]$ coincides with the invariant 
	component $\bar x$ (up to the measure $p_{\bar x}$), we get
	\[
	\begin{split}
	\textup{Leb}_{[0, 1)\times[-1/2,1/2]}&=\int_{[0, 1)\times[-1/2,1/2]/ \textup{Inv}(\tilde T_f)}\int_{\bar x}p_{x,f}\,dp_{\bar x}(x)\,d\rho_f(\bar x)\\
	&=\int_{[0, 1)\times[-1/2,1/2]}p_{x,f}\,d\textup{Leb}(x).
	\end{split}
	\]
\end{proof}
\begin{remark}
	By the Birkhoff Ergodic Theorem, the decomposition in the above lemma is 
	trivial (i.e., every measure in the decomposition is Lebesgue) if and only if $\tilde T_f$ is ergodic, which, by recurrence, is 
	equivalent to the ergodicity of $T_f$.
\end{remark}
We need the following simple lemma for the properties of the set of point convergence.
\begin{lemma}\label{eq: convergentmes}
	Let $(f_n)_{n\in\N}$ be a sequence of continuous functions on a topological space $X$ to a complete metric space $(Y,d)$. Then the set 
	\[
	C:=\left\{x\in X\,\left|\, \lim_{n\to\infty} f_n(x)\text{ exists}\right\}\right.
	\]
	is Borel.
\end{lemma}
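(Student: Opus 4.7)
The plan is to use the completeness of $(Y,d)$ to express $C$ via the Cauchy criterion: a sequence in a complete metric space converges if and only if it is Cauchy. This rewriting will let us display $C$ as a countable combination of open sets.

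More precisely, I would write
\[
C = \bigcap_{k=1}^{\infty} \bigcup_{N=1}^{\infty} \bigcap_{m,n \geq N} \left\{ x \in X \,\left|\, d(f_m(x), f_n(x)) < \tfrac{1}{k} \right.\right\}.
\]
The inclusion $\subseteq$ is immediate: if $(f_n(x))$ converges, it is Cauchy, so for every $k$ there exists $N$ such that $d(f_m(x), f_n(x)) < \tfrac{1}{k}$ whenever $m,n \geq N$. Conversely, if $x$ lies in the right-hand side, then $(f_n(x))$ is a Cauchy sequence in $(Y,d)$, which converges by completeness.

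Next I would observe that for each fixed pair $m,n$, the map $x \mapsto d(f_m(x), f_n(x))$ is the composition of the continuous map $x \mapsto (f_m(x), f_n(x))$ with the continuous metric $d\colon Y \times Y \to \R$, and is hence continuous. Therefore the set $\{ x \in X \mid d(f_m(x), f_n(x)) < 1/k\}$ is open in $X$. Consequently, $C$ is a countable intersection of countable unions of countable intersections of open sets, and thus Borel.

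There is no serious obstacle here; the only thing to be careful about is that $X$ need not be metrizable, so I am using only that continuous preimages of open sets are open and that the Borel $\sigma$-algebra of $X$ is closed under countable unions and intersections. Both facts hold on any topological space, so the argument goes through as stated.
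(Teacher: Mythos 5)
Your proof is correct and follows essentially the same route as the paper: both use completeness to rewrite $C$ via the Cauchy criterion as a countable intersection/union/intersection of the sets $\{x \mid d(f_m(x),f_n(x)) < 1/k\}$, which are open by continuity. Your write-up is in fact slightly more careful than the paper's in spelling out why $x \mapsto d(f_m(x),f_n(x))$ is continuous and in noting that no metrizability of $X$ is needed.
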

\begin{proof}
	Since the space $(Y,d)$ is complete, we have
	\[
	C=\left\{x\in X\,\left|\, (f_n(x))_{n\in\N}\text{ is a Cauchy sequence}\right\}\right..
	\]
	Then 
	\[
	C=\bigcap_{m=1}^{\infty}\bigcup_{N=1}^{\infty}\bigcap_{k=N}^\infty\bigcap_{\ell=N}^\infty\{x\in X\,|\, |f_k(x)-f_l(x)|<1/m \},
	\]
	which, together with the assumption of continuity, finishes the proof. 
	
\end{proof}
We now turn into one of the main measurability results, which is going to be used to construct proper measure assignments. First, we introduce some auxiliary sets. Take 
\[
D:=\{(x,f)\in \left([0, 1)\times[-1/2,1/2]\right)\times W\ |\ S_nf(x)\neq \pm 
1/2, \text{ for any }n\in\Z\}
\]
and
\[
\begin{split}
	E&:=\left\{(x,f) \in\left([0, 1)\times[-1/2,1/2]\right)\times W\ \left|\ \begin{array}{l} T_f^k(x)\text{ 
		does not belong to the vertical line 
		given} \\ \text{by the discontinuity of }f\text{ or }T, \text{ for any }k\in\Z \end{array} \right\}\right.
\end{split}.
\]
Note that the complements of these sets are of zero measure. Indeed, the points that do not belong to one of those sets satisfy one of countably many linear equations. 

 In the following, we denote by $\mathcal M$ the space of probability measures on $[0, 1)\times[-1/2,1/2]$. 

	\begin{proposition}\label{prop: msrble}
		The assignment $(x,f)\mapsto p_{x,f}$, as in Lemma \ref{lem: decomp}, is measurable with respect 
		to the product topology in the domain and the weak-* topology in the 
		image.
	\end{proposition}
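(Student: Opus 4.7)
The plan is to exploit the fact that $[0,1)\times[-1/2,1/2]$ is a compact metric space, so the weak-$*$ topology on the space $\mathcal{M}$ of probability measures is metrizable and separable. Weak-$*$ convergence is then characterized by testing against a countable dense family $\{\phi_k\}_{k\in\N}\subseteq C([0,1)\times[-1/2,1/2])$, and so the measurability of $(x,f)\mapsto p_{x,f}$ will follow once we know that, for each $k$, the scalar map $(x,f)\mapsto \int\phi_k\, dp_{x,f}$ is measurable with respect to the product Borel structure on $\left([0,1)\times[-1/2,1/2]\right)\times W$.

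By the definition of $p_{x,f}$ (as the weak-$*$ limit of Birkhoff averages at generic points) together with Birkhoff's ergodic theorem applied to the decomposition of Lemma \ref{lem: decomp}, for a.e.\ $(x,f)\in D\cap E$ one has
\[
\int \phi_k\, dp_{x,f}=\lim_{n\to\infty}\frac{1}{n}\sum_{i=0}^{n-1}\phi_k(\tilde T_f^i(x)).
\]
The strategy is therefore to prove that each iterate $(x,f)\mapsto \tilde T_f^i(x)$ is Borel measurable on $D\cap E$; the partial Birkhoff averages will then be measurable, and the final step will be an adaptation of Lemma \ref{eq: convergentmes} to sequences of \emph{measurable} (rather than continuous) functions, which identifies the set of convergence as a Borel set and exhibits the limit as a Borel-measurable function on it.

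The key technical step is measurability of $\tilde T_f$. I would carry it out by a level-set analysis of the first return time
\[
N(x,f):=\min\{n\ge 1\mid T_f^n(x)\in [0,1)\times[-1/2,1/2]\},
\]
which is finite on $D$ by recurrence. Writing $x=(y,t)$, the level set $\{N=n\}$ is cut out by the finite conjunction of the strict inequalities $|t+S_j f(y)|>1/2$ for $1\le j<n$ with the condition $|t+S_n f(y)|\le 1/2$. Since each Birkhoff sum $S_j f(y)$ depends measurably on the pair $(x,f)$ (through the parametrization of $C_{m,M}$ by $P_{m,M}$), these level sets form a countable measurable partition of $D$, and on $\{N=n\}$ the map $\tilde T_f(x)=(T^n(y),\, t+S_n f(y))$ is visibly measurable in $(x,f)$. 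Iterates are then obtained by composition, using $E$ to ensure the orbit avoids the discontinuity loci of $T$ and of $f$.

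The main obstacle is precisely this interplay between the discontinuities of $T$ and $f$ and the jumps of the return time function $N$: this is exactly what the definitions of $D$ and $E$ are designed to neutralize. Once one restricts to the full-measure set $D\cap E$, the entire chain of manipulations above reduces to routine measurability statements about countable partitions, compositions, and pointwise limits. Putting them together yields measurability of $(x,f)\mapsto \int\phi_k\, dp_{x,f}$ for every $k$ and hence the proposition.
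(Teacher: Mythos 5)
Your proposal is correct, and it shares the paper's overall skeleton: identify $p_{x,f}$ with the weak-$*$ limit of the empirical measures $G_n(x,f)=\tfrac1n\sum_{i=0}^{n-1}\delta_{\tilde T_f^i x}$, establish measurability of each $G_n$ (equivalently of the partial Birkhoff averages against a countable dense family of test functions), and conclude via the Borel-ness of the set of pointwise convergence. Where you diverge is in the key technical step. The paper proves that $G_n$ is \emph{continuous} on the full-measure set $D\cap E$, via a quantitative argument: it chooses $\epsilon$ smaller than the distance of the finite orbit segment to all discontinuities of $T$ and $f$ and to the levels $\pm 1/2$ (this is exactly where $D$ and $E$ are used), so that nearby $(x_m,f_m)$ have identical return times up to stage $n$ and nearby empirical measures in the L\'evy--Prokhorov metric; it then applies its Lemma~\ref{eq: convergentmes} for continuous functions verbatim. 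You instead prove only \emph{joint measurability} of $(x,f)\mapsto\tilde T_f^i(x)$, by decomposing $D$ into the level sets $\{N=n\}$ of the return time, which are cut out by finitely many measurable inequalities in $t+S_jf(y)$, and noting that on each level set the return map has an explicit measurable formula. This is weaker than continuity but entirely sufficient, and it is arguably more robust: you do not actually need the set $E$ at all for measurability (it matters only for continuity), and the adaptation of Lemma~\ref{eq: convergentmes} to measurable $f_n$ is immediate since its proof only uses measurability of the sets $\{|f_k-f_\ell|<1/m\}$. The trade-off is that the paper's continuity of $G_n$ on $D\cap E$ is a slightly stronger statement that it can state and reuse cleanly, while your route avoids the delicate $\epsilon$-bookkeeping of \eqref{eq:defepsilon}. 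One shared cosmetic caveat: $[0,1)\times[-1/2,1/2]$ is not compact, so one should test against a countable family of bounded continuous functions (or pass to the circle); this affects neither argument.
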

\begin{proof}
Consider the function $G_n:\left([0, 1)\times[-1/2,1/2]\right)\times W\to 
\mathcal 
M$ given by the formula
\[
G_n(x,f):=\frac{1}{n}\sum_{i=0}^{n-1}\delta_{\tilde T_f x}.
\]
On $\left([0, 1)\times[-1/2,1/2]\right)\times W$ we consider the product of the
Lebesgue measures. We will show that $G_n$ is continuous on a full-measure set.


Fix $(x,f)\in D\cap E$ and let $(x_m,f_m)_{m \in \N}$ be a sequence in $D \cap E$ converging to $(x,f)$. Let $p_1,\ldots, p_{m}$ denote the discontinuities of $f$ and $s_1,\ldots, s_{d - 1}$ the discontinuities of $T$. We will prove that 
\begin{equation}\label{eq: contofmeas}
	\lim_{m\to\infty} G_n(x_m,f_m)=G_n(x,f).
	\end{equation}
Let $\epsilon>0$ such that
\begin{equation}\label{eq:defepsilon}
	\begin{split}
\epsilon<\frac{1}{2}\min\Bigg\{ & 
\min_{0 \leq k \leq N(x)}(|S_{k}f(x)-1/2|),\, 
\min_{0 \leq k \leq N(x)}(|S_{k}f(x)+1/2|),\\&\min_{\substack{0 \leq k \leq N(x)\\ 
	1 \leq \ell \leq m}} |T^k(x)-p_\ell|, \min_{\substack{0 \leq k \leq N(x) \\
	1 \leq j < d}} |T^k(x)-s_j|\Bigg\},
\end{split}
\end{equation}
where $N(x)$ is $(n-1)$-th return time of $x$ to $[0, 1)\times[-1/2,1/2]$ via 
$T_f$. 
Note that by definition of $D$ and $E$, such $\epsilon$ exists.
Take $M\in \N$ such that for every $m\ge M$ we have 
\[
\max_{0 \leq k \leq N(x)}\left\{|S_{k}f(x)-S_{k}f_m(x_m)|, |T^k(x)-T^k(x_m)| 
\right\}<\epsilon.
\]
Such $M$ exists due to \eqref{eq:defepsilon}.
In particular, the return times up to $N(x)$ of iterations of $T_f$ to 
$[0, 1)\times[-1/2,1/2]$ are identical as those of $x$. Hence, the 
distance of $\delta_{\tilde T_f ^i x}$ and $\delta_{\tilde T_f^i x_m}$ in 
Levy-Prokhorov metric is less than $\epsilon$ for every $m>M$. This finishes 
the proof of continuity of $G_n$ on $D\cap E$. 

It follows, by Lemma \ref{eq: convergentmes}, that the set $\Omega$ of points $(x,f)\in D\cap E$ such that $x$ is generic for some $T_f$-invariant measure is measurable. Since for any $f\in W$, the set of points $x$ which are generic for some $T_f$-invariant measure is of full measure, the set $\Omega$ is of full product measure

To conclude the proof of the proposition, it suffices to notice that $\lim_{n\to\infty} 
G_n(x,f)= p_{x,f}$ for every $(x,f)\in\Omega$. In other words, $(x,f)\mapsto p_{x,f}$ on $\Omega$ is a point-wise limit of continuous maps. Thus, it is well-defined and measurable on a full measure set.
\end{proof} 

The decomposition obtained in the above proposition is not done on the whole space but rather on a subset of $[0, 1)\times\R$. We now show how to obtain a measure on $[0,1) \times \R$, which is absolutely continuous with respect to the Lebesgue measure, using the decomposition above.

For any $f\in W$ and any $N \in \N$, let $T_{f, N}:[0, 1)\times[-N,N] \to [0, 1)\times[-N,N] $ be the first return map of $T_f$ to $[0, 1)\times [-N,N]$. Note that the map $\tilde T_f$ can be viewed as the first return map of $T_{f, N}$ to $[0, 1)\times[-1/2,1/2]$. Denote by $\mathcal M^N$ the set of finite Radon measures on $[0, 1)\times [-N,N]$. 

The following is a simple corollary of Proposition \ref{prop: msrble}. 
\begin{lemma}\label{lem: Nextension}
 There exists a measurable assignment 
 \[
 ([0, 1)\times[-N,N])\times W\ni (x,f)\mapsto p^N_{x,f}\in\mathcal M^N,
 \]
 where 
 \[
 \mu^f_N:=\int_{[0, 1)\times[-N,N]} p^N_{x,f}\, dx<\!\!<\textup{Leb}_{[0, 1)\times [-N,N]}
 \]
and the integrated measures are invariant and ergodic. 
Moreover, for every $N\ge 1$ we have
 \[
 p^N_{x,f}|_{[0, 1)\times [-1/2,1/2]}=p_{x,f}
 \]
 and if $N_1<N_2$, then
 \[
 p^{N_2}_{x,f}|_{[0, 1)\times [-N_1,N_1]}=p^{N_1}_{x,f}.
 \]
\end{lemma}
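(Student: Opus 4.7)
The plan is to extend the disintegration of $\textup{Leb}_{[0, 1)\times[-1/2,1/2]}$ provided by Lemma \ref{lem: decomp} into $\tilde{T}_f$-ergodic components to a disintegration of $\textup{Leb}_{[0, 1) \times [-N,N]}$ into $T_{f, N}$-ergodic components, using the standard Kakutani skyscraper (tower) construction. Concretely, since $\tilde{T}_f$ is the first return of $T_{f, N}$ to $B := [0, 1) \times [-1/2, 1/2]$, for any $(x, f)$ in the full-measure set from Proposition \ref{prop: msrble}, I set, for any Borel $A \subseteq [0, 1) \times [-N, N]$,
\[
p^N_{x, f}(A) := \int_{B} \sum_{k = 0}^{r_N(y, f) - 1} \chi_A(T_{f, N}^k(y))\, dp_{x, f}(y),
\]
where $r_N(\cdot, f)$ denotes the first return time to $B$ under $T_{f, N}$. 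The standard argument then shows $p^N_{x, f}$ is $T_{f, N}$-invariant, while the classical correspondence between invariant sets of an induced map and its base transfers $\tilde{T}_f$-ergodicity of $p_{x, f}$ to $T_{f, N}$-ergodicity of $p^N_{x, f}$.

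For $x \in ([0, 1) \times [-N, N]) \setminus B$, I extend the definition by setting $p^N_{x, f} := p^N_{T_{f, N}^{k(x, f)}(x),\, f}$, where $k(x, f) \geq 1$ is the first entry time of the $T_{f, N}$-orbit of $x$ into $B$; this is finite for a.e. $(x, f)$ by Atkinson's recurrence theorem. Since ergodic components are constant along orbits, this prescription is compatible with ergodicity and invariance on the larger space. The joint measurability of $(x, f) \mapsto p^N_{x, f}$ will follow by combining Proposition \ref{prop: msrble} with the measurability of $r_N(\cdot,\cdot)$, $k(\cdot, \cdot)$ and the iterates $T_{f, N}^k$ on the full-measure set $D \cap E$ of the excerpt, together with the fact that the tower formula is a Borel functional of its inputs.

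The identity $p^N_{x, f}|_{B} = p_{x, f}$ for $x \in B$ is immediate from the tower formula: when $A \subseteq B$, only the term $k = 0$ contributes since by definition of $r_N$ the iterates $T^k_{f, N}(y)$ for $1 \leq k < r_N(y, f)$ lie outside $B$; the formula collapses to $\int_B \chi_A(y)\, dp_{x, f}(y) = p_{x, f}(A)$. To establish $\mu^f_N \ll \textup{Leb}_{[0, 1) \times [-N, N]}$, I parametrize over ergodic components: writing $\textup{Leb}_{[0, 1) \times [-N, N]} = \int q^N_{\bar{x}}\, d\rho_f(\bar{x})$ for the $T_{f, N}$-ergodic decomposition, each $p^N_{x, f}$ is a rescaling of the component $q^N_{\bar{x}}$ of $x$, and any Lebesgue-null set is $q^N_{\bar{x}}$-null for $\rho_f$-a.e. $\bar{x}$, hence $\mu^f_N$-null.

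For the nested compatibility $p^{N_2}_{x, f}|_{[0, 1) \times [-N_1, N_1]} = p^{N_1}_{x, f}$ with $N_1 < N_2$, I use transitivity of the Kakutani construction: since $T_{f, N_1}$ is the first return of $T_{f, N_2}$ to $[0, 1) \times [-N_1, N_1]$ and $\tilde{T}_f$ is the first return of both to $B$, applying the tower formula in two stages yields the same measure as in a single stage, while the normalization on the innermost base $B$ (fixed by $p^{N_i}_{x, f}|_B = p_{x, f}$) forces the two extensions to agree. The main obstacle I anticipate is the joint measurability step, as both $r_N(\cdot, f)$ and $k(\cdot, f)$ depend on $f$ through the Birkhoff sums $S_n f$, and one must carefully remain inside the continuity set $D \cap E$ built in the proof of Proposition \ref{prop: msrble}.
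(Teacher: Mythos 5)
Your construction is correct and is essentially the paper's own: the tower formula you write is exactly the paper's $p^N_{x,f}=\sum_{m\ge 1}\sum_{j=0}^{m-1}(T_{f,N})^j_{*}\big(p_{x,f}|_{U_m}\big)$, where $U_m$ is the set of points of $[0,1)\times[-1/2,1/2]$ with first return time $m$ under $T_{f,N}$, and the invariance, ergodicity, restriction and nesting properties are obtained the same way. The only cosmetic difference is in the absolute continuity step: the paper deduces $\mu^f_N\ll\textup{Leb}_{[0,1)\times[-N,N]}$ directly from the fact that each summand is a pushforward of a restriction of Lebesgue under the Lebesgue-preserving map $T_{f,N}$, whereas you route through the ergodic decomposition of the strip; your phrasing there should be adjusted slightly, since by Remark \ref{rem: notnecLeb} the measure $p^N_{x,f}$ is a rescaling of the restriction of the component $q^N_{\bar x}$ to the tower over the base, not of $q^N_{\bar x}$ itself, though this does not affect the conclusion.
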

\begin{proof} 
Let $N\in\N$. For any $m\in\N$ denote
\[
U_m:=\left\{x\in[0, 1)\times[-1/2,1/2]\,\left|\, \text{$m$ is the first return time of $x$ via $T_{f, N}$ to $[0, 1)\times[-1/2,1/2]$}\right\}\right..
\]
Note that the first return map to $[0, 1)\times[-1/2,1/2]$ via $T_{f, N}$ is equal $\tilde T_f$. Consider the measure on $[0, 1)\times[-1/2,1/2]$ given by
\[
\mu^N_f:=\sum_{m=1}^\infty \sum_{j=0}^{m-1} (T_{f, N})_*\left(\textup{Leb}_{[0, 1)\times[-1/2,1/2]}|_{U_m}\right).
\]
Since each of the summands is absolutely continuous w.r.t. Lebesgue measure, then so is $\mu^N_f$.
By Lemma \ref{lem: decomp} we have
\[
\mu^N_f=\int_{[0, 1)\times[-1/2,1/2]}\sum_{m=1}^\infty \sum_{j=0}^{m-1} (T_{f, N})^j_*p_{x,f}|_{U_m}\,dx
\]
Take 
\[
p^N_{x,f}:=\sum_{m=1}^\infty \sum_{j=0}^{m-1} (T_{f, N})^j_*p_{x,f}|_{U_m}.
\]
Since the restriction of measures and taking images of measures are measurable operations, by Proposition \ref{prop: msrble}, we obtain the measurability of the assignment $(x,f)\mapsto p^N_{x,f}$. 
The invariance and ergodicity of $p^N_{x,f}$ follow directly from the invariance and ergodicity of $p_{x,f}$, while the last two equations follow from the construction.
\end{proof}
\begin{remark}\label{rem: notnecLeb}
 With the notation of the above proof, we actually have that $\mu_f^N$ is a restriction of the Lebesgue measure on the strip to the disjoint union $\bigsqcup_{m=1}^{\infty}\bigsqcup_{j=0}^{m-1}(T_{f, N})^j(U_m)$. This does \textbf{not} have to be the whole measure $Leb_{[0, 1)\times[-N,N]}$.
\end{remark}

In the following result, we pass from the decomposition on the bounded subsets to the decomposition on the whole strip. We will denote by $\mathcal M_{\infty}$ the space of Radon measures on $[0, 1)\times \R$.
\begin{proposition}
\label{prop: msbdecomp}
	There exists a measurable assignment 
	\[
	\left([0, 1)\times[-1/2,1/2]\right)\times W\ni (x,f)\mapsto \tilde p_{x,f}\in \mathcal M_{\infty},
	\]
 such that
	\begin{itemize}
		\item for a.e every $(x,f)\in \left([0, 1)\times[-1/2,1/2]\right)\times W$ with respect to the product Lebesgue measure, the measure $\tilde p_{x,f}$ is $T_f$-invariant and $\tilde p_{x,f}|_{[0, 1)\times[-1/2,1/2]}=p_{x,f}$,
		\item $\tilde\mu_f:=\int_{[0, 1)\times[-1/2,1/2]}\tilde p_{x,f}\, dx$ is well defined and $\tilde\mu_f<\!\!<\textup{Leb}_{[0, 1)\times \R}$.
	\end{itemize}
\end{proposition}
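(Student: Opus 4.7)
The plan is to build $\tilde p_{x,f}$ as a Kakutani-type skyscraper over $p_{x,f}$ and then identify it, on each strip $[0,1)\times[-N,N]$, with the measures $p^N_{x,f}$ constructed in Lemma \ref{lem: Nextension}. Let $V_m \subseteq [0,1)\times[-1/2,1/2]$ denote the set of points whose first return time to $[0,1)\times[-1/2,1/2]$ under $T_f$ equals $m$, and define
\[
\tilde p_{x,f} := \sum_{m=1}^{\infty} \sum_{j=0}^{m-1} (T_f^j)_*\big( p_{x,f}|_{V_m} \big).
\]
Since $\bigsqcup_m V_m = [0,1)\times[-1/2,1/2]$ up to a $p_{x,f}$-null set (by Atkinson recurrence) and the images $T_f^j(V_m)$ for $0 \leq j < m$ are pairwise disjoint, this is a well-defined Borel measure on $[0,1)\times\R$. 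The restriction $\tilde p_{x,f}|_{[0,1)\times[-1/2,1/2]} = p_{x,f}$ follows because only the $j=0$ term contributes on the base: by definition of the first-return time, $T_f^j(V_m)\cap([0,1)\times[-1/2,1/2])=\emptyset$ for $1\leq j < m$. Measurability of $(x,f)\mapsto \tilde p_{x,f}$ in $\mathcal{M}_\infty$ is inherited from Proposition \ref{prop: msrble}, since the partition $\{V_m\}_{m\geq 1}$ depends measurably on $f$ and pushforward by $T_f$ is a measurable operation.

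The main obstacle is to verify $T_f$-invariance of $\tilde p_{x,f}$, since $p_{x,f}$ is only invariant under $\tilde T_f$, not under $T_f$ itself. For a Borel set $A \subseteq [0,1)\times\R$, the index shift $j \mapsto j+1$ in the sum defining $\tilde p_{x,f}(T_f^{-1}(A))$ yields
\[
\tilde p_{x,f}(T_f^{-1}(A)) - \tilde p_{x,f}(A) = \sum_{m=1}^{\infty} \Big( p_{x,f}\big(V_m \cap T_f^{-m}(A)\big) - p_{x,f}\big(V_m \cap A\big) \Big).
\]
On $V_m$ one has $T_f^m = \tilde T_f$ by definition of $V_m$, so summing over $m$ and using $\bigsqcup_m V_m=[0,1)\times[-1/2,1/2]$ gives $\sum_m p_{x,f}(V_m \cap T_f^{-m}(A)) = p_{x,f}(\tilde T_f^{-1}(A\cap([0,1)\times[-1/2,1/2])))$, which equals $p_{x,f}(A\cap([0,1)\times[-1/2,1/2]))=p_{x,f}(A)$ by $\tilde T_f$-invariance of $p_{x,f}$ and the fact that $p_{x,f}$ is supported on the base; this also equals $\sum_m p_{x,f}(V_m \cap A)$, so the difference vanishes.

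It remains to show that $\tilde\mu_f = \int \tilde p_{x,f}\, dx$ is $\sigma$-finite and absolutely continuous with respect to $\textup{Leb}_{[0,1)\times\R}$. For any fixed $N \geq 1$, the $T_f$-orbit segments $\{y,T_f(y),\dots,T_f^{m-1}(y)\}$ appearing in the skyscraper formula for $y\in V_m$, when intersected with $[0,1)\times[-N,N]$, are precisely the $T_{f,N}$-orbit segments used in Lemma \ref{lem: Nextension} to build $p^N_{x,f}$ (indeed, $T_{f,N}$ is obtained from $T_f$ by skipping exactly the iterates that exit $[0,1)\times[-N,N]$, and one regroups the double sum accordingly by conditioning $V_m$ on the number of visits of its orbit pieces to $[0,1)\times[-N,N]$). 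Hence $\tilde p_{x,f}|_{[0,1)\times[-N,N]} = p^N_{x,f}$, and integrating over $x$ via Fubini (justified by the measurability of $(x,f) \mapsto p^N_{x,f}$) yields $\tilde\mu_f|_{[0,1)\times[-N,N]} = \mu^N_f$. Since $\mu^N_f \ll \textup{Leb}_{[0,1)\times[-N,N]}$ by Lemma \ref{lem: Nextension} for every $N$, letting $N \to \infty$ gives both the $\sigma$-finiteness of $\tilde\mu_f$ and $\tilde\mu_f \ll \textup{Leb}_{[0,1)\times\R}$, completing the proof.
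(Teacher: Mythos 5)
Your proof is correct and produces the same measure as the paper, but it is organized in the opposite direction. The paper takes the consistent family $p^N_{x,f}$ from Lemma \ref{lem: Nextension} as the primitive object, defines $\tilde p_{x,f}$ on a compact set $K$ by $\tilde p_{x,f}(K)=p^{N_K}_{x,f}(K)$ (the compatibility $p^{N_2}_{x,f}|_{[0, 1)\times[-N_1,N_1]}=p^{N_1}_{x,f}$ making this consistent), and then deduces $T_f$-invariance from the $T_{f,N}$-invariance of each $p^N_{x,f}$ applied to bounded sets $A$ with $A, T_f(A)\subseteq[0, 1)\times[-N,N]$. You instead write down the global Kakutani skyscraper over $T_f$ in a single formula, verify $T_f$-invariance directly by the index shift (reducing to the $\tilde T_f$-invariance of $p_{x,f}$), and only bring in the family $p^N_{x,f}$ at the end to certify local finiteness and the absolute continuity of $\tilde\mu_f$. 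Your route makes the invariance argument self-contained and arguably cleaner; the paper's route avoids any manipulation of infinite sums. Two small points to tighten: first, the index-shift identity should be carried out for bounded Borel sets $A$ only (so that $\tilde p_{x,f}(A)$ and $\tilde p_{x,f}(T_f^{-1}(A))$ are finite and the subtraction is legitimate), with invariance for general Borel sets following by monotone approximation; second, the fact that the sum defines a locally finite (hence Radon) measure, i.e.\ that $\tilde p_{x,f}$ actually lands in $\mathcal M_{\infty}$, is only available once you have identified $\tilde p_{x,f}|_{[0, 1)\times[-N,N]}$ with the finite measure $p^N_{x,f}$, so that claim should be deferred to that stage rather than asserted at the outset.
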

\begin{proof}
 For every $x\in [0, 1)\times[-1/2,1/2]$ and $f\in W$ consider the sequence of measures $(p_{x,f}^N)_{N \in \N}$ given by the Lemma \ref{lem: Nextension}, here seen as measures on $[0, 1)\times\R$. 
 Define the measure $\tilde p_{x,f}$ by putting for every compact subset $K\subseteq [0, 1)\times\R$
 \[
\tilde p_{x,f}(K)= p_{x,f}^{N_K}(K),\text{ where }N_K:=\min\{N\in \N\,|\,K\subseteq [0, 1)\times [-N,N]\}.
 \]
 Note that in view of Lemma \ref{lem: Nextension}, we can replace in the above definition $N_K$ by any $N$ bigger than $N_K$. Hence, the above equation really defines a measure.
 
 We claim that 
 \begin{equation}\label{eq: partconverg}
 \lim_{N\to\infty} p_{x,f}^N= \tilde p_{x,f}
 \end{equation}
 exists. Indeed, if $g$ is a continuous function on $[0, 1)\times\R$ with a compact support $K_g$, then there exists $N_g\in\N$, such that $K_g\subseteq [0, 1)\times[-N_g,N_g]$. Then, by Lemma \ref{lem: Nextension} we get 
\[
\lim_{N\to\infty}\int_{[0, 1)\times \R}g\,dp_{x,f}^N=\int_{[0, 1)\times [-N_g,N_g]}g\,dp_{x,f}^{N_g}=
\int_{[0, 1)\times [-N_g,N_g]}g\,d\tilde p_{x,f}=\int_{[0, 1)\times \R}g\,d\tilde p_{x,f}.
\]
{Note that the assignment $P(x,f):= \tilde p_{x,f}$ is measurable. Indeed, recall that the topology of $\mathcal M_{\infty}$ is generated by the sets of the form $C(\nu,f_1,\ldots,f_n,\epsilon_1,\ldots,\epsilon_n)=\{\rho\in\mathcal M_{\infty}\mid |\int f_i\,d\rho- \int f_i\,d\nu|<\epsilon_i\}$, where $n\in\N$, $\nu\in\mathcal M_{\infty}$, $\epsilon_1,\ldots,\epsilon_n>0$ and $f_1,\ldots,f_n\in C_c([0, 1)\times\R)$. Then, there exists $N\in\N$ such that the supports of functions $f_1,\ldots,f_n$ are included in $[0, 1)\times[-N,N]$. Thus
\[
\begin{split}
&\{(x,f)\in([0, 1)\times[-1/2,1/2])\times W\mid \left|\int f_i\,d\tilde p_{x,f}- \int f_i\,d\nu\right|<\epsilon_i\}=\\
&\{(x,f)\in([0, 1)\times[-1/2,1/2])\times W\mid \left|\int f_i\,d p_{x,f}^N- \int f_i\,d\nu\right|<\epsilon_i\}.
\end{split}
\]
Since the assignment $(x,f)\mapsto p_{x,f}^N$ is measurable, so is the above set. By the fact that all parameters $\nu$, $f_j$, and $\epsilon_j$ are arbitrary, it follows that $P$ is measurable.
}

In a similar fashion, if $A\subseteq [0, 1)\times\R $ is a bounded set, then since $f$ is bounded, there exists $N\in\N$ such that $A,T_f A\subseteq [0, 1)\times[-N,N]$. Since $p_{x,f}^N$ is $T_{f, N}$-invariant, then $\tilde p_{x,f}(A)=\tilde p_{x,f}(T_f(A))$. Hence, the measure $\tilde p_{x,f}$ is $T_f$ invariant.

 Finally, it remains to notice that by Lemma \ref{lem: Nextension}, we have for every $N\in\N$ that 
 \begin{equation}\label{eq: abscontLeb}
\mu_f^N=\int_{[0, 1)\times[-1/2,1/2]}p_{x,f}^N\, dx<\!\!<\int_{[0, 1)\times[-N,N]}p_{x,f}^N\, dx=\textup{Leb}_{[0, 1)\times[-N,N]}.
 \end{equation}
 Then by \eqref{eq: partconverg} we get that $\tilde\mu_f:=\lim_{N\to\infty} \mu_f^N$ is well defined, $\tilde\mu_f|_{[0, 1)\times[-N,N]}=\mu_f^N$ and by \eqref{eq: abscontLeb}, we have that $\tilde\mu_f<\!\!<\textup{Leb}_{[0, 1)\times \R}$.
\end{proof}
\begin{remark}
 It is worth mentioning that the measure $\tilde\mu_f$ obtained in the above proposition does not need to be the whole Lebesgue measure on the strip $[0, 1)\times\R$. Indeed if the function $f$ is a bounded coboundary then the support of $\tilde\mu_f$ is contained in the set $[0, 1)\times [-\|f\|_{\infty}-1,\|f\|_{\infty}+1]$ (see also Remark \ref{rem: notnecLeb}).
\end{remark}

 One can observe that until now, we have yet to use the assumption of the non-ergodicity of the transformations considered. Indeed, this assumption is only used in the following proof of the Proposition \ref{prop:measurable_dependence_measures}.

\begin{proof}[Proof of Proposition \ref{prop:measurable_dependence_measures}]
 Consider the assignment $(x,f)\mapsto \tilde p_{x,f}$ given by Proposition \ref{prop: msbdecomp} and denote it by $P:([0, 1)\times[-1/2,1/2])\times W\to \mathcal M_{\infty}$. Since it is a measurable map, by Lusin's Theorem, there exists a compact subset $K\subseteq ([0, 1)\times[-1/2,1/2])\times W$ of positive measure such that $P|_K$ is continuous. 
Let $(x_0,f_0)\in K$ be a point of density of $K$. Since $f_0\in W$, we have that $\tilde p_{x_0,f_0}|_{[0, 1)\times[-1/2,1/2]}\neq \textup{Leb}_{[0, 1)\times[-1/2,1/2]}$. In particular, if $d_{LP}$ denotes the Levy-Prokhorov metric on $\mathcal M$ (not $\mathcal M_{\infty}$!) there exists $\epsilon>0$ such that $\textup{Leb}_{[0, 1)\times[-1/2,1/2]}\notin B_{LP}(\tilde p_{x_0,f_0}|_{[0, 1)\times[-1/2,1/2]},\epsilon)$. 
 Since $P|_{K}$ is continuous, there exists $\delta>0$ such that for every $(x,f)\in K\cap D((x_0,f_0),\delta)=:\bar K$, we have $d_{LP}(\tilde p_{x_0,f_0}|_{[0, 1)\times[-1/2,1/2]}, \tilde p_{x,f}|_{[0, 1)\times[-1/2,1/2]})<\epsilon/2$. Since $(x_0,f_0)$ is a density point of $K$, we have $\textup{Leb}_{[0, 1)\times[-1/2,1/2]}(\bar K)>0$. 
 
 Let $V\subseteq W$ be the set of those elements $f$ for which sets $K_f:=\{x\in[0, 1)\times[-1/2,1/2]| (x,f)\in \bar K\}$ have positive measure. Since $\bar K$ is of positive measure, then so is $V$. Consider the map 
 \[
 V\ni f\mapsto \bar\mu_f:= \int_{K_f} \tilde p_{x,f} dx.
 \]
 Note that it is actually the integration of $P|_{\bar K}$, a continuous function, with respect to the $x$-coordinate. In particular, it is continuous. Let $\tilde\mu_f:=\frac{1}{Leb(K_f)}\bar\mu_f$. Note that $Leb(K_f)=\bar\mu_f([0, 1)\times[-1/2,1/2])$. Since the map $\mathcal M\ni \nu\mapsto \nu([0, 1)\times[-1/2,1/2])$ is measurable, then so is $V\ni f\mapsto \tilde\mu_f\in\mathcal M$.
 
 Moreover, since $\bar K\subseteq D((x_0,f_0),\delta)$, by the choice of $\delta$ we have that measure $\tilde\mu_f$ is a convex combination of measures from the set $B_{LP}(p_{x_0,f_0},\epsilon/2)$ and as such, $\mu_f\in B_{LP}(p_{x_0,f_0},\epsilon/2)$. In particular 
 \[
 \tilde\mu_f|_{[0, 1)\times[-1/2,1/2]}\neq \textup{Leb}_{[0, 1)\times[-1/2,1/2]},
 \]
 hence
 \[
 \tilde\mu_f\neq \textup{Leb}_{[0, 1)\times \R}.
 \]
 Moreover, 
 \[
 \tilde\mu_f=\frac{1}{Leb( K_f)}\int_{K_f} \tilde p_{x,f} \,dx<\!\!<\int_{[0, 1)\times[-1/2,1/2]} \tilde p_{x,f} \,dx=\tilde\mu_f<\!\!< \textup{Leb}_{[0, 1)\times\R},
 \]
 where $\hat \mu_f$ is given by Proposition \ref{prop: msbdecomp}.
 This finishes the proof.

\end{proof}

We will now show Lemma \ref{lem: contemb}, which in turn was used earlier to show that Theorem \ref{thm:corollary} follows from Theorem \ref{thm:main_thm}. First, we recall some classical facts from the general ergodic theory. 
Let $(X,\mathcal B,\nu)$ be a standard probability space. We denote by $\textup{Aut}(X)$ the space of all $\nu$-measure-preserving automorphisms on $X$. Let $(B_k)_{k \in \N}$ be a countable family of subsets generating $\mathcal B$. It is a classical fact (see, e.g., \cite{aaronson_introduction_1997}), that this space endowed with the metric
\begin{equation}\label{eq: defofAutmetric}
d_{\textup{Aut}(X)}(S_1,S_2)=\sum_{k \in \N}2^{-k}\left(\nu(S_1(B_k)\triangle S_2(B_k))+\nu(S_1^{-1}(B_k)\triangle S_2^{-1}(B_k))\right)
\end{equation}
is a Polish space. Let also $\textup{Erg}(X)\subseteq \textup{Aut}(X)$ be the subset of ergodic automorphisms. It is a classical fact that it is measurable (see, e.g., \cite{halmos_lectures_1960}).

We now show that by twitching both the IET and the cocycle, the first return maps to the cylinder in $[0, 1)\times \R$ obtained in this way are close in an appropriate space of automorphisms.
\begin{lemma}\label{lem: contemb}
 With the notation as in Lemma \ref{lem: Nextension}, the map 
 \[
 \left((S_0\times\Lambda^{\mathcal A}\right)\cap \textup{Erg}([0, 1)))\times C_{m,M}\ni (T,f)\mapsto \tilde T_{f, N}\in \textup{Aut}\left([0, 1)\times[-N,N],\mathcal B, \textup{Leb}_{[0, 1)\times [-N,N]}\right)
 \]
 is continuous for every $N\in\R_{>0}$.
\end{lemma}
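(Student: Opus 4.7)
The plan is to bound $d_{\textup{Aut}}(\tilde T_{f,N}, \tilde T'_{f',N})$ directly from its defining series \eqref{eq: defofAutmetric}. Given $\epsilon>0$, truncate the series at $K_0\in\N$ so the tail is below $\epsilon/2$; since any generating family is admissible, choose $(B_k)_{k\in\N}$ to be the dyadic rectangles in $[0,1)\times[-N,N]$. It then suffices to show that, for each $k\le K_0$, $\textup{Leb}\bigl(\tilde T_{f,N}(B_k)\triangle \tilde T'_{f',N}(B_k)\bigr)$ and its analogue for inverses tend to zero as $(T',f')\to(T,f)$.

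By Atkinson's recurrence theorem applied to the ergodic IET $T$, the first return time $r_{T,f}$ of $T_f$ to $[0,1)\times[-N,N]$ is a.e. finite, so I fix $M_0\in\N$ making $A:=\{r_{T,f}>M_0\}$ of Lebesgue measure below any prescribed threshold. Setting $L_n:=\{r_{T,f}=n\}$ for $1\le n\le M_0$, the return map coincides with $T_f^n(x,t)=(T^n(x),t+S_nf(x))$ on $L_n$; let $A'$ and $L_n'$ be defined analogously for $(T',f')$. The central step is to show that, for each fixed $n\le M_0$, $\textup{Leb}(L_n\triangle L_n')\to 0$ and $\textup{Leb}(A\triangle A')\to 0$ as $(T',f')\to(T,f)$. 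The set $L_n$ is defined by the finitely many conditions $t+S_jf(x)\notin[-N,N]$ for $1\le j<n$ together with $t+S_nf(x)\in[-N,N]$; because $f\in C_{m,M}$ is piecewise constant and each $T^i$ is a piecewise translation, every $S_jf$ is piecewise constant with finitely many discontinuities depending continuously on $(T,f)$ in the $l_\infty$ metric. Hence $L_n$ is a finite union of rectangles whose boundaries shift continuously with the parameters, and the desired measure-continuity follows.

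Combining this with the observation that on $L_n\cap L_n'$ the iterates $T_f^n$ and $(T'_{f'})^n$ agree up to a translation whose magnitude tends to zero with the parameter distance — whence $\textup{Leb}\bigl(T_f^n(B_k\cap L_n\cap L_n')\triangle (T'_{f'})^n(B_k\cap L_n\cap L_n')\bigr)\to 0$ by $L^1$-continuity of translation on indicator functions of rectangles — and decomposing $\tilde T_{f,N}(B_k)$ along the level sets $L_n$ plus the tail $A$, one arrives at an explicit bound for $\textup{Leb}\bigl(\tilde T_{f,N}(B_k)\triangle \tilde T'_{f',N}(B_k)\bigr)$ that is made arbitrarily small by the smallness of $\textup{Leb}(A)$, $\textup{Leb}(A')$, $\textup{Leb}(L_n\triangle L_n')$, and the image symmetric differences. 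The inverse case is handled by a symmetric argument with $T_f^{-1}$ and backward first-hitting times. The main obstacle is the uniform measure-continuity of the level sets $L_n$: because these sets have boundaries given by finitely many piecewise-linear conditions in the parameters, one must carefully verify that no small perturbation of $(T,f)$ concentrates the boundary on a set of appreciable Lebesgue measure — a standard but delicate bookkeeping exercise for the iterates $T^i$ with $i\le M_0$ and the discontinuity structure of $f$.
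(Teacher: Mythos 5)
Your proposal is correct and follows essentially the same strategy as the paper's proof: truncate the defining series of the $\textup{Aut}$ metric, discard a small-measure set where the return time to the cylinder exceeds a fixed bound, and on the remaining finite union of rectangles (where the return times of the perturbed and unperturbed maps coincide and both maps act as translations depending continuously on the parameters) bound the symmetric differences directly. The only cosmetic difference is that you organize the good set globally by return-time level sets $L_n$, whereas the paper carves out a finite union of rectangles $\hat B$ inside each generating set $B_i$; the underlying estimates are the same.
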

\begin{proof}
We will prove the lemma for $N=1/2$; for other cases, the proof differs only by proper rescaling of measures. Let $(B_k)_{k \in \N}$ be a family of rectangles generating the Borel $\sigma$-algebra on $[0, 1)\times[-N,N]$. We consider the metric $d:=d_{\textup{Aut}([0, 1)\times[-N,N])}$ as in \eqref{eq: defofAutmetric}.

 Fix $\pi\in S_0$ and let $(T_n,f_n)_{n \in \N}$ be a sequence converging to $(T,f)$ in $\{\pi\}\times\Lambda^{\mathcal A}\times C_{m,M}$ with respect to the product metric (on $\Lambda^{\mathcal A}$ we 
 consider standard Euclidean metric), with $T_n$ and $T$ being ergodic. Denote by $T_{n,f_n}$ the skew product given by $(T_n,f_n)$. Note that due to the ergodicity assumption, the first return transformations under consideration are all well-defined due to recurrence. Let $\varepsilon>0$ and let $K$ be such that 
 \[\sum_{i=K+1}^{\infty}2^{-i+1}\textup{Leb}_{[0, 1)\times [-1/2,1/2]}(B_i)<\varepsilon/2.\]
 It is enough to show that there exists $L\in\N$ such that for every $n\ge L$, we have 
 \[
 \sum_{i=1}^{K}2^{-i}\left(\textup{Leb}_{[0, 1)\times [-1/2,1/2]}(\tilde T_{n,f_n}(B_i)\triangle \tilde T_f(B_i))+\textup{Leb}_{[0, 1)\times [-1/2,1/2]}(\tilde T_{n,f_n}^{-1}(B_i)\triangle \tilde T_f^{-1}(B_i)\right)<\varepsilon/2.
 \]
 On the other hand, to show the above inequality, it is enough to find $L_i$ for every $i\in\{1,\ldots,K\}$ such that for every $n\ge L_i$
 \[
 \textup{Leb}_{[0, 1)\times [-1/2,1/2]}\left(\tilde T_{n,f_n}(B_i)\triangle \tilde T_f(B_i)\right)+ \textup{Leb}_{[0, 1)\times [-1/2,1/2]}\left(\tilde T_{n,f_n}^{-1}(B_i)\triangle \tilde T_f^{-1}(B_i)\right)<\varepsilon/2K
 \]
 and take $L:=\max_{i=1,\ldots,K} L_i$.

{ Fix $i\in\{1,\ldots,K\}$ and let $B:=B_i\cap [0, 1)\times [-1/2+\varepsilon/16K,1/2-\varepsilon/16K]$. By choosing $L_i$ sufficiently large, we can define a subset $\hat B\subseteq B$ with $\textup{Leb}(B\setminus\hat B)< \varepsilon/16K$ consisting of finitely many rectangles such that for any $n\ge L_i$, the restriction of the first return maps to $[0, 1)\times [-1/2,1/2]$ of $T_{n,f_n}$ and $T_f$ (resp. $T_{n,f_n}^{-1}$ and $T_f^{-1}$) to each of these rectangles are continuous and their return times are identical. From this, increasing $L_i$ further if necessary, we may assume that 
 \[
 \textup{Leb}_{[0, 1)\times [-1/2,1/2]}\left(\tilde T_{n,f_n}(\hat B)\triangle \tilde T_f(\hat B)\right)+\textup{Leb}_{[0, 1)\times [-1/2,1/2]}\left(\tilde T_{n,f_n}^{-1}(\hat B)\triangle \tilde T_f^{-1}(\hat B)\right)<\varepsilon/8K.
 \]
 Hence
 \[
 \begin{split}
 &\textup{Leb}_{[0, 1)\times [-1/2,1/2]}\left(\tilde T_{n,f_n}(B_i)\triangle \tilde T_f(B_i)\right)+\textup{Leb}_{[0, 1)\times [-1/2,1/2]}\left(\tilde T_{n,f_n}^{-1}(B_i)\triangle \tilde T_f^{-1}(B_i)\right)\\
 &\le\ \textup{Leb}_{[0, 1)\times [-1/2,1/2]}\left(\tilde T_{n,f_n}(B)\triangle \tilde T_f(B)\right)+\textup{Leb}_{[0, 1)\times [-1/2,1/2]}\left(\tilde T_{n,f_n}^{-1}(B)\triangle \tilde T_f^{-1}(B)\right)+\varepsilon/4K\\
 &\le\ \left(\tilde T_{n,f_n}(\hat B)\triangle \tilde T_f(\hat B)\right)+\textup{Leb}_{[0, 1)\times [-1/2,1/2]}\left(\tilde T_{n,f_n}^{-1}(\hat B)\triangle \tilde T_f^{-1}(\hat B)\right)+3\varepsilon/8K\le \varepsilon/2K,
 \end{split}
 \]
 which finishes the proof.}
\end{proof}

\bibliographystyle{acm}
\bibliography{Bibliography.bib}
\end{document}